\documentclass{article}

\usepackage{microtype}
\usepackage{graphicx}
\usepackage[tight]{subfigure}
\usepackage{booktabs} 
\usepackage{amsmath,amsthm,amssymb,amsfonts}
\usepackage{mathtools}
\usepackage{thmtools,thm-restate}

\usepackage[extdef=true]{delimset}
\usepackage{xspace}

\usepackage[round]{natbib}
\usepackage{algorithm}
\usepackage{algorithmic}
\usepackage{authblk}

\usepackage{enumitem}
\usepackage[margin=1in]{geometry}

\usepackage[colorlinks=true,allcolors=blue]{hyperref}

\usepackage[capitalize]{cleveref}
\usepackage{crossreftools}

\declaretheorem[parent=]{theorem}
\declaretheorem[sibling=theorem]{lemma}
\declaretheorem[sibling=theorem]{corollary}

\DeclareMathOperator*{\argmin}{arg\,min}

\allowdisplaybreaks

\graphicspath{ {images/} }


\bibliographystyle{plainnat}


\newcommand{\alglinelabel}{%
  \addtocounter{ALC@line}{-1}
  \refstepcounter{ALC@line}
  \label
}
\newcommand{\supplementaryTitle}{%
\section*{Appendix}
}
\newcommand{\supplementaryMaterial}{appendix\xspace}

\crefname{ALC@line}{line}{lines}




\begin{document}

\title{Asynchronous Stochastic Optimization \\ Robust to Arbitrary Delays}

\author[1]{Alon Cohen}
\author[1,2]{Amit Daniely}
\author[1]{Yoel Drori}
\author[1,3]{Tomer Koren}
\author[1]{Mariano Schain}
\affil[1]{Google Research Tel Aviv}
\affil[2]{Hebrew University of Jerusalem}
\affil[3]{Blavatnik School of Computer Science, Tel Aviv University}

\maketitle

\begin{abstract}
  We consider stochastic optimization with delayed gradients where, at each time step~$t$, the algorithm makes an update using a stale stochastic gradient from step $t - d_t$ for some arbitrary delay $d_t$.
  This setting abstracts asynchronous distributed optimization where a central server receives gradient updates computed by worker machines. These machines can experience computation and communication loads that might vary significantly over time.
  In the general non-convex smooth optimization setting, we give a simple and efficient algorithm that requires $O( \sigma^2/\epsilon^4 + \tau/\epsilon^2 )$ steps for finding an $\epsilon$-stationary point $x$, where $\tau$ is the \emph{average} delay $\smash{\frac{1}{T}\sum_{t=1}^T d_t}$ and $\sigma^2$ is the variance of the stochastic gradients.
  This improves over previous work, which showed that stochastic gradient decent achieves the same rate but with respect to the \emph{maximal} delay $\max_{t} d_t$, that can be significantly larger than the average delay especially in heterogeneous distributed systems.
  Our experiments demonstrate the efficacy and robustness of our algorithm in cases where the delay distribution is skewed or heavy-tailed.
\end{abstract}

\newcommand{\E}{\mathbb{E}}
\newcommand{\ind}{I}
\newcommand{\reals}{{\mathbb{R}}}
\newcommand{\dotp}{\boldsymbol{\cdot}}
\newcommand{\ifrac}[2]{#1/#2}
\newcommand{\ipfrac}[2]{(#1)/(#2)}

\newcommand{\algo}{Picky SGD\xspace}
\newcommand{\simgenworker}{Generate Async Sequence - Worker\xspace}
\newcommand{\simgenmaster}{Generate Async Sequence\xspace}
\newcommand{\simalg}{Simulate Async\xspace}
\newcommand{\ac}[1]{\textcolor{red}{\bf \{AC: #1\}}}
\newcommand{\tk}[1]{\textcolor{magenta}{\bf \{TK: #1\}}}
\newcommand{\mrs}[1]{\textcolor{blue}{\bf \{MS: #1\}}}
\newcommand{\yd}[1]{\textcolor{cyan}{\bf \{YD: #1\}}}

\section{Introduction}

Gradient-based iterative optimization methods are widely used in large-scale
machine learning applications as they are extremely simple to implement and use,
and come with mild computational requirements. 
On the other hand, in their standard formulation they are also inherently serial and synchronous due to their iterative nature. 
For example, in stochastic gradient descent (SGD), each step involves an update
of the form $x_{t+1} = x_t - \eta g_t$ where $x_t$ is the current iterate, and
$g_t$ is a (stochastic) gradient vector evaluated at $x_t$. To progress to the
next step of the method, the subsequent iterate $x_{t+1}$ has to be fully
determined by the end of step $t$ as it is required for future gradient queries.
Evidently, this scheme has to wait for the computation of the gradient $g_t$ to
complete (this is often the most computationally intensive part in SGD) before
it can evaluate $x_{t+1}$.

In modern large scale machine learning applications, a direct serial
implementation of gradient methods like SGD is overly costly, and parallelizing
the optimization process over several cores or machines is desired.
Perhaps the most common parallelization approach is via \emph{mini-batching},
where computation of stochastic gradients is distributed across several worker
machines that send updates to a parameter server. The parameter server is responsible for
accruing the individual updates into a single averaged gradient, and
consequently, updating the optimization parameters using this gradient.

While mini-batching is well understood
theoretically~\citep[e.g.,][]{lan2012optimal,dekel2012optimal,cotter2011better,duchi2012randomized},
it is still fundamentally synchronous in nature and its performance is adversely
determined by the slowest worker machine: the parameter server must wait for all
updates from all workers to arrive before it can update the model it maintains.
This could cause serious performance issues in heterogeneous distributed
networks, where worker machines may be subject to unpredictable loads that vary
significantly between workers (due to different hardware, communication
bandwidth, etc.) and over time (due to varying users load, power outages, etc.).

An alternative approach that has recently gained popularity is to employ
\emph{asynchronous} gradient
updates~\citep[e.g.,][]{nedic2001distributed,agarwal2012distributed,chaturapruek2015asynchronous,lian2015asynchronous,feyzmahdavian2016asynchronous};
namely, each worker machine computes gradients independently of the other
machines, possibly on different iterates, and sends updates to the parameter
server in an asynchronous fashion. This implies the parameter server might be
making \emph{stale updates} based on \emph{delayed} gradients taken at earlier,
out-of-date iterates. While these methods often work well in practice, they have
proven to be much more intricate and challenging to analyze theoretically than
synchronous gradient methods, and overall our understanding
of asynchronous updates remains lacking.


Recently, \citet{arjevani2020tight} and subsequently \citet{stich2020error} have
made significant progress in analyzing delayed asynchronous gradient methods.
They have shown that in stochastic optimization, delays only affect a
lower-order term in the convergence bounds.
In other words, if the delays are not too large, the convergence rate of SGD may not be affected by the delays. 
(\citealp{arjevani2020tight} first proved this for quadratic objectives; \citealp{stich2020error} then proved a more general result for smooth functions.)
More concretely, \citet{stich2020error} showed that SGD with a sufficiently attenuated step size to account for the delays attains an iteration complexity bound of the form  
\begin{align} \label{eq:stich}
    O\brk3{\frac{\sigma^2}{\epsilon^4} + \frac{\tau_{\max}}{\epsilon^2}}
\end{align}
for finding an $\epsilon$-stationary point of a possibly non-convex smooth
objective function (namely, a point at which the gradient is of norm $\leq
\epsilon$). Here $\sigma^2$ is the variance of the noise in the stochastic
gradients, and $\tau_{\max}$ is the \emph{maximal} possible delay, which
is also needed to be known a-priori for properly tuning the SGD step size.
Up to the $\tau_{\max}$ factor in the second term, this bound is identical
to standard iteration bounds for stochastic non-convex SGD without
delays~\citep{ghadimi2013stochastic}.

While the bound in \cref{eq:stich} is a significant improvement over previous
art, it is still lacking in one important aspect: the dependence on the maximal
delay could be excessively large in truly asynchronous environments, making the second term in the bound the dominant term.
For example, in heterogeneous or massively distributed networks, the maximal delay is effectively determined by the single slowest (or less reliable) worker machine---which is precisely the issue with synchronous methods we set to address in the first place. 
Moreover, as \citet{stich2020error} show, the step size used to achieve the bound in \cref{eq:stich} could be as much as $\tau_{\max}$-times smaller than that of without delays, which could severely
impact performance in practice.

\subsection{Contribution}

We propose a new algorithm for stochastic optimization with asynchronous delayed updates, we call ``\algo,''
that is significantly more robust than SGD, especially when the (empirical)
distribution of delays is skewed or heavy-tailed and thus the maximal
delay could be very large.
For general smooth possibly non-convex objectives, our algorithm achieves a convergence bound of the form
$$
    O\brk3{\frac{\sigma^2}{\epsilon^4} + \frac{\tau_\mathrm{avg}}{\epsilon^2}},
$$
where now $\tau_\mathrm{avg}$ is the \emph{average} delay in retrospect. 
This is a significant improvement over the bound in \cref{eq:stich} whenever
$\tau_\mathrm{avg} \ll \tau_\mathrm{max}$, which is indeed the case with
heavy-tailed delay distributions. 
Moreover, \algo is very efficient, extremely simple to implement, and does \emph{not} require to know the average delay $\tau_\mathrm{avg}$ ahead of time for optimal tuning. 
In fact, the algorithm only relies on a single additional hyper-parameter beyond the step-size.
 
Notably, and in contrast to SGD as analyzed in previous work~\citep{stich2020error}, our
algorithm is able to employ a significantly larger effective step size,
and thus one could expect it to perform well in practice compared to SGD.
Indeed, we show in experiments that \algo is able to converge quickly on
large image classification tasks with a relatively high learning rate, even when
very large delays are introduced. In contrast, in the same setting, SGD needs to
be configured with a substantially reduced step size to be able to converge at all,
consequently performing poorly compared to our algorithm.

Finally, we also address the case where $f$ is smooth and convex, in which we give a close variant of our algorithm with an iteration complexity bound of the form
$$
    O\brk3{\frac{\sigma^2}{\epsilon^2} + \frac{\tau_\mathrm{avg}}{\epsilon}}
$$
for obtaining a point $x$ with $f(x)-f(x^*) \leq \epsilon$ (where $x^*$ is a minimizer of $f$ over $\reals^d$).
Here as well, our rate matches precisely the one obtained by the state-of-the-art~\citep{stich2020error}, but with the dependence on the maximal delay being replaced with the average delay.
For consistency of presentation, we defer details on the convex case to \cref{sec:convex} and focus here on our algorithm for non-convex optimization.



Concurrently to this work,~\citet{pmlr-v139-aviv21a} derived similar bounds that depend on the average delay. Compared to our contribution, their results are adaptive to the smoothness and noise parameters, but on the other hand, are restricted to convex functions and their algorithms are more elaborate and their implementation is more involved.

\subsection{Additional related work}

For general background on distributed asynchronous optimization and basic asymptotic convergence results, we refer to the classic book by~\citet{bertsekas1997parallel}.
%
%
Since the influential work of \citet{niu2011hogwild}, there has been 
significant interest in asynchronous algorithms in a related model where there
is a delay in updating individual \emph{parameters} in a shared parameter vector
(e.g., \citep{reddi2015variance,mania2017perturbed,zhou2018simple,leblond2018improved}). This is of course very
different from our model, where steps use the full gradient vector in atomic, yet delayed, updates.


Also related to our study is the literature on Local SGD (e.g., \citealp{woodworth2020local} and references therein), which is a distributed gradient method that perform several local (serial) gradient update steps before communicating with the parameter server or with other machines. Local SGD methods have become popular recently since they are used extensively in Federated Learning~\citep{mcmahan2017communication}.
%
%
We note that the theoretical study in this line of work is mostly concerned with analyzing \emph{existing} 
distributed variants of SGD used in practice, whereas we aim to develop and
analyze \emph{new} algorithmic tools to help with mitigating the effect of stale
gradients in asynchronous optimization.


A related yet orthogonal issue in distribution optimization, which we do not address here, is reducing the communication load between the workers and servers. One approach that was recently studied extensively is doing this by compressing gradient updates before they are transmitted over the network. We refer to \cite{alistarh2017qsgd,karimireddy2019error,stich2020error} for further discussion and references.



\section{Setup and Basic Definitions} \label{sec:prelims}

\subsection{Stochastic non-convex smooth optimization}

We consider stochastic optimization of a $\beta$-smooth (not necessarily convex) non-negative function $f$ defined over the $d$-dimensional Euclidean space $\reals^d$.
A function $f$ is said to be $\beta$-smooth if it is differentiable and its gradient operator is $\beta$-Lipschitz, that is, if $\norm{\nabla f(x) - \nabla f(y)} \leq \beta \norm{x-y}$ for all $x,y \in \reals^d$. This in particular implies (e.g., \citep{nesterov2003introductory}) that for all $x,y \in \reals^d$, 
\begin{align} \label{eq:smoothness}
    f(y)
    \leq
    f(x) + \nabla f(x) \dotp (y-x) + \frac{\beta}{2} \norm{y-x}^2
    .
\end{align}
We assume a stochastic first-order oracle access to $f$; namely, $f$ is endowed
with a stochastic gradient oracle that given a point $x \in \reals^d$ returns a
random vector $\tilde g(x)$, independent of all past randomization, such that
$\E[\tilde g(x) \mid x] = \nabla f(x)$ and $\E[\norm{\tilde g(x) - \nabla
f(x)}^2 \mid x] \leq \sigma^2$ for some variance bound $\sigma^2 \geq 0$.
In this setting, our goal is to find an $\epsilon$-stationary point of~$f$, namely, a point $x\in \reals^d$ such that $\| \nabla f(x)\| \le \epsilon$, with as few samples of stochastic gradients as possible.

\subsection{Asynchronous delay model}

We consider an abstract setting where stochastic gradients (namely, outputs for invocations of the stochastic first-order oracle)
are received asynchronously and are subject to arbitrary delays.
The asynchronous model can be abstracted as follows.
We assume that at each step $t$ of the optimization, the algorithm obtains a
pair $(x_{t-d_t},g_t)$ where $g_t$ is a stochastic gradient at $x_{t-d_t}$ with
variance bounded by $\sigma^2$; namely, $g_t$ is a random vector such that $\E_t
g_t = \nabla f(x_{t-d_t})$ and $\E_t\norm{g_t-\nabla f(x_{t-d_t})}^2 \le
\sigma^2$ for some delay $0\le d_t < t$. Here and throughout, $\E_t[\cdot]$
denotes the expectation conditioned on all randomness drawn before step $t$.
After processing the received gradient update, the algorithm may query a new
stochastic gradient at whatever point it chooses (the result of this query will
be received with a delay, as above).

Few remarks are in order:

\begin{itemize}
\item 
We stress that the delays $d_1,d_2,\ldots$ are entirely arbitrary, possibly
chosen by an adversary; in particular, we do \emph{not} assume they are sampled
from a fixed stationary distribution. Nevertheless, we assume that the
delays are independent of the randomness of the stochastic gradients (and of the
internal randomness of the optimization algorithm, if any).%
\footnote{One can thus think of the sequence of delays as being fixed ahead of
time by an oblivious adversary.}
\item 
For simplicity, we assumed above that a stochastic gradient is received at every round $t$. This is almost without loss of generality:%
\footnote{We may, in principle, allow to query the stochastic gradient oracle even on rounds where no feedback is received, however this would be redundant in most reasonable instantiations of this model~(e.g., in a parameter server architecture).}
if at some round no feedback is observed, we may simply skip the round without affecting the rest of the optimization process (up to a re-indexing of the remaining rounds).
%
\item
Similarly, we will also assume that only a single gradient is obtained in each
step; the scenario that multiple gradients arrive at the same step (as in
mini-batched methods) can be simulated by several subsequent iterations in each
of which a single gradient is processed.
\end{itemize}

\section{The \algo Algorithm}

We are now ready to present our asynchronous stochastic optimization algorithm, which we call \algo; see pseudo-code in \cref{alg:sgd-with-delays}.
The algorithm is essentially a variant of stochastic gradient descent, parameterized by a learning rate $\eta$ as well as a target accuracy $\epsilon$.

\begin{algorithm}[ht]
    \caption{\algo} \label{alg:sgd-with-delays}
    \begin{algorithmic}[1]
        \STATE {\bf input}: 
            learning rate $\eta$, 
            target accuracy $\epsilon$.
        \FOR{$t=1,\ldots,T$}
            \STATE {\bf receive} delayed stochastic gradient $g_t$ and point $x_{t-d_t}$ such that $\E_t[g_t] = \nabla f(x_{t-d_t})$. \alglinelabel{ln:receive-grad}
            \IF{$\norm{x_t - x_{t-d_t}} \le \ifrac{\epsilon}{(2 \beta)}$}\alglinelabel{ln:compare-dist} \alglinelabel{ln:test-update}
                \STATE {\bf update:} $x_{t+1} = x_t - \eta g_t$.
            \ELSE 
                \STATE {\bf pass:} $x_{t+1} = x_t$.
            \ENDIF
        \ENDFOR
    \end{algorithmic}
\end{algorithm}

\algo maintains a sequence of iterates $x_1,\ldots,x_T$.
At step $t$, the algorithm receives a delayed stochastic gradient $g_t$ that was computed at an earlier iterate $x_{t-d_t}$ (\cref{ln:receive-grad}). 
Then, in \cref{ln:test-update}, the algorithm tests whether $\norm{x_t - x_{t-d_t}} \le \ifrac{\epsilon}{2 \beta}$. 
Intuitively, this aims to verify whether the delayed (expected) gradient $\nabla f(x_{t-d_t})$ is ``similar'' to the gradient $\nabla f(x_t)$ at the current iterate $x_t$;  due to the smoothness of $f$, we expect that if $x_{t-d_t}$ is close to $x_t$, then also the corresponding gradients will be similar. If this condition holds true, the algorithm takes a gradient step using $g_t$ with step size $\eta$.

Our main theoretical result is the following guarantee on the success of the algorithm.

\begin{theorem} \label{thm:main}
    Suppose that \cref{alg:sgd-with-delays} is initialized at $x_1 \in \reals^d$ with $f(x_1) \leq F$ and ran with
    \[
        T 
        \geq 
        500 \beta F \brk3{\frac{\sigma^2}{\epsilon^4} + \frac{\tau+1}{\epsilon^2}}
        ,\quad
        \eta 
        = 
        \frac{1}{4 \beta} \min \brk[c]3{1, \frac{\epsilon^2}{\sigma^2}}
        ,
    \]
    where $\tau$ be the average delay, i.e., $\tau = (\ifrac{1}{T}) \sum_{t=1}^T d_t$.
    Then, with probability at least $\frac12$, there is some $1 \leq t \leq T$ for which $\norm{\nabla f(x_t)} \le \epsilon$.
\end{theorem}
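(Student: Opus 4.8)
The plan is to split the run into \emph{update} steps (those that pass the test on \cref{ln:test-update}) and \emph{pass} steps, show that every update step taken at a non-stationary iterate makes definite expected progress, and argue that not too many pass steps can occur before a stationary point is hit. Throughout write $\theta = \ifrac{\epsilon}{(2\beta)}$ for the threshold, $G_t = \norm{\nabla f(x_t)}$, and let $I_t$ be the indicator that step $t$ is an update step; note that $I_t$ is determined by the randomness before step $t$, since both $x_t$ and the delayed point $x_{t-d_t}$ are. First I would prove a descent lemma for update steps. On such a step the test guarantees $\norm{x_t - x_{t-d_t}} \le \theta$, so $\beta$-smoothness gives $\norm{\nabla f(x_{t-d_t}) - \nabla f(x_t)} \le \beta\theta = \epsilon/2$, i.e.\ the stale gradient is within $\epsilon/2$ of the true gradient at $x_t$. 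Substituting $x_{t+1} = x_t - \eta g_t$ into \cref{eq:smoothness}, taking $\E_t$, and using $\E_t g_t = \nabla f(x_{t-d_t})$ and $\E_t\norm{g_t}^2 \le \norm{\nabla f(x_{t-d_t})}^2 + \sigma^2$, the $\epsilon/2$-closeness lets me replace $\nabla f(x_{t-d_t})$ by $\nabla f(x_t)$ up to controlled error. A short computation using the tuned $\eta = \frac{1}{4\beta}\min\{1,\epsilon^2/\sigma^2\}$ (chosen precisely to balance $\beta\eta^2\sigma^2$ against $\eta\epsilon^2$) then yields, whenever $G_t > \epsilon$,
\begin{equation*}
\E_t[f(x_{t+1})] \le f(x_t) - c\,\eta\epsilon^2, \qquad c = \tfrac{3}{32},
\end{equation*}
while pass steps leave $f$ unchanged.

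Next I would convert this into a bound on the number of useful updates via a stopping time. Let $\rho$ be the first step with $G_t \le \epsilon$, so the theorem is exactly the claim $\Pr[\rho \le T] \ge \tfrac12$, and consider the stopped process $f(x_{t\wedge\rho})$. Since $\{\rho > t\}$ is determined before step $t$, the descent lemma applies on every update step preceding $\rho$; telescoping and using $f \ge 0$ and $f(x_1) \le F$ gives $\E\sum_{t<\rho} I_t \le F/(c\,\eta\epsilon^2) = O\!\big(\beta F(\sigma^2/\epsilon^4 + 1/\epsilon^2)\big)$. Writing $U_{<\rho}, P_{<\rho}$ for the numbers of update and pass steps (among the first $T$) that precede $\rho$, the deterministic inequality $T\,\mathbf{1}[\rho>T] \le U_{<\rho} + P_{<\rho}$ together with Markov's inequality gives $T\,\Pr[\rho>T] \le \E[U_{<\rho}] + \E[P_{<\rho}]$; it therefore suffices that these two expectations sum to at most $T/2$, for which the large constant $500$ leaves ample slack once the pass count is controlled.

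The crux is bounding $\E[P_{<\rho}]$, and this is where the \emph{average} delay must appear. A pass at $t$ means $\norm{x_t - x_{t-d_t}} > \theta$, and since movement occurs only on update steps, $x_t - x_{t-d_t} = -\eta \sum_{s \in [t-d_t,\,t)} I_s g_s$; summing squared displacements over pass steps bounds their number by $\eta^2\theta^{-2}\,\E\sum_t \norm{\sum_{s \in [t-d_t,t)} I_s g_s}^2$. I would split $g_s = \nabla f(x_{s-d_s}) + \xi_s$ into signal and noise. For the noise the martingale structure kills all cross terms, so $\E\norm{\sum_s I_s \xi_s}^2 = \sum_s I_s\,\E\norm{\xi_s}^2$ with \emph{no} extra window-length factor: each step is counted once per window containing it, and the total number of (window, step) incidences equals $\sum_t d_t = T\tau$, so the noise contributes $O(\eta^2\sigma^2 T\tau)$, which with the tuned $\eta$ is $O(\beta F\tau/\epsilon^2)$ --- exactly the target average-delay term.

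The hard part is the signal term $\eta\sum_{s\in[t-d_t,t)} I_s \nabla f(x_{s-d_s})$, where there is no orthogonality to exploit, and I expect this to be the main obstacle. A naive Cauchy--Schwarz reintroduces a window-length and hence maximal-delay factor, so one must instead use self-regulation: a large cumulative expected-gradient displacement forces correspondingly large progress in $f$, whose total is budgeted by $F$. Concretely, the descent lemma bounds the ``signal path length'' $\eta\sum_{s<\rho} I_s \norm{\nabla f(x_{s-d_s})}$ by $O(F/\epsilon)$ --- each unit of gradient norm at a non-stationary iterate buys $\Omega(\eta\epsilon)$ decrease --- and one charges the signal-dominated passes against this bounded path, weighted by how long each unit of gradient mass lingers inside delayed windows, which again aggregates to $\sum_t d_t$. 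Making this charging rigorous, i.e.\ controlling the correlation between where gradient mass is large and where the (oblivious but possibly skewed) delayed windows overlap, is the step I expect to require the most care.
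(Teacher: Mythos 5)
Your architecture tracks the paper's quite closely (stale-gradient descent lemma, stopping at the first $\epsilon$-stationary iterate, charging update steps to the budget $F$, and making the average delay enter through window--step incidences), but the proposal has a genuine quantitative gap, and it is not where you think it is. Your noise bound is wrong: bounding the incidence count by $\sum_t d_t = T\tau$ with $\E[I_s]\le 1$ gives a pass-count contribution of $\theta^{-2}\eta^2\sigma^2 T\tau$, which for $\theta=\epsilon/(2\beta)$ and the tuned $\eta=\epsilon^2/(4\beta\sigma^2)$ equals $\frac{\epsilon^2\tau}{4\sigma^2}\,T$. This is not $O(\beta F\tau/\epsilon^2)$ (no factor of $F$ can appear in this term), and it is below $T/4$ only when $\tau \lesssim \sigma^2/\epsilon^2$ --- i.e.\ precisely outside the regime where the average-delay term dominates the bound. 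Since the coefficient of $T$ does not shrink as $T$ grows, taking $T$ larger never repairs the deficit. To get a delay-proportional bound you must instead weight each incidence by $\E[I_s]$ and invoke your update budget $\E\sum_s I_s \le F/(c\eta\epsilon^2)$; but that requires a uniform cap on the number of windows covering a fixed step $s$, which arbitrary delays do not provide (one early step can lie in nearly every window). So your noise term suffers from exactly the same covering/correlation problem that you correctly identify as the obstacle in the signal term.

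The missing ingredient, which is the paper's key move, is to discard the steps with $d_t > 2\tau$: by Markov's inequality there are at most $T/2$ of them, and every surviving window has length at most $2\tau$, hence covers each fixed step at most $2\tau$ times. With this cap, both your noise and signal incidences can be charged to the progress budget, and the $\beta F \tau/\epsilon^2$ term emerges. In fact, the paper then sidesteps pass-counting entirely: it proves a first-moment staleness bound (\cref{lem:staleness-bound}) controlling $\E\|x_t - x_{t-d_t}\|$ by the window progress $\sum_{i=t-d_t}^{t-1}\E\Delta_i$, and establishes a dichotomy (\cref{corr:descent-guarantee}): either that window progress exceeds $\epsilon^2/(125\beta)$, or $\E\|x_t-x_{t-d_t}\|\le \epsilon/(8\beta)$, in which case Markov's inequality makes the distance test pass with probability $\ge 3/4$, and intersecting with the failure event (probability $\ge 1/2$) forces an update with $\E\Delta_t \ge \epsilon^4/(64\sigma^2\beta)$. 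Summing $\E\Delta_t + \frac{1}{2\tau}\sum_{i=t-d_t}^{t-1}\E\Delta_i$ over the $\ge T/2$ steps with $d_t\le 2\tau$ and using the $2\tau$ cover bound then yields $F \ge \frac{T}{500\beta}\min\{\epsilon^2/\tau,\, \epsilon^4/\sigma^2\}$ on the failure event (\cref{lem:improvements-tradeoff}), which is the theorem in the high-noise case; the low-noise case $\sigma\le\epsilon$ is handled separately by a purely combinatorial count of updates (\cref{lem:update-lower-bound}). Your plan becomes viable once you graft in the $d_t\le 2\tau$ restriction and replace the heuristic charging of the signal term by this dichotomy, but as written both halves of your pass bound are open.
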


Observe that the optimal step size in \cref{thm:main} is independent of the average delay $\tau$. This is important for two main reasons: (i) implementing
the algorithm does not require knowledge about future, yet-to-be-seen delays; and
(ii) even with very large delays, the algorithm can maintain a high effective step size.

We note that the guarantee of \cref{thm:main} is slightly different from typical bounds in non-convex optimization (e.g., the bounds appearing in the previous work \cite{karimireddy2019error}): our result claims about the \emph{minimal} gradient norm of any iterate rather than the \emph{average} gradient norm over the iterates. 
Arguably, this difference does not represent a very strong limitation: the significance of convergence bounds in non-convex optimization is, in fact, in that they ensure that one of the iterates along the trajectory of the algorithm is indeed an approximate critical point, and the type of bound we establish is indeed sufficient to ensure exactly that.

We further note that while the theorem above only guarantees a constant success probability, it is not hard to amplify this probability to an arbitrary $1-\delta$ simply by restarting the algorithm $O(\log(1/\delta))$ times (with independent stochastic gradients); with high probability, one of the repetitions will be successful and run through a point with gradient norm~$\leq \epsilon$, which would imply the guarantee in the theorem with probability at least $1-\delta$.

\section{Analysis}

In this section we analyze \cref{alg:sgd-with-delays} and prove our main result.
Throughout, we denote $x'_t = x_{t-d_t}$ and let $N_t$ denote the noise vector at step $t$, namely $N_t = g_t - \nabla f(x'_t)$. Note that $\E\brk[s]{N_t \mid x_t,x_t'} = 0$ and $\E\brk[s]{\norm{N_t}^2 \mid x_t,x_t'} \leq \sigma^2$, since the iterates $x_t,x_t'$ are conditionally independent of the noise in $g_t$ as this gradient is obtained by the algorithm only at step $t$, after $x_t,x_t'$ were determined.

To prove~\cref{thm:main}, we will analyze a variant of the algorithm that will
stop making updates once it finds a point with $\norm{\nabla f(x)} \le
\epsilon$ (and eventually {\em fails} otherwise). That is, if $\norm{x_t-x'_t} > \ifrac{\epsilon}{2 \beta}$ or
$\norm{\nabla f(x_t)} \le \epsilon$ then $x_{t+1} = x_t$. Else, $x_{t+1} = x_t -
\eta g_t$. This variant is impossible to implement (since it needs to compute
the exact gradient at each step), but the guarantee of \cref{thm:main} is valid
for this variant if and only if it is valid for the original algorithm: one
encounters an $\epsilon$-stationary point if and only if the other does so.


First, we prove a simple technical lemma guaranteeing that whenever the algorithm takes a step, a large gradient norm implies a large decrease in function value. It is a variant of the classical ``descent lemma,'' adapted to the case where the gradient step is taken with respect to a gradient computed at a nearby point.

\begin{lemma} \label{lem:loacl_improvement}
    Fix $x,x'\in \reals^d$ with $\norm{x-x'} \le \ifrac{\epsilon}{2 \beta}$ and $\norm{\nabla f(x') } > \epsilon$. 
    Let $N\in \reals^d$ be a random vector with $\E\brk[s]{N \mid x,x'} = 0$ and $\E\brk[s]{\norm{N}^2 \mid x,x'} \le \sigma^2$.
    Then,
    \begin{align*}
        \E\brk[s]{f \brk{x - \eta \brk{\nabla f(x') + N}}} - \E f(x)
        \le 
        -\frac{\eta}{2} \E\norm{\nabla f(x')}^2
        + 
        \frac{\eta^2 \beta}{2} \brk{\sigma^2 + \E\norm{\nabla f(x')}^2}
        .
    \end{align*}
    In particular, for our choice of $\eta$, we have
    \begin{align} \label{eq:descent}
        \frac{\eta}{4} \E\norm{\nabla f(x')}^2 
        \le
        \E f(x) - \E\brk[s]{f \brk1{x - \eta \brk{\nabla f(x') + N}}}
        .
    \end{align}
\end{lemma}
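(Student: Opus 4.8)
The plan is to invoke the smoothness inequality~\cref{eq:smoothness} for the single update $y = x - \eta\brk{\nabla f(x') + N}$ and then rewrite the resulting linear term---which naturally involves $\nabla f(x)$---in terms of $\nabla f(x')$, the gradient actually used by the step. Substituting $y - x = -\eta\brk{\nabla f(x') + N}$ into~\cref{eq:smoothness} gives
\begin{align*}
    f(y)
    \le
    f(x) - \eta\, \nabla f(x) \dotp \brk{\nabla f(x') + N}
    + \frac{\eta^2 \beta}{2} \norm{\nabla f(x') + N}^2
    .
\end{align*}
Taking expectation over the randomness of $N$ and using $\E N = 0$ annihilates the noise contribution to the linear term, while $\E N = 0$ together with $\E\norm{N}^2 \le \sigma^2$ yields $\E\norm{\nabla f(x') + N}^2 \le \norm{\nabla f(x')}^2 + \sigma^2$, so the quadratic term is already in the desired form.

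The crux is the surviving deterministic inner product $-\eta\, \nabla f(x) \dotp \nabla f(x')$. Here I would split $\nabla f(x) = \nabla f(x') + \brk{\nabla f(x) - \nabla f(x')}$, turning the inner product into $\norm{\nabla f(x')}^2 + \brk{\nabla f(x) - \nabla f(x')} \dotp \nabla f(x')$. Cauchy--Schwarz and the $\beta$-Lipschitz gradient bound then give $\brk{\nabla f(x) - \nabla f(x')} \dotp \nabla f(x') \ge -\beta\norm{x-x'}\,\norm{\nabla f(x')} \ge -\tfrac{\epsilon}{2}\norm{\nabla f(x')}$, using $\norm{x-x'} \le \ifrac{\epsilon}{(2\beta)}$. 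Since $\norm{\nabla f(x')} > \epsilon$ by hypothesis, this error is strictly below $\tfrac12\norm{\nabla f(x')}^2$, whence $\nabla f(x) \dotp \nabla f(x') \ge \tfrac12 \norm{\nabla f(x')}^2$. Plugging this lower bound and the quadratic bound back in produces the first displayed inequality of the lemma.

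For the ``in particular'' claim I would substitute the stated step size $\eta = \tfrac{1}{4\beta}\min\brk[c]{1, \ifrac{\epsilon^2}{\sigma^2}}$. The two $\norm{\nabla f(x')}^2$ contributions combine to $-\tfrac{\eta}{2}\brk{1 - \eta\beta}\norm{\nabla f(x')}^2$, and since $\eta\beta \le \tfrac14$ we have $1 - \eta\beta \ge \tfrac34$, bounding this by $-\tfrac{3\eta}{8}\norm{\nabla f(x')}^2$. It then suffices to absorb the noise term into an eighth of the gradient term, i.e.\ to check $\tfrac{\eta^2\beta}{2}\sigma^2 \le \tfrac{\eta}{8}\norm{\nabla f(x')}^2$. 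Writing $\tfrac{\eta^2\beta}{2}\sigma^2 = \tfrac{\eta}{2}\brk{\eta\beta}\sigma^2$ and splitting on the two branches of the minimum---$\eta\beta = \tfrac14$ with $\sigma^2 \le \epsilon^2$, or $\eta\beta = \tfrac14\cdot\ifrac{\epsilon^2}{\sigma^2}$---both cases give $\tfrac{\eta^2\beta}{2}\sigma^2 \le \tfrac{\eta\epsilon^2}{8} < \tfrac{\eta}{8}\norm{\nabla f(x')}^2$, the last step again using $\norm{\nabla f(x')} > \epsilon$. Combining the two bounds leaves $-\tfrac{\eta}{4}\norm{\nabla f(x')}^2$, which rearranges to~\cref{eq:descent}.

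The main obstacle is precisely the mismatch between the point $x$ at which the descent lemma naturally measures the gradient and the stale point $x'$ at which the step is actually taken. Resolving it relies on using smoothness in two ways---as the quadratic upper bound on $f$, and as a Lipschitz bound on $\nabla f(x) - \nabla f(x')$---in concert with the two guard conditions $\norm{x-x'} \le \ifrac{\epsilon}{(2\beta)}$ and $\norm{\nabla f(x')} > \epsilon$, which are exactly calibrated so that the cross-term error $\tfrac{\epsilon}{2}\norm{\nabla f(x')}$ is dominated by half of $\norm{\nabla f(x')}^2$. Once this is in place, the remainder is routine accounting across the two branches of the step size.
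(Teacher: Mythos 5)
Your proposal is correct and follows essentially the same route as the paper's proof: the same application of the smoothness inequality, the same decomposition of the cross term $\nabla f(x)\dotp\nabla f(x')$ via $\nabla f(x) - \nabla f(x')$, Cauchy--Schwarz with the $\beta$-Lipschitz gradient, and the same calibration of the two guard conditions to absorb the error into half the gradient norm squared. Your handling of the step-size accounting (combining the gradient terms via $1-\eta\beta \ge 3/4$ and separately bounding $\tfrac{\eta^2\beta}{2}\sigma^2 \le \tfrac{\eta\epsilon^2}{8}$) is just a slight reorganization of the paper's two-case analysis and yields the identical constant.
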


\begin{proof}
    Using the smoothness of $f$ (\cref{eq:smoothness}), we have
    \begin{align*}
        f(x - \eta \brk{\nabla f(x') + N}) - f(x)
        \leq
        -\eta \nabla f(x) \dotp \brk{\nabla f(x') + N} 
        + 
        \tfrac12 \eta^2\beta \norm{\nabla f(x') + N}^2
        .
    \end{align*}
    Taking expectation over $N$ conditioned on $x,x'$, we get
    \begin{align*}
        &\mkern-36mu
        \E\brk[s]{f(x - \eta \brk{\nabla f(x') + N}) - f(x) \mid x,x'}
        \\
        &\leq
        -\eta \nabla f(x) \dotp \nabla f(x')   
        + 
        \tfrac12 \eta^2\beta \brk{\|\nabla f(x')\|^2 + \sigma^2}
        \\
        &=
        -\eta \nabla f(x')   \dotp \nabla f(x')  
        -
        \eta \nabla f(x') \dotp (\nabla f(x) - \nabla f(x'))
        + 
        \tfrac12 \eta^2\beta \brk{\norm{\nabla f(x')}^2 + \sigma^2}
        \\
        &\leq
        -\eta \|\nabla f(x')\|^2 
        +  
        \eta \beta \|\nabla f(x')\| \|x-x'\|
        + 
        \tfrac12 \eta^2\beta \brk{\|\nabla f(x')\|^2 + \sigma^2}
        \\
        &= 
        \eta \brk{\beta \|\nabla f(x')\| \|x-x'\|  
        - \|\nabla f(x')\|^2}
        + 
        \tfrac12 \eta^2\beta \brk{\|\nabla f(x')\|^2 + \sigma^2}
        .
    \end{align*}
    Since $\epsilon \le \| \nabla f(x')\| $ then 
    $$
        \|x-x'\| \leq \frac{\epsilon}{2\beta} 
        \leq 
        \frac{1}{2\beta} \norm{\nabla f(x')},
    $$ 
    and we have
    \begin{align*}
        \E\brk[s]!{f(x - \eta \brk{\nabla f(x') + N}) - f(x) \mid x,x'}
        \le 
        -\frac{\eta}{2} \|\nabla f(x')\|^2
        + 
        \tfrac12 \eta^2\beta \brk{\sigma^2 + \|\nabla f(x')\|^2}
        .
    \end{align*}
    If $\epsilon \ge  \sigma$ then $\sigma^2 \le \norm{\nabla f(x')}^2$. This, with $\eta = \ifrac{1}{4\beta}$, yields \cref{eq:descent}.
    If $\epsilon < \sigma$ and $\eta = \ifrac{\epsilon^2}{4 \sigma^2 \beta}$, then $\eta^2 \le \ifrac{\epsilon^2}{16\sigma^2 \beta^2}$. Plugging that in instead, using $\norm{\nabla f(x')} \ge \epsilon$, and taking expectations (with respect to $x,x'$) gets us \cref{eq:descent}.
\end{proof}

We next introduce a bit of additional notation. 
We denote by $\ind_t$ the indicator of event that the algorithm performed an update at time $t$. 
Namely, 
$$
    I_t 
    = 
    \ind\set!{ \| x_t-x'_t \| \le \ifrac{\epsilon}{2\beta} ~\;\text{and}\; \|\nabla f(x_t)\| > \epsilon}
    .
$$
Note that $\ind_t=1$ implies that $\|\nabla f(x_s)\| \ge \epsilon$ for all $s =
1,\ldots,t$. Further, we denote by $\Delta_t = f(x_{t}) - f(x_{t+1})$ the
improvement at time $t$. Since $f$ is non-negative and $f(x_1)\le F$, we have
that for all $t$, $$ \sum_{i=1}^t \Delta_i = f(x_1) - f(x_{t+1}) 
    \leq 
    F
    .
$$
Note that by \cref{lem:loacl_improvement} we have that $\E\Delta_t \ge 0$. 
The rest of the proof is split into two cases: $\sigma\le \epsilon$, and $\sigma \geq \epsilon$.

\subsection{Case (i): \texorpdfstring{$\boldsymbol{\sigma\le \epsilon}$}{}}

This regime is intuitively the ``low noise'' regime in which the standard deviation of the gradient noise, $\sigma$, is smaller than the desired accuracy $\epsilon$. We prove the following.

\begin{lemma} \label{lem:no-noise}
    Suppose that $\sigma\le \epsilon$ and the algorithm fails with probability $\geq \tfrac12$. 
    Then $T \le \ifrac{128 \beta F (\tau +1)}{\epsilon^2}$.
\end{lemma}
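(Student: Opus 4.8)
The plan is to count separately the steps on which \cref{alg:sgd-with-delays} updates and the steps on which it passes, and to show that on the ``failure'' event these two counts are tied together through the average delay. Write $K=\sum_{t=1}^T \ind_t$ for the number of updates. Recall that $\ind_t=1$ forces the (stopped) variant to still be active, so that every visited iterate up to time $t$---and in particular the stale point $x_t'=x_{t-d_t}$, since $t-d_t\le t$---has gradient norm exceeding $\epsilon$. I may therefore apply \cref{lem:loacl_improvement} at each update with $x=x_t$ and $x'=x_t'$, whose hypothesis $\norm{\nabla f(x_t')}>\epsilon$ is met, and read off from \cref{eq:descent} that whenever $\ind_t=1$ the expected per-step decrease is at least $\tfrac{\eta}{4}\norm{\nabla f(x_t')}^2\ge\tfrac{\eta}{4}\epsilon^2$. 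Summing over $t$, using $\sum_t\Delta_t\le F$ together with $\eta=1/(4\beta)$ (the value of the step size when $\sigma\le\epsilon$), yields a bound of the form $\E K\le 16\beta F/\epsilon^2$.

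Next I would lower bound $K$ on the failure event using the delays. On failure every non-update step is a ``distance pass'' with $\norm{x_t-x_t'}>\epsilon/(2\beta)$, and between two consecutive updates the iterate is constant. The key observation is that if $t$ is the $j$-th consecutive pass following an update at time $s$ (so that $x_{s+1}=\dots=x_t$), then $x_{t-d_t}$ cannot lie inside this constant block---otherwise $\norm{x_t-x_t'}=0$, contradicting the pass condition---hence $t-d_t\le s$ and therefore $d_t\ge t-s=j$. Consequently a block of $B_i$ consecutive passes contributes at least $1+2+\dots+B_i\ge B_i^2/2$ to $\sum_t d_t$. Summing over the (at most $K$) blocks, using $\sum_i B_i=T-K$ and Cauchy--Schwarz in the form $\sum_i B_i^2\ge (T-K)^2/K$, and recalling $\sum_{t=1}^T d_t=\tau T$, I obtain on the failure event the inequality $(T-K)^2\le 2\tau K T$.

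Finally I would rearrange this into $K\ge T/\brk{8(\tau+1)}$: if $K$ were smaller, then $T-K>\tfrac78 T$ while $2\tau K T<\tfrac14 T^2$, contradicting $(T-K)^2\le 2\tau K T$. Since this holds pathwise on the failure event, which by assumption has probability at least $\tfrac12$, taking expectations gives $\E K\ge \tfrac12\cdot T/\brk{8(\tau+1)}$, and combining with $\E K\le 16\beta F/\epsilon^2$ from the first step produces $T\le O\brk{\beta F(\tau+1)/\epsilon^2}$; tracking the constants carefully (and using the slightly sharper $B_i(B_i+1)/2$) sharpens this to the claimed $128\beta F(\tau+1)/\epsilon^2$. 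The main obstacle---and the crux of the whole argument---is the middle step: the descent and telescoping estimates are routine, but the fact that a \emph{run} of consecutive passes forces \emph{linearly growing} delays, giving the quadratic lower bound $\sum_{\text{block}} d_t\ge B_i^2/2$, is exactly what converts a bound in terms of the maximal delay into one in terms of the average delay, and is where the improvement over \cref{eq:stich} originates.
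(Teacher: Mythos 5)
Your proof is correct and is essentially the paper's own argument: the observation you call the crux---that each pass in a run following an update at time $s$ must have $d_t \ge t-s$, so that long runs of passes are incompatible with a small average delay---is exactly the content of \cref{lem:update-lower-bound}, and your descent/telescoping step is the paper's use of \cref{eq:descent} with $\eta = 1/(4\beta)$. The only difference is the final accounting (the paper truncates delays at level $2\tau$ and uses that at most $T/2$ steps can have $d_t \ge 2\tau$, while you sum the arithmetic series within each block and apply Cauchy--Schwarz to get $(T-K)^2 \le 2\tau K T$); both routes give $K = \Omega\bigl(T/(\tau+1)\bigr)$ pathwise on the failure event, and your constants do close---in fact the sharper $B_i(B_i+1)/2$ accounting yields $2\tau K \ge T-K$, i.e.\ $K \ge T/(2\tau+1)$, hence $T \le 64\beta F(\tau+1)/\epsilon^2$, comfortably within the claimed $128\beta F(\tau+1)/\epsilon^2$.
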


To prove the lemma above, we first show that the algorithm must make a significant number of updates, as shown by the following lemma.

\begin{lemma} \label{lem:update-lower-bound}
    If the algorithm fails, then the number of updates that it makes is at least $T/4(\tau + 1)$.
\end{lemma}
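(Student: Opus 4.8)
The plan is to exploit that, on any realization where the stopping variant fails, each of the $T$ steps is either an update or a \emph{pass}, and a pass at step $t$ necessarily has $\norm{x_t - x'_t} > \epsilon/(2\beta) > 0$ --- the alternative reason not to update, $\norm{\nabla f(x_t)} \le \epsilon$, is excluded by failure. Consequently the horizon splits into consecutive blocks, each being one update followed by a (possibly empty) run of passes up to the next update. Let $U = \sum_{t=1}^T \ind_t$ be the number of updates and $P = T - U$ the number of passes. Since $d_1 = 0$ forces $x'_1 = x_1$ and hence an update at $t=1$, there are no passes before the first update; assigning to each update the run of passes that immediately follows it, there are exactly $U$ blocks. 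The goal $U \ge T/(4(\tau+1))$ will follow from controlling $P$ by $U$ together with the delay budget $\sum_{t=1}^T d_t = T\tau$.

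The combinatorial core is a per-block lower bound on the delays. Fix a block whose anchoring update is at step $s$; throughout the block the iterate is frozen, so $x_{s+1} = x_{s+2} = \cdots = x_t$ for every step $t$ in the block. For a pass at such a $t$, the defining inequality $x_t \ne x'_t = x_{t-d_t}$ forces the delayed index to lie strictly before the frozen window: were $t - d_t \ge s+1$, we would have $x_{t-d_t} = x_t$, a contradiction. Hence $t - d_t \le s$, i.e.\ $d_t \ge t - s$. As $t$ ranges over the $n_i$ passes of block $i$, the quantities $t - s$ take the values $1, 2, \ldots, n_i$, so the sum of the delays over these passes is at least $1 + 2 + \cdots + n_i \ge n_i^2/2$.

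It remains to aggregate. The passes of distinct blocks are disjoint and the delays are nonnegative, so summing the previous estimate over the $U$ blocks gives $\tfrac12 \sum_i n_i^2 \le \sum_{t=1}^T d_t = T\tau$. Since $\sum_i n_i = P$ over $U$ blocks, Cauchy--Schwarz yields $\sum_i n_i^2 \ge (\sum_i n_i)^2/U = P^2/U$, and therefore $P^2 \le 2UT\tau$. Substituting $P = T - U$ and expanding, $T^2 - 2TU + U^2 \le 2UT\tau$, whence $T^2 \le 2TU(1+\tau) - U^2 \le 2TU(1+\tau)$; dividing by $2T(1+\tau)$ gives $U \ge T/(2(\tau+1))$, which is in fact stronger than the claimed bound.

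I expect the crux to be the per-block inequality $d_t \ge t - s$: the point is to invoke only the \emph{necessity} direction (a pass forces the delayed point out of the frozen window), so that the estimate holds on every failing realization irrespective of whether any particular update happened to be trivial (e.g.\ a zero stochastic gradient leaving the iterate unchanged). Once this is established, the summation, the Cauchy--Schwarz step, and the final quadratic rearrangement are all routine.
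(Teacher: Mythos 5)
Your proof is correct, and it in fact establishes the stronger bound $U \ge T/(2(\tau+1))$. The combinatorial core coincides with the paper's: both arguments decompose the failing run into blocks consisting of an update followed by a run of passes, use that the iterate is frozen inside a block, and deduce that a pass at offset $m$ past the anchoring update must have delay at least $m$ (the paper writes the frozen iterate as $x_{t_i}$ rather than $x_{t_i+1}$ and claims $d_{s_n} > n$; your insistence on invoking only the necessity direction, so that a trivial update with zero gradient causes no trouble, is the more careful rendering of the same fact). Where you genuinely depart is in the aggregation. The paper thresholds: each block of length $\ell$ contains at least $\ell - 1 - 2\tau$ steps with delay at least $2\tau$, and the number of such steps over the whole horizon is at most $T/2$ by a Markov-type count against the budget $\sum_t d_t = \tau T$. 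You instead sum the delays within each block, obtaining at least $n_i^2/2$ per block, and then apply Cauchy--Schwarz across the $U$ blocks against the same budget, ending with the quadratic inequality $(T-U)^2 \le 2U T\tau$. Both aggregations are sound; yours avoids introducing the threshold $2\tau$ altogether and yields a constant of $2$ rather than $4$, while the paper's thresholding has the side benefit of matching the counting pattern (the set $\{t : d_t \le 2\tau\}$ having size at least $T/2$) that it reuses later in the high-noise analysis.
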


\begin{proof}
    Consider $U_{2\tau}$, the number of steps $t$ for which the delay $d_t$ is at least $2\tau$. 
    We must have $U_{2\tau} \le T/2$ (otherwise the total sum of delays exceeds $\tau T$, contradicting the definition of $\tau$). On the other hand,
    let $k$ be the number of updates that the algorithm makes.
    Let $t_1 < t_2 < ... < t_k$ be the steps in which an update is made. Denote $t_0 = 0$ and $t_{k+1} = T$.
    Now, fix $i$ and consider the steps at times $s_n = t_i + n$ for
    $n \in [1, 2, \ldots , t_{i+1} - t_i - 1]$. 
    In all those steps no update takes place and $x_{s_n} = x_{t_i}$. We must have $d_{s_n} > n$ for all $n$ (otherwise $x_t = x_{t-d_t}$ for $t=s_n$ and an update occurs). In particular we have that $d_{s_n} \ge 2\tau$ in at least $t_{i+1} - t_{i} - 1 - 2\tau$ steps in $[t_{i},t_{i+1}]$. Hence, 
    \[
       U_{2\tau} \ge \sum_{i=0}^{k-1} \brk{t_{i+1} - t_{i} - 1 - 2\tau}  = T - k(1 + 2\tau).
    \]
    %
    %
    Finally, it follows that
    $
        T - k(1 + 2\tau) \le T/2
    $
    which implies
    $    
        k \ge \frac{T}{4(\tau + 1)}
        .
    $
\end{proof}

Given the lemma above, we prove \cref{lem:no-noise} by showing that if the algorithm fails, it makes many updates in all of which we have $\norm{\nabla f(x_t)} > \epsilon$. By \cref{lem:loacl_improvement}, this means that in the $T$ time steps of the algorithm, it must decrease the value of $f$ significantly. Since we start at a point in which $f(x_1) \le F$, we must conclude that $T$ cannot be too large.

\begin{proof}[Proof of \cref{lem:no-noise}]
    Combining \cref{eq:descent} with $\eta = 1/(4\beta)$ and \cref{lem:update-lower-bound}, we get that if the algorithm fails with probability $\ge \tfrac12$ then
    \begin{align*}
        F 
        &\ge 
        \sum_{t=1}^T\E \Delta_t
        \ge 
        \frac{1}{16 \beta} \sum_{t=1}^T \E \brk[s]{\ind_t \|\nabla f(x_t)\|^2}
        \ge 
        \frac{1}{16 \beta}\E \brk[s]4{\sum_{t=1}^T  \ind_t \|\nabla f(x_t)\|^2}
        \\
        &\ge 
        \frac{1}{32 \beta}\E \brk[s]4{\sum_{t=1}^T  \ind_t \|\nabla f(x_t)\|^2 \;\Bigg|\; \text{algorithm fails}}
        \ge 
        \frac{\epsilon^2}{32 \beta} \E \brk[s]4{\sum_{t=1}^T  \ind_t \;\Bigg|\; \text{algorithm fails}}
        \ge 
        \frac{\epsilon^2}{32 \beta}\frac{T}{4(\tau + 1)}.
    \end{align*}
    This yields the lemma's statement.
\end{proof}

\subsection{Case (ii): \texorpdfstring{$\boldsymbol{\sigma > \epsilon}$}{}}

This is the ``high noise'' regime. 
For this case, we prove the following guarantee for the convergence of our algorithm.

\begin{lemma} \label{lem:improvements-tradeoff}
    Assume that $\sigma > \epsilon$ and the algorithm fails with probability $\ge \tfrac12$.
    Then, 
    $$
        \sum_{t=1}^{T}\E \Delta_t 
        \ge 
        \frac{T}{500 \beta} \min\brk[c]3{\frac{\epsilon^2}{\tau}, \frac{\epsilon^4}{\sigma^2}}
        .
    $$
    In particular, 
    $$
        T 
        \le 
        500 \beta F \brk3{
        \frac{\tau}{\epsilon^2}
        + 
        \frac{\sigma^2}{\epsilon^4}}
        .
    $$
\end{lemma}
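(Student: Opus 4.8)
The plan is to reduce the claimed lower bound on the cumulative improvement $\sum_t \E\Delta_t$ to a single quantity---the expected number of update steps---and then to control that quantity by exploiting the smallness of the step size in this regime. The ``in particular'' part is immediate once the main inequality is in hand: since $f\ge 0$ and $f(x_1)\le F$, telescoping gives $\sum_{t=1}^T\E\Delta_t = \E[f(x_1)-f(x_{T+1})]\le F$, so the main bound yields $F\ge\frac{T}{500\beta}\min\{\epsilon^2/\tau,\epsilon^4/\sigma^2\}$, which rearranges to $T\le 500\beta F\max\{\tau/\epsilon^2,\sigma^2/\epsilon^4\}\le 500\beta F(\tau/\epsilon^2+\sigma^2/\epsilon^4)$. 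So I focus on the main inequality.

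First I would convert improvement into a count of updates. Writing $K=\sum_{t=1}^T \ind_t$ for the number of updates, \cref{eq:descent} together with the fact that every update step has $\norm{\nabla f(x_t)}>\epsilon$ (so that, by smoothness and $\norm{x_t-x_t'}\le\epsilon/(2\beta)$, the delayed gradient $\norm{\nabla f(x_t')}$ is also of order $\epsilon$) gives
\[
\sum_{t=1}^T \E\Delta_t \;\ge\; \frac{\eta}{4}\,\E\sum_{t=1}^T \ind_t\norm{\nabla f(x_t')}^2 \;\gtrsim\; \eta\epsilon^2\,\E[K].
\]
With $\eta=\epsilon^2/(4\sigma^2\beta)$ this is of order $(\epsilon^4/\sigma^2\beta)\,\E[K]$, so the whole lemma comes down to the update lower bound $\E[K]\gtrsim T\min\{1,\sigma^2/(\tau\epsilon^2)\}$ for an absolute constant; substituting it reproduces exactly the two branches of the minimum.

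The heart of the argument---and the step I expect to be the main obstacle---is this update lower bound, and here the choice $\eta=\epsilon^2/(4\sigma^2\beta)$ is crucial. A single step moves the iterate by at most $\eta\norm{g_t}$, whose typical size is $\eta\sigma=\epsilon^2/(4\sigma\beta)$, i.e. a factor $\sim\epsilon/\sigma$ below the acceptance threshold $\epsilon/(2\beta)$. Consequently the displacement $\norm{x_t-x_t'}=\eta\norm{\sum_s \ind_s g_s}$ accumulated over a delay window behaves like a (noise-dominated) random walk and can only reach the threshold after on the order of $\sigma^2/\epsilon^2$ updates have entered the window. I would make this precise by bounding the second moment of the windowed noise sum, $\E\norm{\sum_s \ind_s N_s}^2\le\sigma^2\cdot(\text{number of updates in the window})$, and using a maximal (martingale) inequality to argue that, with constant probability, the first $\Omega(\sigma^2/\epsilon^2)$ updates following any ``reset'' all keep the displacement below $\epsilon/(2\beta)$ and are therefore accepted. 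This self-limiting behaviour organises the run into epochs, each contributing $\Omega(\sigma^2/\epsilon^2)$ updates in the noise-dominated case (and, when the local gradient is instead large, fewer updates but correspondingly larger per-update improvement, so that $\frac{\eta}{4}\norm{\nabla f}^2$ summed over the epoch is $\Omega(\epsilon^2/\beta)$ either way).

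Finally I would bound the number of epochs from below using the delay budget $\sum_t d_t=\tau T$. A pass-phase separating two epochs forces large delays exactly as in \cref{lem:update-lower-bound} (the $n$-th pass after an update requires $d>n$), so a pass-phase of length $g$ consumes $\Omega(g^2)$ of the delay budget; hence the total length of all pass-phases, and thus the spacing between epochs, is controlled by $\tau T$. Balancing the per-epoch update count $\sim\sigma^2/\epsilon^2$ against an epoch length $\sim\max\{\sigma^2/\epsilon^2,\tau\}$ yields $\E[K]\gtrsim T\min\{1,\sigma^2/(\tau\epsilon^2)\}$: when $\tau\lesssim\sigma^2/\epsilon^2$ almost every step is an update and we recover the noise term $\epsilon^4/\sigma^2$, while for $\tau\gtrsim\sigma^2/\epsilon^2$ the epochs recur every $\sim\tau$ steps and we recover the delay term $\epsilon^2/\tau$. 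The delicate point throughout is that the delays are adversarial, so the random-walk and stopping-time estimates for the displacement must hold uniformly over arbitrary window structures, and the charging of pass-phases to the delay budget has to be carried out simultaneously with the concentration argument rather than after it.
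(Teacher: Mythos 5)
Your proposed reduction of the lemma to the update-count lower bound $\E[K]\gtrsim T\min\{1,\sigma^2/(\tau\epsilon^2)\}$ is where the proof breaks: that intermediate claim is false under the lemma's hypotheses. The acceptance test compares $x_t$ to $x_{t-d_t}$, and nothing bounds the \emph{norm} of the true gradient (smoothness only bounds gradient \emph{differences}); moreover $\sigma^2$ is only an upper bound on the noise, which may be identically zero while $\eta=\epsilon^2/(4\sigma^2\beta)$ is still the step size used. So consider a trajectory passing through a region where $\norm{\nabla f(x'_t)}\ge 2\sigma^2/\epsilon$: a single accepted update then moves the iterate by $\eta\norm{g_t}\ge\epsilon/(2\beta)$, i.e.\ past the threshold, and with constant delays $d_t=\tau$ this yields the pattern ``one accepted update, then $\tau$ rejected steps,'' so $K=T/(\tau+1)$. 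For $1\ll\tau\ll\sigma^2/\epsilon^2$ your claim demands $K=\Omega(T)$, a contradiction; and the scenario is consistent with failure and with $\sum_t\E\Delta_t\le F$ (each jump costs $\approx\sigma^2/\beta$ in function value, so one just needs $T\lesssim\beta F(\tau+1)/\sigma^2$), hence it cannot be excluded. Your parenthetical remark (``fewer updates but correspondingly larger per-update improvement'') implicitly concedes this, but it converts the argument from a count bound into a per-epoch improvement bound, and it is precisely that version whose difficulties you name in your final sentence without resolving: (i) acceptance at time $t$ is governed by the displacement over the adversarial window $[t-d_t,t]$, which need not lie inside a single ``epoch'' (when $\tau\gg\sigma^2/\epsilon^2$, a window of length $2\tau$ spans many epochs), so a single-epoch maximal-inequality control of the noise walk does not bound the relevant distance; (ii) the drift part of the displacement, $\eta\sum_i \ind_i\norm{\nabla f(x'_i)}$, is not controlled by the number of updates at all, as the counterexample shows.

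The paper's proof avoids counting updates entirely, and the ingredient your sketch is missing is exactly its \cref{lem:staleness-bound}: the expected displacement over \emph{any} window is bounded by the accumulated expected improvement $\sum_i\E\Delta_i$ in that window. Both the drift part and the noise part are charged to improvement --- in particular the in-window update count itself is charged via $\epsilon^2\,\E\sum_i\ind_i\le\E\sum_i\ind_i\norm{\nabla f(x'_i)}^2\le\frac{4}{\eta}\sum_i\E\Delta_i$ (using \cref{eq:descent}), which is what neutralizes the large-gradient scenario. This yields a per-step dichotomy (\cref{corr:descent-guarantee}): for each $t$, either $\sum_{i=t-d_t}^{t-1}\E\Delta_i\ge\epsilon^2/(125\beta)$, or else (by Markov's inequality plus the failure assumption) an update occurs with probability $\ge\tfrac14$ and $\E\Delta_t\ge\epsilon^4/(64\sigma^2\beta)$. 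Summing the dichotomy over the $\ge T/2$ steps with $d_t\le2\tau$ and double counting --- each $\E\Delta_i\ge0$ appears in at most $2\tau$ such windows, whence the $\tfrac{1}{2\tau}$ weighting --- gives the stated bound with no epoch decomposition and no stopping-time analysis. If you want to salvage your plan, you would first have to prove an analogue of \cref{lem:staleness-bound}; once you have it, the epochs and the count bound become unnecessary.
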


This result is attained using the following observation. Consider the iterate of algorithm at time $t$, $x_t$, and the point at which the gradient was computed $x'_t = x_{t - d_t}$. 
We claim that if the algorithm has not decreased the function value sufficiently during the interval $[t-d_t,t-1]$, then it is likely to trigger a large decline in the function value at time $t$.
Formally, either $\E \Delta_t$ is large, or $\sum_{i=t-d_t}^{t-1} \E \Delta_i$ is large.
To show the claim, we first upper bound the distance $\norm{x_t-x'_t}$ in terms of $\sum_{i=t-d_t}^{t-1} \E \Delta_i$, as shown by the following technical lemma.
    
\begin{lemma} \label{lem:staleness-bound}
For all $t$ and $k$, it holds that
    \[
        \E \norm{x_{t} - x_{t+k}}
        \le 
        \sqrt{\frac{1}{\beta} \sum_{i=t}^{t+k-1} \E \Delta_i} 
        + 
        \frac{4}{\epsilon} \sum_{i=t}^{t+ k - 1} \E \Delta_i
        .
    \]
\end{lemma}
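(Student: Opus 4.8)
The plan is to write the displacement $x_{t+k}-x_t$ as a telescoping sum of the per-step updates and then cleanly separate the deterministic ``drift'' contribution of the delayed gradients from the stochastic contribution of the noise. Since $x_{i+1}-x_i = -\eta\, \ind_i g_i$ and $g_i = \nabla f(x'_i)+N_i$, I would sum over $i$ from $t$ to $t+k-1$ to get $x_{t+k}-x_t = -\eta\sum_{i=t}^{t+k-1}\ind_i\brk{\nabla f(x'_i)+N_i}$, and then apply the triangle inequality to bound $\norm{x_t-x_{t+k}}$ by a drift term $\eta\sum_i \ind_i\norm{\nabla f(x'_i)}$ plus a noise term $\eta\norm{\sum_i \ind_i N_i}$ (writing $\sum_i$ for $\sum_{i=t}^{t+k-1}$ from now on). I expect these two terms to produce, respectively, the second and first summands of the claimed bound.

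For the drift term, the key observation is that every update step carries a gradient that is bounded below: when $\ind_i=1$ the test on \cref{ln:test-update} gives $\norm{x_i-x'_i}\le \ifrac{\epsilon}{2\beta}$, so smoothness forces $\norm{\nabla f(x'_i)}$ to be of order $\epsilon$ (it is comparable to $\norm{\nabla f(x_i)}>\epsilon$). This lets me trade one power of $\norm{\nabla f(x'_i)}$ for a factor $\ifrac{1}{\epsilon}$ and reduce the first-power sum to a sum of squared norms, i.e.\ $\ind_i\norm{\nabla f(x'_i)}\lesssim \tfrac1\epsilon \ind_i\norm{\nabla f(x'_i)}^2$. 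The descent guarantee \cref{eq:descent} then gives $\E\brk[s]{\ind_i\norm{\nabla f(x'_i)}^2}\le \tfrac4\eta\E\Delta_i$, and after taking expectations the factors of $\eta$ cancel and the drift is controlled by $\tfrac4\epsilon\sum_i\E\Delta_i$.

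The noise term is where the real work is, and I expect it to be the main obstacle. First I would pass from the norm to its second moment via Jensen, $\E\norm{\sum_i \ind_i N_i}\le (\E\norm{\sum_i \ind_i N_i}^2)^{1/2}$. The crucial structural point is that the summands $\ind_i N_i$ form a martingale-difference sequence: $\ind_i$ depends only on $x_i$ and $x'_i$ and is therefore determined \emph{before} the fresh gradient queried at step $i$ (hence $N_i$) is drawn, while $N_i$ has conditional mean zero. Consequently all cross terms vanish and $\E\norm{\sum_i \ind_i N_i}^2 = \sum_i \E\brk[s]{\ind_i\norm{N_i}^2}\le \sigma^2\sum_i \E\ind_i$. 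It then remains to convert the expected number of updates $\sum_i\E\ind_i$ back into improvement: using again that $\norm{\nabla f(x'_i)}\gtrsim\epsilon$ on update steps, \cref{eq:descent} yields $\E\ind_i\lesssim \tfrac1{\eta\epsilon^2}\E\Delta_i$.

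Finally I would combine these estimates with the precise choice of step size. Substituting the update-count bound into the noise estimate gives $\eta\,(\sigma^2\sum_i\E\ind_i)^{1/2}\lesssim \tfrac{\sigma\sqrt{\eta}}{\epsilon}\,(\sum_i\E\Delta_i)^{1/2}$, and here the definition $\eta=\tfrac1{4\beta}\min\brk[c]3{1,\ifrac{\epsilon^2}{\sigma^2}}$---in particular $\eta\sigma^2\le \ifrac{\epsilon^2}{4\beta}$---makes the prefactor $\tfrac{\sigma\sqrt{\eta}}{\epsilon}$ collapse to the clean scale $\ifrac{1}{\sqrt{\beta}}$, producing the first summand $\sqrt{\tfrac1\beta\sum_i\E\Delta_i}$. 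Adding the drift and noise bounds then proves the lemma. The two delicate points to get exactly right are the measurability/orthogonality argument for the noise (so the cross terms genuinely cancel) and the bookkeeping of the interplay between $\eta$, $\sigma$ and $\epsilon$, so that the step-size choice yields precisely the $\ifrac{1}{\beta}$ scaling inside the root rather than a quantity that grows with $\sigma$.
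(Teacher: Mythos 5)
Your proposal follows the paper's proof almost step for step: the same telescoping decomposition $x_{t+k}-x_t=-\eta\sum_i \ind_i(\nabla f(x'_i)+N_i)$, the same triangle-inequality split into a drift term and a noise term, the same martingale-orthogonality argument killing the cross terms $\E[\ind_i\ind_j N_i\dotp N_j]$, and the same use of \cref{eq:descent} to convert both $\E[\ind_i\norm{\nabla f(x'_i)}^2]$ and $\E[\ind_i]$ into expected improvements $\E\Delta_i$. (Your one cosmetic difference—using the inequality $\eta\sigma^2\le\ifrac{\epsilon^2}{4\beta}$ rather than the exact value $\eta=\ifrac{\epsilon^2}{4\beta\sigma^2}$ that holds in the high-noise case where this lemma is applied—is harmless.)

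There is, however, one concrete deviation that prevents your argument from giving the lemma \emph{as stated}. You justify the key lower bound on $\norm{\nabla f(x'_i)}$ via smoothness: when $\ind_i=1$, the test in \cref{ln:test-update} gives $\norm{x_i-x'_i}\le\ifrac{\epsilon}{2\beta}$ and $\norm{\nabla f(x_i)}>\epsilon$, hence only $\norm{\nabla f(x'_i)}>\ifrac{\epsilon}{2}$. The paper does not need smoothness here at all: in the stopped variant of the algorithm set up at the start of the analysis, $\ind_i=1$ implies $\norm{\nabla f(x_s)}\ge\epsilon$ for \emph{every} $s\le i$, and since $x'_i=x_{i-d_i}$ is itself a past iterate, one gets $\norm{\nabla f(x'_i)}\ge\epsilon$ with constant $1$ rather than $\tfrac12$. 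Your weaker bound costs a factor of $2$ in each term: tracing your own estimates yields $2\sqrt{\tfrac1\beta\sum_i\E\Delta_i}+\tfrac{8}{\epsilon}\sum_i\E\Delta_i$. This is not merely cosmetic, because \cref{corr:descent-guarantee} plugs the lemma's exact constants against the threshold $\ifrac{\epsilon^2}{125\beta}$ to conclude $\E\norm{x_{t-d_t}-x_t}\le\ifrac{\epsilon}{8\beta}$, and with your constants that numerical step fails (one would have to retune the thresholds and the constants propagating into \cref{thm:main}). So the missing ingredient is not analytic but structural: use the fact that $x'_i$ is a past iterate of the stopped algorithm, which gives the gradient lower bound for free and with the right constant.
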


\begin{proof}
    We have 
    \begin{align*}
        \E \|x_{t} - x_{t+k}\| 
        = 
        \eta\E \norm4{\sum_{i=t}^{t+ k- 1 } \ind_i \brk{\nabla f(x'_i) + N_i}} 
        \le 
        \eta\E \norm4{\sum_{i=t}^{t+ k - 1} \ind_i\nabla f(x'_i)}
        +
        \eta \E \norm4{\sum_{i=t}^{t+ k- 1} \ind_i N_i}
        .
    \end{align*}
    We continue bounding the second term above as follows:
    \begin{align*}
        \E \left\| \sum_{i=t}^{t+ k- 1} \ind_i N_i\right\| 
        &\le
        \sqrt{\E \left\| \sum_{i=t}^{t+ k- 1 } \ind_i N_i\right\|^2} 
        \\
        &= 
        \sqrt{\E  \sum_{i=t}^{t+ k- 1 }\sum_{j=t}^{t+ k- 1 } \ind_i \ind_j N_i \cdot N_j} 
        \\
        &= 
        \sqrt{\E  \sum_{i=t}^{t+ k- 1} \ind_i \|N_i\|^2} 
        \tag{$\E[N_i \mid \ind_i, \ind_j,N_j] = 0$ for $i>j$}
        \\
        &\le 
        \sigma \sqrt{\E  \sum_{i=t}^{t+ k- 1}  \ind_i}
        \\
        &\le 
        \frac{\sigma}{\epsilon} \sqrt{\E  \sum_{i=t}^{t+ k- 1}  \ind_i \norm{\nabla f(x'_i)}^2}
        \tag{$\norm{\nabla f(x'_i)} \ge \epsilon$ when $\ind_i = 1$}
        \\
        &\le 
        \frac{\sigma}{\epsilon} \sqrt{ \frac{16 \sigma^2 \beta}{\epsilon^2} \, \sum_{i=t}^{t+ k- 1} \E \Delta_i}
        \tag{\cref{eq:descent}, $\eta = \ifrac{\epsilon^2}{4 \beta \sigma^2}$}
        \\
        &=
        \frac{4 \sigma^2}{\epsilon^2} \sqrt{\beta \sum_{i=t}^{t+ k- 1} \E \Delta_i}
        \\
        &=
        \frac{1}{\eta} \sqrt{\frac{1}{\beta} \sum_{i=t}^{t+ k- 1} \E \Delta_i}
        \tag{$\eta = \ifrac{\epsilon^2}{4 \beta \sigma^2}$}
        ,
    \end{align*}
    and
    \begin{align*}
        \E \norm4{\sum_{i=t}^{t+ k- 1} \ind_i\nabla f(x'_i)}
        &\le 
        \sum_{i=t}^{t+ k- 1} \E \ind_i \norm{\nabla f(x'_i)}
        \\
        &\le 
        \frac{1}{\epsilon} \sum_{i=t}^{t+ k- 1} \E \ind_i \norm{\nabla f(x'_i)}^2 
        \tag{$\norm{\nabla f(x'_i)} \ge \epsilon$ when $\ind_i = 1$}
        \\
        &\le
        \frac{4}{\epsilon \eta} \sum_{i=t}^{t+ k- 1} \E \Delta_i
        \tag{\cref{eq:descent}}
        .
    \end{align*}
    This completes the proof.
\end{proof}

Given the lemma above, it is now clear that if $\sum_{i=t-d_t}^{t-1} \E \Delta_i$ is sufficiently small, then $\E \norm{x_t - x'_t} \ll \ifrac{\epsilon}{\beta}$ which means that the algorithm is likely (with constant probability) to take a step at time $t$.
This argument yields the following.

\begin{corollary} \label{corr:descent-guarantee}
    Assume that the algorithm fails with probability $\ge \tfrac12$.
    If
    $
        \sum_{i=t-d_t}^{t-1}\E \Delta_i 
        < 
        \ifrac{\epsilon^2}{125 \beta}
    $ 
    then 
    $ 
        \E \Delta_t 
        \ge 
        \ifrac{\epsilon^4}{64\sigma^2 \beta}
        .
    $
    In particular,
    $$
        \E \Delta_t  + \frac{1}{2 \tau}\sum_{i=t-d_t}^{t-1}\E \Delta_i 
        \ge 
        \frac{1}{250 \beta} \min\brk[c]3{\frac{\epsilon^2}{\tau}, \frac{\epsilon^4}{\sigma^2}}
        .
    $$
\end{corollary}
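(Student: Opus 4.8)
The plan is to establish the first claim by chaining three ingredients: the staleness bound of \cref{lem:staleness-bound}, Markov's inequality, and the conditional descent guarantee \cref{eq:descent}; throughout we are in the regime $\sigma > \epsilon$ where $\eta = \ifrac{\epsilon^2}{(4\beta\sigma^2)}$. First I would instantiate \cref{lem:staleness-bound} on the window $[t-d_t,t-1]$, i.e.\ taking its ``$t$'' to be $t-d_t$ and its ``$k$'' to be $d_t$, so that $x_{t+k}=x_t$ and $x_t'=x_{t-d_t}$, yielding
\[
    \E\norm{x_t - x'_t}
    \le
    \sqrt{\frac{1}{\beta}\sum_{i=t-d_t}^{t-1}\E\Delta_i}
    + \frac{4}{\epsilon}\sum_{i=t-d_t}^{t-1}\E\Delta_i .
\]
Under the hypothesis $\sum_{i=t-d_t}^{t-1}\E\Delta_i < \ifrac{\epsilon^2}{(125\beta)}$, the first term is below $\ifrac{\epsilon}{(\beta\sqrt{125})}$ and the second below $\ifrac{4\epsilon}{(125\beta)}$, so $\E\norm{x_t-x'_t} < c\,\ifrac{\epsilon}{\beta}$ with $c = \ifrac{1}{\sqrt{125}} + \ifrac{4}{125} < \tfrac18$. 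The constant $125$ is engineered precisely so that $c$ lands just below $\tfrac18$.

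Next I would convert this expected-distance bound into a lower bound on the probability that the algorithm updates at time $t$. By Markov's inequality, $\Pr[\norm{x_t-x'_t} > \ifrac{\epsilon}{(2\beta)}] < 2c < \tfrac14$, hence the distance test passes with probability exceeding $1-2c > \tfrac34$. The indicator $\ind_t$ additionally requires $\norm{\nabla f(x_t)} > \epsilon$; but the event $\{\norm{\nabla f(x_t)} \le \epsilon\}$ is contained in the event that the algorithm succeeds, which has probability at most $\tfrac12$ by assumption. Since $\Pr[\ind_t=1] \ge \Pr[\norm{x_t-x'_t}\le\ifrac{\epsilon}{(2\beta)}] - \Pr[\norm{\nabla f(x_t)}\le\epsilon]$, I obtain $\Pr[\ind_t=1] > (1-2c) - \tfrac12 = \tfrac12 - 2c > \tfrac14$. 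Finally, $\ind_t$ is measurable with respect to $x_t,x_t'$, hence determined before the fresh noise $N_t$, and $\ind_t=1$ forces both $\norm{x_t-x'_t}\le\ifrac{\epsilon}{(2\beta)}$ and $\norm{\nabla f(x'_t)}\ge\epsilon$ (the latter via the note following the definition of $\ind_t$). Applying \cref{lem:loacl_improvement} conditionally on $x_t,x_t'$ therefore gives $\E[\Delta_t\mid x_t,x_t'] \ge \ind_t\cdot\ifrac{\eta\epsilon^2}{4}$, and taking expectations, $\E\Delta_t \ge \ifrac{\eta\epsilon^2}{4}\cdot\Pr[\ind_t=1] > \ifrac{\eta\epsilon^2}{16}$. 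Substituting $\eta = \ifrac{\epsilon^2}{(4\beta\sigma^2)}$ yields $\E\Delta_t \ge \ifrac{\epsilon^4}{(64\sigma^2\beta)}$, as claimed.

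For the ``in particular'' inequality I would split on the hypothesis. If $\sum_{i=t-d_t}^{t-1}\E\Delta_i \ge \ifrac{\epsilon^2}{(125\beta)}$, then the term $\ifrac{1}{(2\tau)}\sum_{i=t-d_t}^{t-1}\E\Delta_i$ alone is at least $\ifrac{\epsilon^2}{(250\beta\tau)}$, while $\E\Delta_t \ge 0$ by \cref{lem:loacl_improvement}. Otherwise the first claim applies, giving $\E\Delta_t \ge \ifrac{\epsilon^4}{(64\sigma^2\beta)} \ge \ifrac{\epsilon^4}{(250\sigma^2\beta)}$. In either case the left-hand side is at least $\ifrac{1}{(250\beta)}\min\{\ifrac{\epsilon^2}{\tau}, \ifrac{\epsilon^4}{\sigma^2}\}$.

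The main obstacle, and the one place where the ``fails with probability $\ge\tfrac12$'' hypothesis is genuinely used, is the passage from the expected-distance bound to a lower bound on $\Pr[\ind_t=1]$: Markov controls only the distance test, so one must separately argue that the gradient test $\norm{\nabla f(x_t)}>\epsilon$ holds with probability at least $\tfrac12$, which is exactly what the failure assumption buys. The quantitative tightness is also delicate—the slack between $c<\tfrac18$ and the required $\Pr[\ind_t=1]>\tfrac14$ is razor-thin, which is why the specific constant $125$ (rather than a rounder number) appears in the hypothesis.
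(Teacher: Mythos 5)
Your proposal is correct and follows essentially the same route as the paper's proof: instantiate \cref{lem:staleness-bound} on the window $[t-d_t,t-1]$ to get $\E\norm{x_t-x'_t}<\epsilon/(8\beta)$, convert this into $\Pr[\ind_t=1]>\tfrac14$ via Markov's inequality combined with the failure-probability assumption, apply \cref{lem:loacl_improvement} with $\eta=\epsilon^2/(4\beta\sigma^2)$ to get $\E\Delta_t\ge\epsilon^4/(64\sigma^2\beta)$, and conclude the ``in particular'' claim by the case split on the hypothesis (which is the paper's implicit final step). The only cosmetic difference is that you bound $\Pr[\norm{\nabla f(x_t)}\le\epsilon]$ by the success probability and invoke the freezing property of the modified algorithm to ensure $\norm{\nabla f(x'_t)}\ge\epsilon$, whereas the paper intersects the distance event directly with the failure event (under which all gradients along the trajectory exceed $\epsilon$)---these amount to the same argument.
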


\begin{proof}
    If 
    $
        \sum_{i=t-d_i}^{t-1} \E \Delta_i 
        < 
        \ifrac{\epsilon^2}{125 \beta}
        ,
    $
    then 
    $
        \E\|x_{t-d_t}-x_t\| 
        \le 
        \ifrac{\epsilon}{8 \beta}
    $
    by \cref{lem:staleness-bound}.
    By a Markov inequality, with probability $\ge \tfrac{3}{4}$, we have
    $
        \|x_{t-d_t}-x_t\| 
        \le  
        \ifrac{\epsilon}{2 \beta}
        .
    $ 
    Since the probability that $\|\nabla f(x_{t-d_t})\| > \epsilon$ is at least $\frac12$, we get that $\E \ind_t \ge \frac14$. 
    By \cref{lem:loacl_improvement} this implies that 
    $$
        \E\Delta_t 
        \ge
        \frac14 \cdot \frac{\epsilon^2 \cdot \epsilon^2}{16 \sigma^2 \beta} 
        =
        \frac{\epsilon^4}{64\sigma^2 \beta}
        ,
    $$
    which yields our claim.
\end{proof}

We now prove our main claim. We show that if the algorithm fails, then in \emph{all} time steps in which $d_t \le 2 \tau$ (of which there are at least $T/2$), either the algorithm makes a substantial step, or it has made significant updates in the interval $[t-d_t, t-1]$. In any case, the function value must necessarily decrease overall in the $T$ time steps of the algorithm, concluding that $T$ cannot be too large. 

\begin{proof}[Proof of \cref{lem:improvements-tradeoff}]
    We have,
    \[
        \sum_{t=1}^{T}\E \Delta_t 
        \ge 
        \sum_{t : d_t\le 2\tau} \frac{1}{2 \tau}\sum_{i=t-d_t}^{t-1}\E\Delta_i
        .
    \]
    Hence, using \cref{corr:descent-guarantee},
    \begin{align*}
        \sum_{t=1}^{T}\E \Delta_t 
        &\ge
        \frac12 \sum_{t : d_t\le 2\tau} \brk4{\E\Delta_t + \frac{1}{2\tau}\sum_{i=t-d_t}^{t-1}\E\Delta_i} \\
        &\ge 
        \abs!{\set{ t : d_t\le 2\tau }} \, \frac{1}{250 \beta} \min\brk[c]3{\frac{\epsilon^2}{\tau}, \frac{\epsilon^4}{\sigma^2}} \\
        &\ge 
        \frac{T}{2} \, \frac{1}{250 \beta} \min\brk[c]3{\frac{\epsilon^2}{\tau}, \frac{\epsilon^4}{\sigma^2}} \\
        &=
        \frac{T}{500 \beta} \min\brk[c]3{\frac{\epsilon^2}{\tau}, \frac{\epsilon^4}{\sigma^2}}
        ,
    \end{align*}
    where we used Markov's inequality to show that $| \{ t : d_t\le 2\tau\} | \ge \frac12 T$.
\end{proof}

\subsection{Concluding the proof}

\begin{proof}[Proof of \cref{thm:main}]
In the case $\sigma\leq\epsilon$, \cref{lem:no-noise} implies that if $T > \ifrac{128 \beta F (\tau +1)}{\epsilon^2}$ then the algorithms succeeds with probability greater than $1/2$, which yields the theorem in this case.
Similarly, \cref{lem:improvements-tradeoff} gives our claim in the case when $\sigma>\epsilon$.
%
\end{proof}

\section{Experiments}
To illustrate the robustness and efficacy of \algo, we present
a comparison between the performance of SGD versus \algo under various delay distributions.
In particular, we show that \algo requires significantly less iterations to reach a fixes goal and is more robust to varying delay distributions.


\subsection{Setup}

The main goal of our experimental setup is to be reproducible. 
For that end, the experimentation is done in two phases. First, we perform a simulation to determine the delay $d_t$ at each iteration without actually computing any gradients:\footnote{Note that up to the training data ordering a computation of $T$ steps of \algo or SGD is uniquely determined by the starting state $x_1$ and the sequence $\{t - d_t\}_{t=1 \ldots T}$.}
this is done by simulating $N$ concurrent worker threads sharing and collectively advancing a global iteration number, where each worker repeatedly records the current global iteration number $t_{\text{start}}$, waits a random amount of time from a prescribed Poisson distribution, then records the new global iteration number $t = t_{\text{end}}$ and the difference $d_t = t_{\text{end}} - t_{\text{start}}$, and increases the global iteration number. 
This information (a {\em delay schedule}) is calculated once for each tested scheme (differing in the number of workers and random distribution, as detailed below), and is stored for use in the second phase. 

In the second phase of the experiments, the algorithms SGD and \algo are executed for each delay schedule. Here, at every iteration the gradient is computed (if needed) and is kept until its usage as dictated by the schedule (and then applied at the appropriate global iteration number).  
As a result of this configuration, we get a fully reproducible set of experiments, where the algorithms performance may be compared as they are executed over identical delay series of identical statistical properties.

We created four different delay schedules:
A baseline schedule (A) using $N=10$ workers and sampling the simulated wait from a Poisson distribution (this schedule serves to compare \algo and SGD in a setting of relatively small delay variance) and schedules (B) (C) and (D) all using $N=75$ workers and sampling the simulated wait from bi-modal mixtures of Poisson distributions of similar mean but increasing variance respectively.\footnote{See the \supplementaryMaterial for specific parameter values and implementation details.}
See Figure \ref{fig:delay} in the \supplementaryMaterial for an illustration of the delay distributions of the four delay schedules used.

All training is performed on the standard CIFAR-10 dataset~\citep{krizhevsky2009learning} using a ResNet56 with $9$ blocks model~\citep{he2016deep} and implemented in TensorFlow~\citep{tensorflow2015-whitepaper}. 
We compare \algo (\cref{alg:sgd-with-delays}) to the SGD algorithm
which unconditionally updates the state $x_t$ given the stochastic delayed gradient $g_t$ (recall that $g_t$ is the stochastic gradient at state $x_{t-d_t}$).

For both algorithms, instead of a constant learning rate $\eta$ we use a piecewise-linear learning rate schedule as follows: we consider
a baseline $\eta_0$ piecewise-linear learning rate schedule\footnote{With rate changes at three achieved accuracy points 0.93, 0.98, and 0.99.} that achieves optimal performance in a synchronous distributed optimization setting (that is, for $d_t \equiv 0)$\footnote{This is also the best performance achievable in an asynchronous setting.}
and search (for each of the four delay schedules and each algorithm -- to compensate for the effects of delays) for the best multiple of the baseline rate and the best first rate-change point.
Alternatively, we also used a cosine decay learning rate schedule (with the duration of the decay as meta parameters).
Another meta-parameter we optimize is the threshold $\epsilon/(2\beta)$ in \cref{ln:test-update} of \algo. Batch size 64 was used throughout the experiments.
Note that although use chose the threshold value $\ifrac{\epsilon}{2 \beta}$ by an exhaustive search, in practice, a good choice can be found by logging the distance values during a typical execution and choosing a high percentile value. See \cref{S:efficient} for more details.

\begin{figure*}
\centering
\subfigure{\includegraphics[width=.49\textwidth]{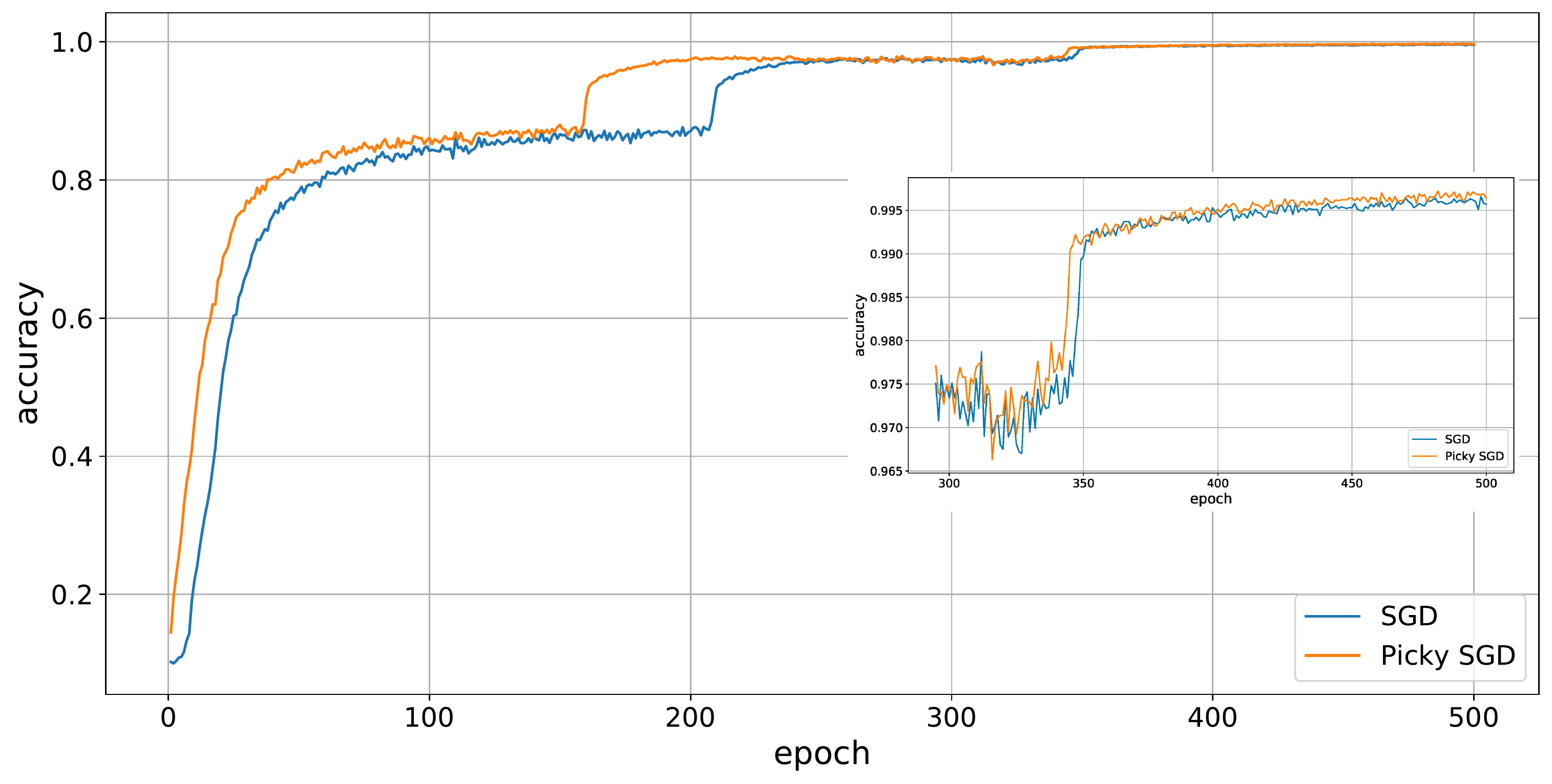}}
\subfigure{\includegraphics[width=.49\textwidth]{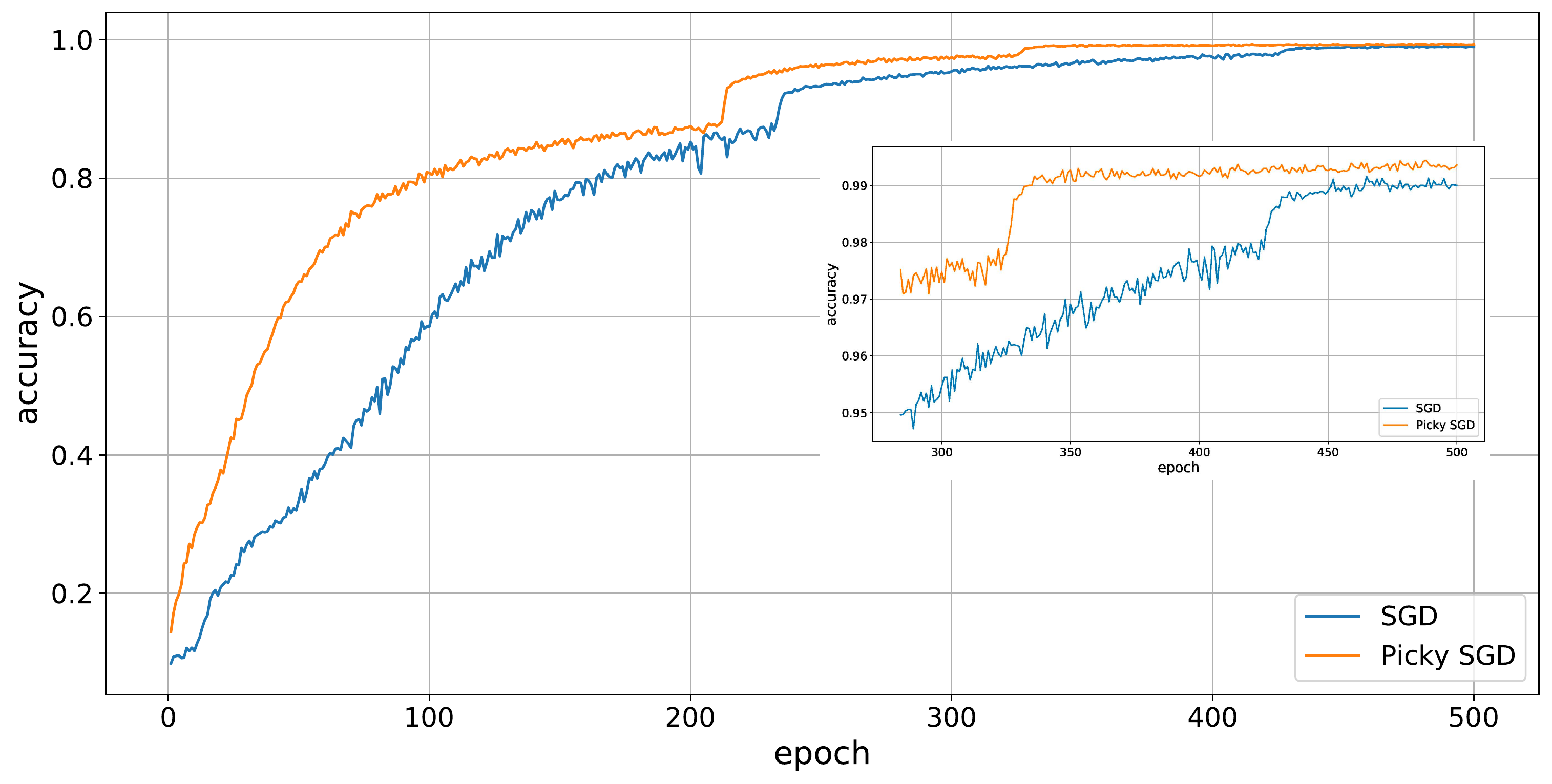}}
\subfigure{\includegraphics[width=.49\textwidth]{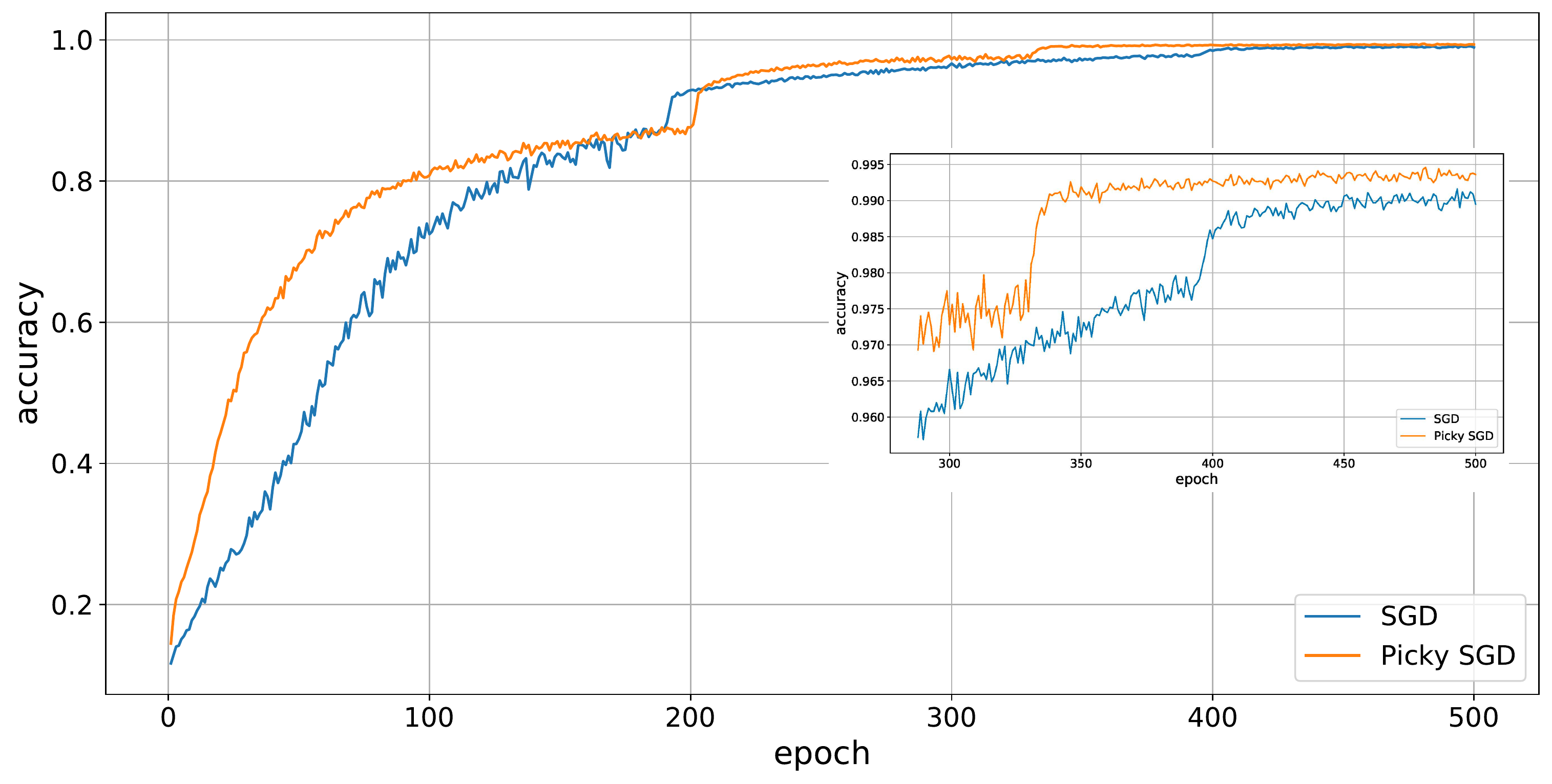}}
\subfigure{\includegraphics[width=.49\textwidth]{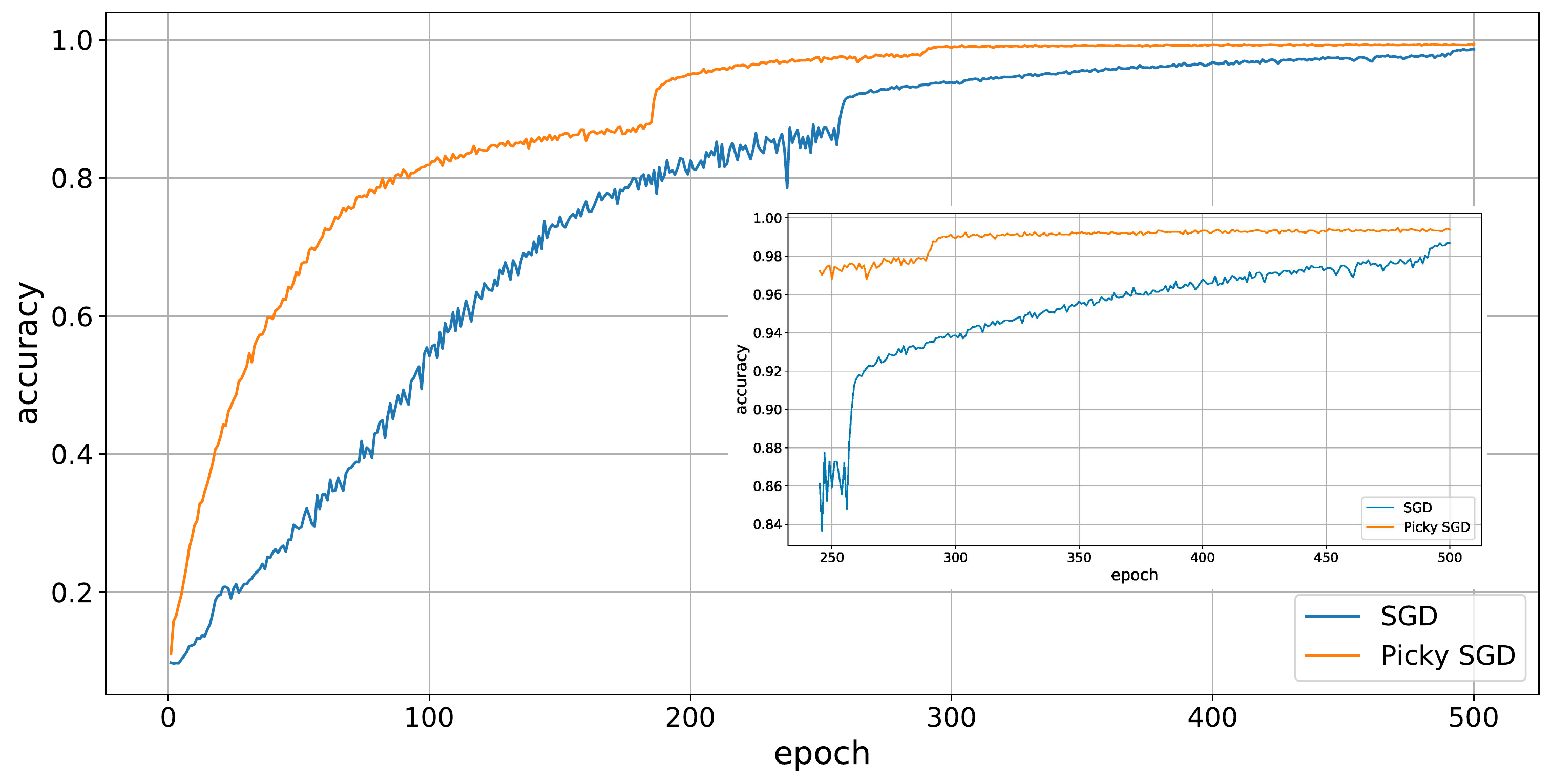}}
\vspace{-0.15in}
\caption{Accuracy trajectory (with a zoom-in on the tail of the convergence) over train epochs for the four delay schedules of \cref{fig:delay}, respectively: the key metrics (reported in \cref{table:results}) for each trajectory are epochs to reach 0.99 accuracy (the number of epochs required to reach the 0.99 accuracy mark) and the baseline learning rate multiplier $\ifrac{\eta}{\eta_0}$.}
\label{fig:algs}
\end{figure*}

\subsection{Results}
The accuracy trajectory for the best performing combination of parameters of each algorithm for each of the four delay schedules
is shown in \cref{fig:algs} and summarized in \cref{table:results}. 
Clearly, \algo significantly outperforms SGD in terms of the final accuracy and the number of epochs it takes to achieve it. 
We also emphasize that the generalization performance (that is, the evaluation accuracy as related to the training accuracy) was not observed to vary across delay schedules or the applied algorithms (see e.g., \cref{fig:algseval2} in the \supplementaryMaterial), and that the nature of the results is even more pronounced when using the alternative cosine decay learning rate schedule (see \cref{fig:algseval_cd} in the \supplementaryMaterial).   
Specific details of the meta parameters used, and additional performance figures are reported in \cref{sec:exp_detail}.

\begin{table}[h!]
\centering
\caption{Summary of the key metrics from \cref{fig:algs}, for each of the four delay schedules A, B, C, and D 
.}
\vspace{0.1in}
\begin{tabular}{ |p{0.3cm}||p{1.0cm}|p{1.0cm}||p{1.0cm}|p{1.0cm}|}
 \hline
    & \multicolumn{2}{|c||}{Epochs to 0.99\%} 
    & \multicolumn{2}{|c|}{LR multiplier $(\ifrac{\eta}{\eta_0})$} \\
 \hline
  & Picky SGD & SGD & Picky SGD & SGD \\
  \hline
  \hline
 A& 344 & 350 & 0.5 & 0.5\\
 \hline
 B& 333 & 451 & 0.2 & 0.05\\
 \hline
 C& 337 & 438 & 0.2 & 0.05\\
 \hline
 D& 288 & 466 & 0.2 & 0.05\\
 \hline
\end{tabular}
\label{table:results}
\end{table}
\subsection{Discussion}

We first observe that while the number of epochs it takes \algo to reach the target accuracy mark is almost the same across the delay schedules (ranging from $288$ to $344$), SGD requires significantly more epochs to attain the target accuracy (ranging from $350$ up to $466$ for the highest variance delay schedule)---this is consistent with the average-delay bound dependence of \algo (as stated in \cref{thm:main}) compared to the max-delay bound dependence of SGD. Furthermore, the best baseline learning rate multiplier meta-parameter for \algo is the same~(0.2) across all high-variance delay schedules, while the respective meta parameter for SGD is significantly smaller~(0.05) and sometimes varying, explaining the need for more steps to reach the target and evidence of \algo superior robustness.



\subsection*{Acknowledgements}

AD is partially supported by the Israeli Science Foundation (ISF) grant no.~2258/19.
TK is partially supported by the Israeli Science Foundation (ISF) grant no.~2549/19, by the Len Blavatnik and the Blavatnik Family foundation, and by the Yandex Initiative in Machine Learning.

\bibliography{references}

\appendix
\onecolumn

\supplementaryTitle



\section{\algo in the Convex Case} \label{sec:convex}

The algorithm for the convex case is a variant of \cref{alg:sgd-with-delays} and is displayed in \cref{alg:sgd-convex-with-delays}. In fact, the only difference from \cref{alg:sgd-with-delays} is in \cref{ln:convex-test} where the we use a threshold of $\sqrt{\ifrac{\epsilon}{8 \beta}}$ instead of $\ifrac{\epsilon}{2 \beta}$.

\begin{algorithm}[ht]
    \caption{\algo for Convex Objectives \label{alg:sgd-convex-with-delays}}
    \begin{algorithmic}[1]
        \STATE {\bf input}: 
            learning rate $\eta$, 
            target accuracy $\epsilon$.
        \FOR{$t=1,\ldots,T$}
            \STATE {\bf receive} delayed stochastic gradient $g_t$ and point $x_{t-d_t}$ such that $\E_t[g_t] = \nabla f(x_{t-d_t})$.
            \IF{$\norm{x_t - x_{t-d_t}} \le \sqrt{\ifrac{\epsilon}{8 \beta}}$} \alglinelabel{ln:convex-test}
                \STATE {\bf update:} $x_{t+1} = x_t - \eta g_t$.
            \ELSE 
                \STATE {\bf pass:} $x_{t+1} = x_t$.
            \ENDIF
            \STATE {\bf query} stochastic gradient at $x_{t+1}$
        \ENDFOR
    \end{algorithmic}
\end{algorithm}

Our guarantee for the algorithm is as follows.

\begin{theorem} \label{thm:convex-main}
    Let $x_\star = \argmin_{x \in \reals^d} f(x)$.
    Suppose that \cref{alg:sgd-convex-with-delays} is initialized at $x_1 \in \reals^d$ with $\norm{x_1 - x_\star} \leq F$ and run with
    \[
        T 
        \geq 
        1600 F^2 \brk3{\frac{\sigma^2}{\epsilon^2} + \frac{\beta(\tau+1)}{\epsilon}}
        ,\quad
        \eta 
        = 
        \min \brk[c]3{\frac{1}{16 \beta}, \frac{\epsilon}{8 \sigma^2}}
        ,
    \]
    where $\tau$ be the average delay, i.e., $\tau = (\ifrac{1}{T}) \sum_{t=1}^T d_t$.
    Then, with probability at least $\frac12$, there is some $1 \leq t \leq T$ for which $f(x_t) - f(x_\star) \le \epsilon$.
\end{theorem}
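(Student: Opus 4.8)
The plan is to mirror the non-convex analysis of \cref{thm:main} almost line for line, with the single change that the Lyapunov potential becomes the squared distance to the optimum $D_t = \norm{x_t - x_\star}^2$ rather than the function value $f(x_t)$---this is the natural potential for convex optimization. As before, I would first pass to the analysis variant that freezes the iterate as soon as it reaches a point with $f(x_t) - f(x_\star) \le \epsilon$; the guarantee for the variant is equivalent to that for \cref{alg:sgd-convex-with-delays}. The key structural observation that makes the delay bookkeeping go through is that $x'_t = x_{t-d_t}$ is itself one of the earlier iterates, so conditioned on the variant \emph{failing} we automatically have $f(x'_t) - f(x_\star) > \epsilon$ at every step (otherwise that very iterate would already witness success).

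First I would prove the convex analog of the descent lemma (\cref{lem:loacl_improvement}). Expanding $\E[D_{t+1} \mid x_t, x'_t]$ for an update $x_{t+1} = x_t - \eta g_t$ produces the three standard terms; the only nonstandard one is the inner product $\nabla f(x'_t) \dotp (x_t - x_\star)$, which I would split as $\nabla f(x'_t)\dotp(x'_t - x_\star) + \nabla f(x'_t)\dotp(x_t - x'_t)$. Convexity lower-bounds the first piece by $f(x'_t) - f(x_\star)$, while Cauchy--Schwarz together with the self-bounding smoothness inequality $\norm{\nabla f(x'_t)}^2 \le 2\beta(f(x'_t) - f(x_\star))$ and the test threshold $\norm{x_t - x'_t} \le \sqrt{\ifrac{\epsilon}{(8\beta)}}$ lower-bounds the second by $-\tfrac12\sqrt{\epsilon (f(x'_t) - f(x_\star))}$. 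The threshold value $\sqrt{\ifrac{\epsilon}{(8\beta)}}$ is tuned precisely so that, invoking $f(x'_t) - f(x_\star) > \epsilon$, this is in turn at least $-\tfrac12(f(x'_t) - f(x_\star))$, leaving $\nabla f(x'_t)\dotp(x_t - x_\star) \ge \tfrac12 (f(x'_t) - f(x_\star))$. Combining with $\E\norm{g_t}^2 \le 2\beta(f(x'_t)-f(x_\star)) + \sigma^2$ and the step-size choice (using $\eta \le \ifrac{1}{(16\beta)}$ to absorb the $\eta^2\beta$ term and $\eta \le \ifrac{\epsilon}{(8\sigma^2)}$ to absorb $\eta^2\sigma^2$), every update decreases $\E D_t$ by at least $\tfrac34 \eta (f(x'_t) - f(x_\star)) \ge \tfrac34\eta\epsilon$. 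Hence $\Delta_t = D_t - D_{t+1}$ satisfies $\E\Delta_t \ge 0$ and telescopes to $\sum_{t=1}^T \E\Delta_t \le D_1 \le F^2$.

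From here the skeleton is identical to the non-convex proof and splits into two regimes according to which branch of the step size is active. The update-counting lemma (\cref{lem:update-lower-bound}) transfers verbatim, since its proof only uses that $x_t = x'_t$ forces the test to pass and is otherwise indifferent to the threshold value or to convexity; so conditioned on failure the algorithm makes at least $\ifrac{T}{4(\tau+1)}$ updates, each contributing $\E\Delta_t \ge \tfrac34\eta\epsilon$. In the low-noise regime, where the step size saturates at $\eta = \ifrac{1}{(16\beta)}$, this already gives $F^2 \ge \ifrac{T}{4(\tau+1)} \cdot \tfrac34\eta\epsilon$ and hence $T = O(\beta F^2 (\tau+1)/\epsilon)$. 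In the high-noise regime $\eta = \ifrac{\epsilon}{(8\sigma^2)}$ I would reprove the staleness bound (\cref{lem:staleness-bound}) with $\Delta_i$ now the distance-decrease: the noise part contributes a term of order $\sqrt{\sum_i \E\Delta_i}$ (the step size makes its prefactor $\ifrac{\eta\sigma^2}{\epsilon}$ a constant), and for the deterministic part I would bound $\norm{\nabla f(x'_i)} \le \sqrt{2\beta(f(x'_i)-f(x_\star))} \le \sqrt{\ifrac{2\beta}{\epsilon}}\,(f(x'_i)-f(x_\star))$ (using $f(x'_i)-f(x_\star) > \epsilon$) and then convert $f(x'_i)-f(x_\star)$ into $\E\Delta_i$ via the descent bound, giving a term linear in $\sum_i \E\Delta_i$. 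The corollary (\cref{corr:descent-guarantee}) and its Markov argument then carry over unchanged: in each of the at least $\ifrac T2$ steps with $d_t \le 2\tau$, either $\E\Delta_t \gtrsim \ifrac{\epsilon^2}{\sigma^2}$ or the recent accumulated decrease is large, yielding $\sum_{t=1}^T \E\Delta_t \gtrsim T \min\set{\ifrac{\epsilon}{(\beta\tau)},\, \ifrac{\epsilon^2}{\sigma^2}}$ and hence the claimed bound on $T$.

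I expect the main obstacle to be the deterministic part of the staleness bound in the high-noise regime. In the non-convex case $\Delta_t$ \emph{is} the function drop and $\norm{\nabla f(x'_i)}$ relates to it cleanly and linearly through \cref{lem:loacl_improvement}; here $\Delta_t$ is only a distance drop and the smoothness bound gives $\norm{\nabla f(x'_i)} \sim \sqrt{f(x'_i)-f(x_\star)}$, so the linearization step that replaces $\sqrt{f(x'_i)-f(x_\star)}$ by $(f(x'_i)-f(x_\star))/\sqrt\epsilon$ (legitimate only because $f(x'_i)-f(x_\star) > \epsilon$ under failure) is exactly what keeps the bound linear in $\sum_i \E\Delta_i$. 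Getting all the constants---the threshold $\sqrt{\ifrac{\epsilon}{(8\beta)}}$, the two branches of $\eta$, and the Markov slack---to line up so that the two cases of the corollary reproduce the clean $\min\set{\ifrac{\epsilon}{(\beta\tau)}, \ifrac{\epsilon^2}{\sigma^2}}$ is the delicate bookkeeping, but no idea beyond the non-convex template should be needed. One should also double-check, as in the proof of \cref{lem:staleness-bound}, that the noise terms $N_i$ remain uncorrelated with the update indicators $\ind_i$ and with past noise so that the second-moment computation is valid.
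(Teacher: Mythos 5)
Your proposal is correct and follows essentially the same route as the paper's own proof: the same squared-distance potential $\Delta_t = \norm{x_t-x_\star}^2 - \norm{x_{t+1}-x_\star}^2$, the same frozen-on-success variant, the same descent lemma obtained by splitting $\nabla f(x'_t)\dotp(x_t-x_\star)$ and combining convexity with the self-bounding inequality $\norm{\nabla f(x')}^2 \le 2\beta(f(x')-f(x_\star))$, verbatim reuse of the update-counting lemma for the low-noise case, and the same staleness bound whose gradient term is linearized via $\sqrt{f(x'_i)-f(x_\star)} \le (f(x'_i)-f(x_\star))/\sqrt{\epsilon}$ followed by the identical Markov-based corollary and summation over steps with $d_t \le 2\tau$. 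The only (cosmetic) differences are that you absorb both step-size branches uniformly where the paper splits into the cases $\sigma^2 \lessgtr 2\beta\epsilon$, and your constants track the factor of $2$ in the cross term of the squared-distance expansion, so they differ slightly from the paper's.
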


Note that we ensure the success of the algorithm only with probability $\frac12$, but our guarantee can be easily converted to high probability in the same manner as done for \cref{alg:sgd-with-delays}.

\subsection{Analysis}

The analysis proceeds similarly to that of \cref{thm:main}.
We analyze a variant of the algorithm that makes an update if $\norm{x_t-x'_t} > \sqrt{\ifrac{\epsilon}{8 \beta}}$ or $f(x_t) - f(x_\star) \le \epsilon$ then $x_{t+1} = x_t$. Else, $x_{t+1} = x_t - \eta g_t$.
As in the proof of \cref{thm:main}, this variant is impossible to implement, but the guarantee of \cref{thm:convex-main} is valid for this variant if and only if it is valid for the original algorithm.

We next introduce a bit of additional notation. 
We denote by $\ind_t$ the indicator of event that the algorithm performed an update at time $t$. 
Namely, 
$$
    I_t 
    = 
    \ind\set!{ \| x_t-x'_t \| \le \sqrt{\ifrac{\epsilon}{8\beta}} ~\;\text{and}\; f(x_t)- f(x_\star) > \epsilon}
    .
$$
Note that $\ind_t=1$ implies that $f(x_s)- f(x_\star) \ge \epsilon$ for all $s =
1,\ldots,t$. Further, we denote by $\Delta_t = \norm{x_t - x_\star}^2 - \norm{x_{t+1} - x_\star}^2$ the
improvement at time $t$. 
Since we assume $\norm{x_1 - x_\star} \le F$, we have
that for all $t$, 
$$ 
    \sum_{i=1}^t \Delta_i 
    = 
    \norm{x_1 - x_\star}^2 - \norm{x_{t+1} - x_\star}^2
    \leq 
    F^2
    .
$$

Moreover, towards the proof we need the following that holds for any $\beta$-smooth convex function  \citep[see, e.g.,][]{nesterov2018lectures}:
\begin{equation} \label{eq:convex-fact}
    \frac{1}{2\beta} \norm{\nabla f(x)}^2 \le f(x) - f(x_\star) \le \nabla f(x) \dotp \brk{f(x)-f(x_\star)}
    ,
    \quad \text{for any } x \in \reals^d.
\end{equation}


The following lemma is an analog of \cref{lem:loacl_improvement} which shows that whenever the algorithm makes an update, the squared distance to the optimum $\norm{x_t-x_\star}^2$ decreases significantly.

\begin{lemma} \label{lem:convex-loacl_improvement}
    Let $N\in \reals^d$ be a random zero-mean vector with $\E\norm{N}^2 \le \sigma^2$. Fix $x,x'\in \reals^d$ with $\norm{x-x'} \le \ifrac{\epsilon}{4 \beta}$ and $f(x') - f(x_\star) > \epsilon$. 
    Then,
    \begin{align*}
        \E\brk[s]{\norm{x - \eta \brk{\nabla f(x') + N} - x_\star}^2 - \norm{x-x_\star}^2}
        \le 
        -\frac{\eta}{2} \norm{\nabla f(x')}^2
        + 
        \frac{\eta^2}{2} \brk{\sigma^2 + \norm{\nabla f(x')}^2}
        .
    \end{align*}
    In particular, for our choice of $\eta$, we have
    \begin{align} \label{eq:convex-descent}
        \E\brk[s]{\norm{x - \eta \brk{\nabla f(x') + N} - x_\star}^2 - \norm{x-x_\star}^2}
        \le
        -\frac{\eta}{4} \brk{f(x') - f(x_\star)}
        .
    \end{align}
\end{lemma}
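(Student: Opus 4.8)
The plan is to run the textbook one-step analysis of stochastic gradient descent, but with the potential $\norm{x-x_\star}^2$ in place of the function value, mirroring the role that \cref{lem:loacl_improvement} plays in the non-convex analysis. First I would expand the square: writing $v=\nabla f(x')+N$, the exact identity
\begin{align*}
    \norm{x-\eta v-x_\star}^2 - \norm{x-x_\star}^2
    =
    -2\eta\,(x-x_\star)\dotp v + \eta^2\norm{v}^2
\end{align*}
holds, and taking expectation over $N$ (zero-mean and independent of $x,x'$, with $\E\norm{N}^2\le\sigma^2$) removes the linear-in-$N$ term and replaces $\E\norm{v}^2$ by $\norm{\nabla f(x')}^2+\E\norm{N}^2\le\norm{\nabla f(x')}^2+\sigma^2$. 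This yields a one-step bound of the form displayed in the lemma, with the variance entering through the $\sigma^2$ term; here the trivial smoothness of $x\mapsto\norm{x-x_\star}^2$ plays the part that $\beta$-smoothness of $f$ played in \cref{lem:loacl_improvement}.

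The substance is in turning the linear term $-2\eta\,(x-x_\star)\dotp\nabla f(x')$ into something proportional to $f(x')-f(x_\star)$, so that \cref{eq:convex-descent} can be read off. I would split $x-x_\star=(x'-x_\star)+(x-x')$ and handle the two pieces separately. For the first piece, convexity (the right inequality in \cref{eq:convex-fact}) gives $(x'-x_\star)\dotp\nabla f(x')\ge f(x')-f(x_\star)$, while the left inequality $\norm{\nabla f(x')}^2\le 2\beta\brk{f(x')-f(x_\star)}$ is what I will use later to reabsorb the quadratic term. For the second, staleness, piece I would use Cauchy--Schwarz, $(x-x')\dotp\nabla f(x')\ge-\norm{x-x'}\,\norm{\nabla f(x')}$. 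Altogether this produces
\begin{align*}
    \E\brk[s]{\norm{x-\eta v-x_\star}^2-\norm{x-x_\star}^2}
    \le
    -2\eta\brk{f(x')-f(x_\star)}
    + 2\eta\norm{x-x'}\norm{\nabla f(x')}
    + \eta^2\brk{\norm{\nabla f(x')}^2+\sigma^2}.
\end{align*}

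The main obstacle --- exactly as in the non-convex lemma --- is the staleness cross-term $2\eta\norm{x-x'}\norm{\nabla f(x')}$, which appears precisely because the gradient is evaluated at the stale iterate $x'$ rather than at $x$. This is where the proximity threshold and the smoothness bound combine: substituting $\norm{\nabla f(x')}\le\sqrt{2\beta\brk{f(x')-f(x_\star)}}$ and the closeness hypothesis on $\norm{x-x'}$, the cross-term collapses to a quantity of order $\eta\sqrt{\epsilon\brk{f(x')-f(x_\star)}}$, and AM--GM together with the standing assumption $f(x')-f(x_\star)>\epsilon$ bounds it by $\eta\brk{f(x')-f(x_\star)}$, cancelling half of the leading $-2\eta\brk{f(x')-f(x_\star)}$. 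It then remains to absorb the two quadratic terms into the surviving $-\eta\brk{f(x')-f(x_\star)}$, and this is exactly what the two regimes of $\eta=\min\brk[c]{\ifrac{1}{16\beta},\ifrac{\epsilon}{8\sigma^2}}$ are tuned for: the first regime makes $\eta^2\norm{\nabla f(x')}^2\le 2\beta\eta^2\brk{f(x')-f(x_\star)}\le\tfrac18\eta\brk{f(x')-f(x_\star)}$, and the second makes $\eta^2\sigma^2\le\tfrac18\eta\epsilon\le\tfrac18\eta\brk{f(x')-f(x_\star)}$. Collecting the terms leaves at most $-\tfrac{\eta}{4}\brk{f(x')-f(x_\star)}$, which is \cref{eq:convex-descent}. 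The one delicate point is the calibration of the proximity threshold against the smoothness bound so that the cross-term loses only a factor of order $\sqrt\epsilon$ (rather than $\epsilon$) relative to $f(x')-f(x_\star)$ --- which is why the natural scale for the threshold is $\sqrt{\ifrac{\epsilon}{\beta}}$.
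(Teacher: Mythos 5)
Your proposal is correct and follows essentially the same route as the paper's own proof: expand the square, let the zero-mean noise term vanish in expectation, split $x-x_\star=(x'-x_\star)+(x-x')$, apply both inequalities of \cref{eq:convex-fact} together with Cauchy--Schwarz on the staleness cross-term, and use the two constraints encoded in $\eta=\min\{1/(16\beta),\,\epsilon/(8\sigma^2)\}$ to absorb the quadratic terms (your unified treatment via the min is marginally cleaner than the paper's two-case analysis, and you also retain the factor $2$ in the inner-product expansion that the paper drops). Note that, exactly like the paper's proof, your argument requires the proximity bound $\norm{x-x'}\le\sqrt{\epsilon/(8\beta)}$ (the algorithm's actual test in \cref{ln:convex-test}) rather than the $\epsilon/(4\beta)$ written in the lemma's hypothesis --- a typo in the statement that your closing remark about the natural $\sqrt{\epsilon/\beta}$ scale correctly identifies.
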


\begin{proof}
    We have,
    \begin{align*}
        \norm{x - \eta \brk{\nabla f(x') + N} - x_\star}^2 - \norm{x-x_\star}^2
        =
        -\eta \brk{\nabla f(x') + N} \dotp (x-x_\star)
        + 
        \eta^2 \norm{\nabla f(x') + N}^2
        .
    \end{align*}
    Taking expectation over $N$ we get
    \begin{align*}
        \E\brk[s]{\norm{x - \eta \brk{\nabla f(x') + N} - x_\star}^2 - \norm{x-x_\star}^2}
        &\leq
        -\eta \nabla f(x') \dotp (x-x_\star)
        + 
        \eta^2 \brk{\norm{\nabla f(x')}^2 + \sigma^2}
        \\
        &=
        -\eta \nabla f(x') \dotp (x'-x_\star)
        +
        \eta \nabla f(x') \dotp (x'-x)
        + 
        \eta^2 \norm{\nabla f(x')}^2 
        + 
        \eta^2 \sigma^2
        \\
        &\le
        -\eta \nabla f(x') \dotp (x'-x_\star)
        +
        \eta \norm{\nabla f(x')} \norm{x'-x}
        + 
        \eta^2 \norm{\nabla f(x')}^2 
        + 
        \eta^2 \sigma^2
        \\
        &\leq
        -\eta \brk{f(x') - f(x_\star)}
        +
        \eta \sqrt{2 \beta \brk{f(x') - f(x_\star)}} \, \norm{x'-x} \\
        &\qquad + 
        2 \beta \eta^2 \brk{f(x') - f(x_\star)}
        + 
        \eta^2 \sigma^2
        ,
    \end{align*}
    by using \cref{eq:convex-fact}.
    
    Now, since $\epsilon \le f(x') - f(x_\star)$,
    $$
        \|x-x'\| \leq \sqrt{\frac{\epsilon}{8 \beta}} 
        \leq 
        \sqrt{\frac{f(x') - f(x_\star)}{8\beta}},
    $$ 
    and we have
    \begin{align*}
        \E\brk[s]{\norm{x - \eta \brk{\nabla f(x') + N} - x_\star}^2 - \norm{x-x_\star}^2}
        \le 
        -\frac12 \eta \brk{f(x') - f(x_\star)}
        + 
        2 \beta \eta^2 \brk{f(x') - f(x_\star)}
        + 
        \eta^2 \sigma^2
        .
    \end{align*}
    
    If $2 \beta \epsilon \ge \sigma^2$ then $\sigma^2 \le 2 \beta (f(x')-f(x_\star))$. This, with $\eta = \ifrac{1}{16\beta}$, yields \cref{eq:convex-descent}.
    If $2 \beta \epsilon < \sigma^2$ and $\eta = \ifrac{\epsilon}{8 \sigma^2}$. Plugging that in instead and using $f(x')-f(x_\star) \ge \epsilon$, gets us \cref{eq:convex-descent}.
\end{proof}

Note that by \cref{lem:loacl_improvement} we have that $\E\Delta_t \ge 0$. 

\subsubsection{Case (i): \texorpdfstring{${\sigma^2\le 2 \beta \epsilon}$}{}}

We now handle the ``low noise'' and ``high noise'' regimes differently, beginning with the ``low noise'' regime. 
Recall that by \cref{lem:update-lower-bound} the algorithm makes at least $\Omega (\ifrac{T}{\tau})$ updates.
This yields the following.

\begin{lemma} \label{lem:convex-no-noise}
Suppose that $\sigma^2 \le 2 \beta \epsilon$ and the algorithm fails with probability $\geq \tfrac12$. 
Then $T \le \ifrac{512 \beta F^2 (\tau +1)}{\epsilon}$.
\end{lemma}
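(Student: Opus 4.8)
The plan is to mirror the argument used to prove \cref{lem:no-noise} in the non-convex case, substituting the Euclidean potential $\Delta_t = \norm{x_t - x_\star}^2 - \norm{x_{t+1}-x_\star}^2$ for the function-value decrease and invoking the convex descent guarantee \cref{eq:convex-descent} in place of \cref{eq:descent}. The two ingredients are: (i) a lower bound on the number of updates the algorithm performs when it fails, which is exactly \cref{lem:update-lower-bound} and transfers verbatim to \cref{alg:sgd-convex-with-delays}; and (ii) a per-update lower bound on $\E\Delta_t$ coming from \cref{lem:convex-loacl_improvement}. Combining these with the telescoping bound $\sum_{t=1}^T \E\Delta_t \le F^2$ will pin down $T$.

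Concretely, first I would observe that whenever $\ind_t = 1$ the stale point $x'_t = x_{t-d_t}$ satisfies $f(x'_t) - f(x_\star) \ge \epsilon$: indeed $\ind_t = 1$ forces $f(x_s) - f(x_\star) \ge \epsilon$ for every $s \le t$, and in particular for $s = t - d_t$. Hence \cref{eq:convex-descent} applied at the update step gives $\E[\Delta_t \mid \ind_t = 1] \ge \tfrac{\eta}{4}\brk{f(x'_t)-f(x_\star)} \ge \tfrac{\eta}{4}\epsilon$, so summing and using non-negativity of $\Delta_t$,
\[
    F^2 \ge \sum_{t=1}^T \E\Delta_t \ge \frac{\eta\epsilon}{4}\sum_{t=1}^T \E\ind_t .
\]
Next, conditioning on the failure event (of probability $\ge \tfrac12$) and applying \cref{lem:update-lower-bound}, which guarantees at least $T/4(\tau+1)$ updates on failure, yields $\sum_{t=1}^T \E\ind_t \ge \tfrac12 \cdot \tfrac{T}{4(\tau+1)}$. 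Plugging in the low-noise step size $\eta = \ifrac{1}{16\beta}$ (which is the active branch of the $\min$ precisely because $\sigma^2 \le 2\beta\epsilon$) and rearranging gives $T \le \ifrac{512\beta F^2(\tau+1)}{\epsilon}$, as claimed.

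The main point requiring care is the justification that \cref{lem:update-lower-bound} applies unchanged: its proof only uses the delay schedule and the fact that an update is forced whenever $x_t = x'_t$ (distance zero trivially passes the threshold test in \cref{ln:convex-test}), both of which are identical for \cref{alg:sgd-convex-with-delays}. The remaining subtlety is purely in the bookkeeping of the conditional expectations --- in particular, that the descent guarantee is stated at the stale point $x'_t$ rather than at $x_t$, so one must appeal to the monotonicity property of the stopping variant (that $\ind_t=1$ propagates the condition $f(x_s)-f(x_\star) \ge \epsilon$ back to $s = t-d_t$) in order to lower bound $f(x'_t)-f(x_\star)$ by $\epsilon$.
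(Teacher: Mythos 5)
Your proof is correct and follows essentially the same route as the paper's: telescoping $\sum_t \E\Delta_t \le F^2$, lower-bounding each update's expected progress by $\tfrac{\eta\epsilon}{4}$ via \cref{eq:convex-descent} (with $\eta = \ifrac{1}{16\beta}$ active since $\sigma^2 \le 2\beta\epsilon$), and counting updates on the failure event with \cref{lem:update-lower-bound}, yielding the identical constant $512$. Your extra care about the descent guarantee being stated at the stale point $x'_t$ (handled by the propagation property of the stopping variant) is a valid reading; the paper glosses over this by writing $f(x_t)-f(x_\star)$, but either quantity is $\ge \epsilon$ under $\ind_t = 1$, so the arguments coincide.
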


\begin{proof}
    Combining \cref{lem:update-lower-bound,lem:convex-loacl_improvement} we get that if the algorithm fails with probability $\ge \tfrac12$ then
    \begin{align*}
        F^2
        &\ge 
        \sum_{t=1}^T\E \Delta_t
        \\
        &\ge 
        \frac{1}{64 \beta} \E \brk[s]4{\sum_{t=1}^T \ind_t \brk{f(x_t) - f(x_\star)}} \\
        &\ge 
        \frac{1}{128 \beta}\E \brk[s]4{\sum_{t=1}^T \ind_t \brk{f(x_t) - f(x_\star)} \;\Bigg|\; \text{algorithm fails}}
        \\
        &\ge 
        \frac{\epsilon}{128 \beta} \E \brk[s]4{\sum_{t=1}^T  \ind_t \;\Bigg|\; \text{algorithm fails}}
        \\
        &\ge 
        \frac{\epsilon}{128 \beta}\frac{T}{4(\tau + 1)}. \qedhere
    \end{align*}
\end{proof}

\subsubsection{Case (ii): \texorpdfstring{${\sigma^2 > 2 \beta \epsilon}$}{}}

This is the ``high noise'' regime.
For this case, we prove the following guarantee for the convergence of our algorithm.

\begin{lemma} \label{lem:convex-improvements-tradeoff}
    Assume that $\sigma^2 > 2 \beta \epsilon$ and the algorithm fails with probability $\ge \tfrac12$.
    Then, 
    $$
        \sum_{t=1}^{T}\E \Delta_t 
        \ge 
        \frac{T}{1600} \min\brk[c]3{\frac{\epsilon}{\tau \beta}, \frac{\epsilon^2}{\sigma^2}}
        .
    $$
    In particular, 
    $$
        T 
        \le 
        1600 F^2 \brk3{
        \frac{\tau \beta}{\epsilon}
        + 
        \frac{\sigma^2}{\epsilon^2}}
        .
    $$
\end{lemma}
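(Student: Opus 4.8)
The plan is to transcribe the high-noise analysis of the non-convex case (\cref{lem:staleness-bound}, \cref{corr:descent-guarantee}, and \cref{lem:improvements-tradeoff}) into the convex potential $\Delta_t = \norm{x_t - x_\star}^2 - \norm{x_{t+1}-x_\star}^2$, using \cref{eq:convex-descent} everywhere in place of \cref{eq:descent}. Three ingredients are needed: a \emph{staleness bound} controlling $\E\norm{x_t - x_{t+k}}$ by the accumulated potential drop $\sum_i \E\Delta_i$; a \emph{descent corollary} showing that a small accumulated drop over $[t-d_t,t-1]$ forces a large drop at step $t$; and a final \emph{double-counting} step summing the resulting trade-off over all $t$ with $d_t \le 2\tau$. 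Throughout we are in the regime $\sigma^2 > 2\beta\epsilon$, so $\eta = \ifrac{\epsilon}{8\sigma^2}$.

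First I would prove the convex analog of \cref{lem:staleness-bound}. Writing the update as $x_{t+k} - x_t = -\eta \sum_{i=t}^{t+k-1} \ind_i\brk{\nabla f(x'_i) + N_i}$, I split $\E\norm{x_t - x_{t+k}}$ into a gradient part and a noise part. For the noise part, conditional unbiasedness of $N_i$ makes the cross terms vanish exactly as before, giving $\E\norm{\sum \ind_i N_i} \le \sigma\sqrt{\E\sum\ind_i}$; since $\ind_i = 1$ forces $f(x'_i)-f(x_\star) > \epsilon$, \cref{eq:convex-descent} yields $\E\ind_i \le (\ifrac{4}{\eta\epsilon})\E\Delta_i$, which controls this term by $\sum_i \E\Delta_i$ (with $\eta = \ifrac{\epsilon}{8\sigma^2}$ the $\sigma^2$ factors cancel cleanly). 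For the gradient part I would use the smoothness bound $\norm{\nabla f(x'_i)}^2 \le 2\beta(f(x'_i)-f(x_\star))$ from \cref{eq:convex-fact}, combined via Cauchy--Schwarz with the two estimates $\E\ind_i \le (\ifrac{4}{\eta\epsilon})\E\Delta_i$ and $\E[\ind_i\norm{\nabla f(x'_i)}^2] \le (\ifrac{8\beta}{\eta})\E\Delta_i$ (both from \cref{eq:convex-descent}), to bound $\sum_i\E[\ind_i \norm{\nabla f(x'_i)}]$ and hence the gradient part again by $\sum_i \E\Delta_i$. The outcome is a bound of the shape $\E\norm{x_t - x_{t+k}} \le c_1 \sqrt{\sum_i \E\Delta_i} + c_2 \sum_i \E\Delta_i$.

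Next, mimicking \cref{corr:descent-guarantee}, I would choose a threshold $\rho \asymp \ifrac{\epsilon}{\beta}$ such that $\sum_{i=t-d_t}^{t-1}\E\Delta_i < \rho$ forces $\E\norm{x_t - x'_t}$ below a quarter of the update threshold $\sqrt{\ifrac{\epsilon}{8\beta}}$ via the staleness bound. By Markov the distance test of \cref{alg:sgd-convex-with-delays} then passes with probability $\ge \tfrac34$, and since on failure (probability $\ge \tfrac12$) we have $f(x'_t)-f(x_\star) > \epsilon$, we obtain $\E\ind_t \ge \tfrac14$; \cref{eq:convex-descent} then gives $\E\Delta_t \ge \tfrac14\cdot\tfrac{\eta\epsilon}{4} = \ifrac{\epsilon^2}{(128\sigma^2)}$. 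Exactly as in the non-convex case this produces the per-step trade-off $\E\Delta_t + \tfrac{1}{2\tau}\sum_{i=t-d_t}^{t-1}\E\Delta_i \ge \tfrac{1}{C}\min\set{\ifrac{\epsilon}{(\tau\beta)}, \ifrac{\epsilon^2}{\sigma^2}}$ for a universal constant $C$.

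Finally, summing this trade-off over the at-least-$\ifrac{T}{2}$ indices with $d_t\le 2\tau$ (guaranteed by Markov on the delays), together with the telescoping inequality $\sum_{t=1}^T \E\Delta_t \ge \sum_{t:d_t\le 2\tau}\tfrac{1}{2\tau}\sum_{i=t-d_t}^{t-1}\E\Delta_i$ used in \cref{lem:improvements-tradeoff}, yields $\sum_{t=1}^T\E\Delta_t \ge \tfrac{T}{1600}\min\set{\ifrac{\epsilon}{(\tau\beta)}, \ifrac{\epsilon^2}{\sigma^2}}$; combined with $\sum_t \E\Delta_t \le F^2$ this gives the stated bound on $T$. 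I expect the main obstacle to be the gradient term in the staleness bound: unlike the non-convex case, where $\ind_i = 1$ supplies the lower bound $\norm{\nabla f(x'_i)} \ge \epsilon$ that converts $\norm{\nabla f}$ into $\norm{\nabla f}^2$ directly, here the lower bound is on $f(x'_i)-f(x_\star)$ and the smoothness relation $\norm{\nabla f}^2 \le 2\beta(f - f_\star)$ runs the wrong way. Routing around this through Cauchy--Schwarz (controlling $\sum\E\ind_i$ and $\sum\E[\ind_i\norm{\nabla f(x'_i)}^2]$ separately) is the crux, and tracking the constants so that the comparison against a quarter of $\sqrt{\ifrac{\epsilon}{8\beta}}$ actually closes is the delicate bookkeeping.
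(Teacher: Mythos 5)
Your proposal is correct and follows essentially the same route as the paper's proof: the same staleness bound controlling $\E\norm{x_t-x_{t+k}}$ by $\sum_i \E\Delta_i$, the same Markov-plus-union-bound descent corollary with threshold of order $\epsilon/\beta$ (the paper uses $\epsilon/(400\beta)$, giving $\E\Delta_t \ge \epsilon^2/(128\sigma^2)$ exactly as you compute), and the same double-counting sum over the at-least-$T/2$ steps with $d_t \le 2\tau$. The only cosmetic difference is in the gradient term of the staleness bound, where you route through Cauchy--Schwarz via $\E[\ind_i\norm{\nabla f(x'_i)}] \le \sqrt{\E\ind_i\cdot\E[\ind_i\norm{\nabla f(x'_i)}^2]}$, whereas the paper uses the pointwise inequality $\norm{\nabla f(x'_i)} \le \sqrt{2\beta(f(x'_i)-f(x_\star))} \le \sqrt{2\beta/\epsilon}\,(f(x'_i)-f(x_\star))$ (valid since $f(x'_i)-f(x_\star) \ge \epsilon$ when $\ind_i = 1$); both give the identical constant $\sqrt{32\beta/\epsilon}$, so your anticipated ``crux'' resolves just as cleanly either way.
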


Similarly to \cref{lem:improvements-tradeoff}, this result is attained by showing that either $\E \Delta_t$ is large, or $\sum_{i=t-d_t}^{t-1} \E \Delta_i$ is large.
To prove the claim, we first upper bound the distance $\norm{x_t-x'_t}$ in terms of $\sum_{i=t-d_t}^{t-1} \E \Delta_i$, as shown by the following lemma.
    
\begin{lemma} \label{lem:convex-staleness-bound}
For all $t$ and $k$, it holds that
    \[
        \E \norm{x_{t} - x_{t+k}}
        \le 
        \sqrt{\sum_{i=t}^{t+k-1} \E \Delta_i} 
        + 
        \sqrt{\frac{32 \beta}{\epsilon}} \sum_{i=t}^{t+ k - 1} \E \Delta_i
        .
    \]
\end{lemma}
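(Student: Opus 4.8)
The plan is to mirror the proof of \cref{lem:staleness-bound}, replacing the role of the non-convex descent inequality with its convex counterpart \cref{eq:convex-descent}, and the gradient-size control with the convex fact \cref{eq:convex-fact}. Writing the update telescopically as $x_{t+k}-x_t = -\eta\sum_{i=t}^{t+k-1}\ind_i\brk{\nabla f(x'_i)+N_i}$, I would first split by the triangle inequality,
\[
    \E\norm{x_t-x_{t+k}}
    \le
    \eta\,\E\norm4{\sum_{i=t}^{t+k-1}\ind_i\nabla f(x'_i)}
    +
    \eta\,\E\norm4{\sum_{i=t}^{t+k-1}\ind_i N_i}
    ,
\]
and then bound the two terms separately: the gradient term will produce the linear term $\sqrt{32\beta/\epsilon}\sum_i\E\Delta_i$, and the noise term the square-root term $\sqrt{\sum_i\E\Delta_i}$.

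For the noise term I would pass to the second moment and use that the cross terms vanish because $\E\brk[s]{N_i\mid\ind_i,\ind_j,N_j}=0$ for $i>j$ (the fresh noise $N_i$ is drawn only at step $i$, after the indicators and earlier noises are determined), giving $\E\norm{\sum_i\ind_i N_i}\le\sigma\sqrt{\E\sum_i\ind_i}$. Since $\ind_i=1$ forces $f(x'_i)-f(x_\star)>\epsilon$, I can write $\ind_i\le\epsilon^{-1}\ind_i\brk{f(x'_i)-f(x_\star)}$, and then \cref{eq:convex-descent} bounds $\E\brk[s]{\ind_i\brk{f(x'_i)-f(x_\star)}}\le(4/\eta)\E\Delta_i$. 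Substituting the high-noise step size $\eta=\epsilon/(8\sigma^2)$ collapses the prefactors and yields the desired $\sqrt{\sum_i\E\Delta_i}$ (up to a harmless factor of $1/\sqrt2$).

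For the gradient term the key new ingredient is \cref{eq:convex-fact}, which gives $\norm{\nabla f(x'_i)}\le\sqrt{2\beta\brk{f(x'_i)-f(x_\star)}}$ in place of the Lipschitz-gradient estimate used in the non-convex proof. On the event $\ind_i=1$ we have $f(x'_i)-f(x_\star)>\epsilon$, so $\sqrt{f(x'_i)-f(x_\star)}\le\epsilon^{-1/2}\brk{f(x'_i)-f(x_\star)}$, which linearizes the bound: $\ind_i\norm{\nabla f(x'_i)}\le\sqrt{2\beta/\epsilon}\,\ind_i\brk{f(x'_i)-f(x_\star)}$. Applying \cref{eq:convex-descent} once more and again substituting $\eta$ makes $\eta$ cancel and produces exactly $4\sqrt{2\beta/\epsilon}\sum_i\E\Delta_i=\sqrt{32\beta/\epsilon}\sum_i\E\Delta_i$.

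The main obstacle I anticipate is the noise term rather than any of the algebra: one must argue carefully that the history-dependent indicators $\ind_i$ can be carried through the orthogonality computation, i.e., that conditioning on the (random) indicators does not destroy the zero-mean property of the fresh noise $N_i$. Once that conditional-independence structure is in place, the remaining steps are routine substitutions, and both the clean square-root term and the precise constant $\sqrt{32\beta/\epsilon}$ fall out of plugging in $\eta=\epsilon/(8\sigma^2)$.
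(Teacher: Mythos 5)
Your proposal is correct and follows essentially the same route as the paper's proof: the identical telescoping/triangle-inequality split, the same orthogonality argument for the noise term (with the conditional zero-mean property $\E[N_i \mid \ind_i,\ind_j,N_j]=0$ for $i>j$), the same use of \cref{eq:convex-fact} to linearize the gradient term via $\ind_i\norm{\nabla f(x'_i)} \le \sqrt{2\beta/\epsilon}\,\ind_i\brk{f(x'_i)-f(x_\star)}$, and the same invocation of \cref{eq:convex-descent} with $\eta = \epsilon/(8\sigma^2)$, yielding the identical constants (including the harmless slack in the noise term, where the paper loosens $\sqrt{32}\le 8$ just as you absorb the factor $1/\sqrt{2}$).
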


\begin{proof}
    We have 
    \begin{align*}
        \E \|x_{t} - x_{t+k}\| 
        = 
        \eta\E \norm4{\sum_{i=t}^{t+ k- 1 } \ind_i \brk{\nabla f(x'_i) + N_i}}
        \le 
        \eta\E \norm4{\sum_{i=t}^{t+ k - 1} \ind_i\nabla f(x'_i)}
        +
        \eta \E \norm4{\sum_{i=t}^{t+ k- 1} \ind_i N_i}
        .
    \end{align*}
    We continue bounding the second term above as follows:
    \begin{align*}
        \E \left\| \sum_{i=t}^{t+ k- 1} \ind_i N_i\right\| 
        &\le
        \sqrt{\E \left\| \sum_{i=t}^{t+ k- 1 } \ind_i N_i\right\|^2} 
        \\
        &= 
        \sqrt{\E  \sum_{i=t}^{t+ k- 1 }\sum_{j=t}^{t+ k- 1 } \ind_i \ind_j N_i \cdot N_j} 
        \\
        &= 
        \sqrt{\E  \sum_{i=t}^{t+ k- 1} \ind_i \|N_i\|^2} 
        \tag{$\E[N_i \mid \ind_i, \ind_j,N_j] = 0$ for $i>j$}
        \\
        &\le 
        \sqrt{\sigma^2 \E  \sum_{i=t}^{t+ k- 1}  \ind_i}
        \\
        &\le 
        \sqrt{\frac{\sigma^2}{\epsilon} \E  \sum_{i=t}^{t+ k- 1}  \ind_i \brk{f(x'_i) - f(x_\star)}}
        \tag{$f(x'_i) - f(x_\star) \ge \epsilon$ when $\ind_i = 1$}
        \\
        &\le 
        \sqrt{\frac{\sigma^2}{\epsilon} \cdot \frac{32 \sigma^2}{\epsilon} \, \sum_{i=t}^{t+ k- 1} \E \Delta_i}
        \tag{\cref{eq:convex-descent}}
        \\
        &\le
        \frac{8 \sigma^2}{\epsilon}  \sqrt{\sum_{i=t}^{t+ k- 1} \E \Delta_i}
        \\
        &=
        \frac{1}{\eta} \sqrt{\sum_{i=t}^{t+ k- 1} \E \Delta_i}
        \tag{$\eta = \ifrac{\epsilon}{8 \sigma^2}$}
        ,
    \end{align*}
    and
    \begin{align*}
        \E \norm4{\sum_{i=t}^{t+ k- 1} \ind_i\nabla f(x'_i)}
        &\le 
        \sum_{i=t}^{t+ k- 1} \E \ind_i \norm{\nabla f(x'_i)}
        \\
        &\le 
        \sum_{i=t}^{t+ k- 1} \E \ind_i \sqrt{2 \beta \brk{f(x'_i) - f(x_\star)}}
        \tag{$f$ convex and $\beta$-smooth}
        \\
        &\le 
        \sqrt{\frac{2 \beta}{\epsilon}} \sum_{i=t}^{t+ k- 1} \E \ind_i \brk{f(x'_i) - f(x_\star)}
        \tag{$f(x'_i) - f(x_\star) \ge \epsilon$ when $\ind_i = 1$}
        \\
        &\le
        \sqrt{\frac{32 \beta}{\epsilon}} \cdot \frac{1}{\eta} \sum_{i=t}^{t+ k- 1} \E \Delta_i
        \tag{\cref{eq:convex-descent}}
        .
    \end{align*}
    This completes the proof.
\end{proof}

Given the lemma above, it is now clear that if $\sum_{i=t-d_t}^{t-1} \E \Delta_i$ is sufficiently small, then $\E \norm{x_t - x'_t} \ll \sqrt{\frac{\epsilon}{\beta}}$ which means that the algorithm is likely (with constant probability) to take a step at time $t$.
This argument yields the following.

\begin{corollary} \label{corr:convex-descent-guarantee}
    Assume that the algorithm fails with probability $\ge \tfrac12$.
    If
    $
        \sum_{i=t-d_t}^{t-1}\E \Delta_i 
        < 
        \ifrac{\epsilon}{400 \beta}
    $ 
    then 
    $ 
        \E \Delta_t 
        \ge 
        \ifrac{\epsilon^2}{128 \sigma^2}
        .
    $
    In particular,
    $$
        \E \Delta_t  + \frac{1}{2 \tau}\sum_{i=t-d_t}^{t-1}\E \Delta_i 
        \ge 
        \frac{1}{800} \min\brk[c]3{\frac{\epsilon}{\tau \beta}, \frac{\epsilon^2}{\sigma^2}}
        .
    $$
\end{corollary}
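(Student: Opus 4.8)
The plan is to mirror the proof of \cref{corr:descent-guarantee}, replacing the gradient-norm potential with the squared-distance-to-optimum potential throughout. First I would invoke the convex staleness bound \cref{lem:convex-staleness-bound} with start index $t-d_t$ and window length $k=d_t$, so that its endpoints are $x_{t-d_t}=x'_t$ and $x_t$. Writing $S=\sum_{i=t-d_t}^{t-1}\E\Delta_i$ for the quantity in the hypothesis, this gives $\E\norm{x'_t-x_t}\le \sqrt{S}+\sqrt{\ifrac{32\beta}{\epsilon}}\,S$. Plugging in $S<\ifrac{\epsilon}{400\beta}$ and simplifying, each term becomes a constant multiple of $\sqrt{\ifrac{\epsilon}{\beta}}$, and one checks numerically that $\sqrt S+\sqrt{\ifrac{32\beta}{\epsilon}}\,S\le(\tfrac1{20}+\tfrac{\sqrt2}{100})\sqrt{\ifrac{\epsilon}{\beta}}\le\tfrac14\sqrt{\ifrac{\epsilon}{8\beta}}$; the constant $400$ is chosen precisely to leave this factor-of-four slack against the algorithm's threshold $\sqrt{\ifrac{\epsilon}{8\beta}}$.

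Next I would turn this expected-distance bound into a high-probability event. By Markov's inequality, $\E\norm{x'_t-x_t}\le\tfrac14\sqrt{\ifrac{\epsilon}{8\beta}}$ yields $\Pr[\norm{x'_t-x_t}>\sqrt{\ifrac{\epsilon}{8\beta}}]\le\tfrac14$, so the distance lies below the threshold with probability at least $\tfrac34$. Since the algorithm fails with probability at least $\tfrac12$, and failure forces every iterate---in particular $x_t$---to satisfy $f(x_t)-f(x_\star)\ge\epsilon$, a union bound gives $\E\ind_t\ge\tfrac34+\tfrac12-1=\tfrac14$. The observation that makes the local-improvement step go through is that $\ind_t=1$ already implies $f(x_s)-f(x_\star)\ge\epsilon$ for all $s\le t$, and since $x'_t=x_{t-d_t}$ is an earlier iterate this supplies $f(x'_t)-f(x_\star)\ge\epsilon$; together with $\norm{x_t-x'_t}\le\sqrt{\ifrac{\epsilon}{8\beta}}$ this is exactly the hypothesis of \cref{lem:convex-loacl_improvement}.

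With these in hand the first bound follows by conditioning on $x_t,x'_t$: on $\{\ind_t=1\}$ inequality \cref{eq:convex-descent} gives $\E[\Delta_t\mid x_t,x'_t]\ge\tfrac{\eta}{4}(f(x'_t)-f(x_\star))\ge\tfrac{\eta}{4}\epsilon$, while on $\{\ind_t=0\}$ we have $\Delta_t=0$. Taking expectations and using $\E\ind_t\ge\tfrac14$ with the case-(ii) step size $\eta=\ifrac{\epsilon}{8\sigma^2}$ gives $\E\Delta_t\ge\tfrac14\cdot\tfrac{\eta}{4}\epsilon=\ifrac{\epsilon^2}{128\sigma^2}$. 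For the ``in particular'' part I would split on whether the hypothesis $S<\ifrac{\epsilon}{400\beta}$ holds: if it does, the first claim gives $\E\Delta_t\ge\ifrac{\epsilon^2}{128\sigma^2}\ge\tfrac1{800}\cdot\ifrac{\epsilon^2}{\sigma^2}$; if it fails, then $\tfrac1{2\tau}S\ge\tfrac1{2\tau}\cdot\ifrac{\epsilon}{400\beta}=\tfrac1{800}\cdot\ifrac{\epsilon}{\tau\beta}$, and since $\E\Delta_t\ge0$ the sum is at least this. In either case the left-hand side dominates $\tfrac1{800}\min\{\ifrac{\epsilon}{\tau\beta},\ifrac{\epsilon^2}{\sigma^2}\}$.

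The main obstacle---really a bookkeeping subtlety rather than a deep difficulty---is the probabilistic combination in the middle step: I must convert the \emph{expected} staleness bound into a \emph{high-probability} distance bound, intersect it with the global failure event, and verify that $\ind_t=1$ simultaneously supplies both the distance hypothesis and the suboptimality-gap hypothesis at $x'_t$ (not merely at $x_t$) required by \cref{lem:convex-loacl_improvement}. The accompanying numerical check---that the constant $400$, the threshold $\sqrt{\ifrac{\epsilon}{8\beta}}$, and the factor-$\tfrac14$ Markov slack are mutually consistent---is the only place where the specific constants matter.
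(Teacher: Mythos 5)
Your proof is correct and takes essentially the same route as the paper's: bound $\E\norm{x_t-x'_t}$ via \cref{lem:convex-staleness-bound}, convert to a high-probability distance bound by Markov, intersect with the failure event to get $\E \ind_t \ge \tfrac14$, and apply \cref{eq:convex-descent} with $\eta = \ifrac{\epsilon}{8\sigma^2}$ to obtain $\E\Delta_t \ge \ifrac{\epsilon^2}{128\sigma^2}$, with the ``in particular'' following from the same case split the paper leaves implicit. Your numerical verification ($\sqrt{S}+\sqrt{32\beta/\epsilon}\,S \le \sqrt{\epsilon/(128\beta)}$) and your explicit handling of the fact that $\ind_t=1$ supplies the suboptimality gap at $x'_t$ (not just $x_t$) match the paper's intent exactly, and in fact correctly work with the threshold $\sqrt{\ifrac{\epsilon}{8\beta}}$ that \cref{lem:convex-loacl_improvement}'s proof actually uses (its stated hypothesis $\norm{x-x'}\le\ifrac{\epsilon}{4\beta}$ appears to be a typo).
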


\begin{proof}
    If 
    $
        \sum_{i=t-d_i}^{t-1} \E \Delta_i 
        < 
        \ifrac{\epsilon}{400 \beta}
        ,
    $
    then 
    $
        \E\|x_{t-d_t}-x_t\| 
        \le 
        \sqrt{\ifrac{\epsilon}{128 \beta}}
    $
    by \cref{lem:convex-staleness-bound}.
    By a Markov inequality, with probability $\ge \tfrac{3}{4}$, we have
    $
        \|x_{t-d_t}-x_t\| 
        \le  
        \sqrt{\ifrac{\epsilon}{8 \beta}}
        .
    $ 
    Since the probability that $f(x_{t-d_t}) - f(x_\star) > \epsilon$ is at least $\frac12$, we get that $\E \ind_t \ge \frac14$. 
    Finally, by \cref{lem:convex-loacl_improvement} this implies that 
    $$
        \E\Delta_t 
        \ge
        \frac14 \cdot \frac{\epsilon}{32 \sigma^2} \cdot \epsilon
        =
        \frac{\epsilon^2}{128 \sigma^2}
        \ge
        \frac{\epsilon^2}{800 \sigma^2}
        ,
    $$
    which yields our claim.
\end{proof}

We now prove our main claim.

\begin{proof}[Proof of \cref{lem:convex-improvements-tradeoff}]
    We have,
    \[
        \sum_{t=1}^{T}\E \Delta_t 
        \ge 
        \sum_{t : d_t\le 2\tau} \frac{1}{2 \tau}\sum_{i=t-d_t}^{t-1}\E\Delta_i
        .
    \]
    Hence, using \cref{corr:convex-descent-guarantee},
    \begin{align*}
        \sum_{t=1}^{T}\E \Delta_t 
        &\ge
        \frac12 \sum_{t : d_t\le 2\tau} \brk4{\E\Delta_t + \frac{1}{2\tau}\sum_{i=t-d_t}^{t-1}\E\Delta_i}
        \ge 
        \abs!{\set{ t : d_t\le 2\tau }} \, \frac{1}{800} \min\brk[c]3{\frac{\epsilon}{\tau \beta}, \frac{\epsilon^2}{\sigma^2}}
        \ge 
        \frac{T}{2} \cdot \frac{1}{800} \min\brk[c]3{\frac{\epsilon}{\tau \beta}, \frac{\epsilon^2}{\sigma^2}} \\
        &=
        \frac{T}{1600} \min\brk[c]3{\frac{\epsilon}{\tau \beta}, \frac{\epsilon^2}{\sigma^2}}
        ,
    \end{align*}
    where we used Markov's inequality to show that $| \{ t : d_t\le 2\tau\} | \ge \frac12 T$.
\end{proof}

\subsubsection{Concluding the proof}

\begin{proof}[Proof of \cref{thm:convex-main}]
    In the case $\sigma^2\leq 2 \beta \epsilon$, \cref{lem:convex-no-noise} implies that if $T > \ifrac{512 \beta F^2 (\tau +1)}{\epsilon}$, then the algorithms succeeds with probability greater than $1/2$, which yields the theorem in this case.
    Similarly, \cref{lem:convex-improvements-tradeoff} gives our claim in the case when $\sigma^2>2 \beta \epsilon$.
\end{proof}
 
\section{Details of Experiments} \label{sec:exp_detail}

\subsection{Simulation method}
In this section, we describe in detail the simulation environment that we used to compare the performance of distributed optimization algorithms under various heterogeneous distributed computation environments.
Our simulation environment has two components:
One, for generating the order in which the gradients are applied to the parameter state (see \cref{alg:simgenmaster}, \simgenmaster) and another that given that order carries out the distributed optimization computation (sequentially, in a deterministic manner, see \cref{alg:simgalg}, \simalg).

\subsubsection{\simgenmaster}
We consider a sequential distributed computation, where a shared state is updated by concurrent workers, and the update depends on the state history alone - that is, the update at state $S_w$ (at {\em stage} $w$, where 
$w = 1\ldots T$) is a function of the state trajectory $S_0, \ldots S_w$. Specifically, we address the case where the update is a function of a single state $S_r(w), \ \  r(w) \leq w$, from the trajectory. 
Crucially, the sequence 
$\{r(w)\}_{w=1\ldots T}$ uniquely determines the outcome $S_T$ of the computation. This setting captures a distributed gradient descent computation, where the gradient applied at state $S_w$ was computed at state $S_{r(w)}$. Indeed, up to data batching order, different optimization algorithms (SGD and \algo, in our case) may be compared in a deterministic manner over the same generated sequence 
$\{r(w)\}_{w=1\ldots T}$, of certain predefined statistical properties.

We generate a sequence $Q = \{(r(w), w)\}_{w = 1 \ldots T}$ in \cref{alg:simgenmaster}
by simulating concurrent workers that share a state $S$. Each worker (performing \cref{alg:simgen}), at each iteration creates a pair 
$(r,w)$ appended to $Q$ as follows: instead of actually performing the (gradient) computation, the worker merely records the state $r\gets S$, waits for a random period, records the state again $w \gets S$ and advances it (note that during the wait, the shared state may have been advanced by another worker). Note that the nature of the random wait determines the statistical properties of the generated sequence $Q$.

\begin{algorithm}[ht]
    \caption{\simgenworker 
    \label{alg:simgen}}
    \begin{algorithmic}[1]
        \STATE {\bf shared variables}:
            stage $S$,
            sequence $Q$,
            tasks queue $T$.
        \WHILE{$T > 0$}
            \STATE {$T \gets T - 1$}
            \STATE {$r \gets S$}
            \STATE {Wait for a random period}
            \COMMENT{gradient computation}
            \STATE {$w \gets S$, $S \gets S + 1$} \COMMENT{atomically}
            \STATE {Wait for a random period}
            \COMMENT{gradient update}
            \STATE {$Q$.append($[r,w]$)}
        \ENDWHILE
    \end{algorithmic}
\end{algorithm}

\begin{algorithm}[ht]
    \caption{\simgenmaster 
    \label{alg:simgenmaster}}
    \begin{algorithmic}[1]
        \STATE {\bf input}:
            number of workers $n$,
            number of steps $t$,
        \STATE {\bf Initialize shared variables}:
            stage $S \gets 0$,
            sequence $Q \gets []$,
            tasks queue $T \gets t$.
        \STATE {\bf spawn} $n$ workers.
        \STATE {\bf wait until} $T=0$.
        \STATE {\bf return} Sorted $Q$.
    \end{algorithmic}
\end{algorithm}

\subsubsection{\simalg}

Given a sorted\footnote{Lexicographical order. This is the reason for generating pairs in \cref{alg:simgenmaster} and not just $\{r(w)\}_{w = 1 \ldots T}$. Note that every $w$ in $Q$ appears exactly once.}
sequence $Q = \{(r(w), w)\}_{w = 1 \ldots T}$, \cref{alg:simgalg} simulates the distributed optimization computation (of the state $X$ of a given model $M$) by sequentially considering pairs $(r,w)$ from $Q$.

The algorithm maintains a computation stage $S$, a map $G$, where $G[r]$ holds the gradient $g$ computed at stage $r$ (and the state $x$ for which the gradient was computed), and a map $F$, where $F[w]$ is the stage $r$ in which the gradient (to be applied at stage $w$) was computed.

For each pair $(r, w)$ in $Q$, the algorithm updates the map $F$ accordingly, and then, as long as the computation stage $S < r$, uses the information in the maps $G$ and $F$ to apply gradients to the computation state $X$ accordingly and advance the stage $S$, until it reaches $r$. The gradients are applied through the optimization algorithm (i.e., SGD or \algo) which is passed as an input to the simulation.
After the above {\em catch up} phase, the gradient at stage $r$ (Note that at this point $r = S$) is computed (this only happens for the first appearance of $r$) using a batch sampled from the input data set $D$, and
the map $G$ is updated accordingly with the computed gradient $g$ and the computation state $X$.

Note that during the {\em catch up} phase, since $Q$ is sorted, if $r > S$ then $F[S] = r' < S < r$ so the gradient at $r'$ was already computed in a previous iteration (at the first time $r'$ appeared in a pair $(r',S)$ in $Q$). Moreover, $F[S]$ was updated at that same previous iteration, establishing correctness.   

\begin{algorithm}[ht]
    \caption{\simalg 
    \label{alg:simgalg}}
    \begin{algorithmic}[1]
        \STATE {\bf input}:
            Sorted Sequence $Q$.
            Model $M$ with state $X$ to be optimized using algorithm $Alg$ over train data $D$.
        \STATE {\bf Initialize variables}:
            computation stage $S \gets 0$,
            gradients map $G \gets \{\}$,
            apply stage map $F \gets \{\}$.
        \FOR{$[r,w]$ in $Q$}    
            \STATE {$F[w] \gets r$}.
            \WHILE {$S < r$}
                \STATE {$g,x \gets G[(F[S],S)]$}
                \STATE {$X \gets$} $Alg$ apply gradient ($g$, computed at state $x$).
                \STATE {$S \gets S + 1$}
            \ENDWHILE
            \STATE {$d \gets$} data batch from $D$
            \STATE {$g \gets$} model $M$ gradient at $X$ for data batch $d$.
            \STATE {$G[(S, w)] \gets [g,X]$}
        \ENDFOR
        \STATE {\bf return $X$} \COMMENT{optimal state for $M$ over data set $D$ as computed by $Alg$}.
    \end{algorithmic}
\end{algorithm}

\subsection{Experiments settings}
\subsubsection{Generated sequences}

\begin{figure*}[h]
\centering
\subfigure{\includegraphics[width=.49\textwidth]{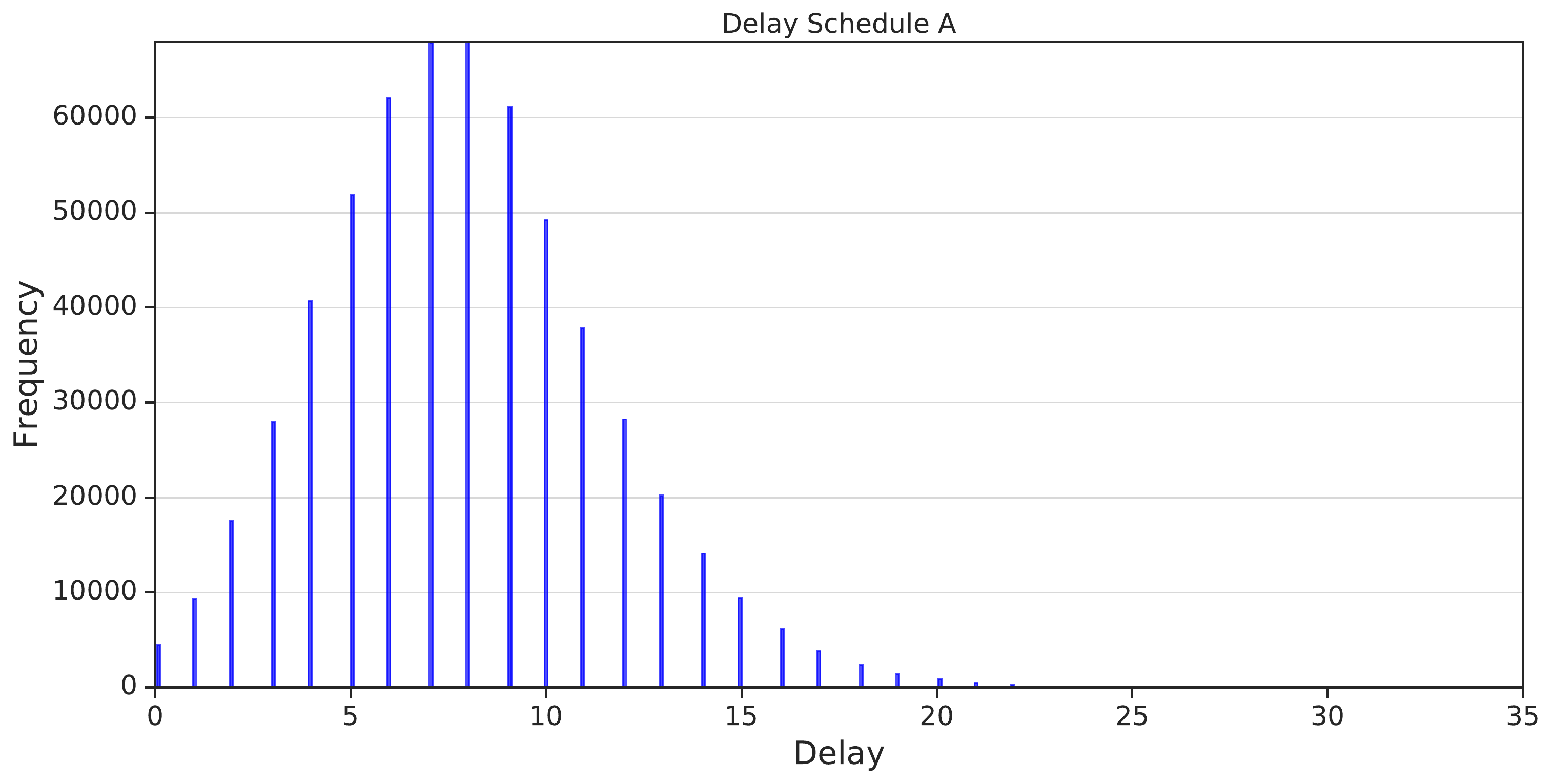}\hfill}
\subfigure{\includegraphics[width=.49\textwidth]{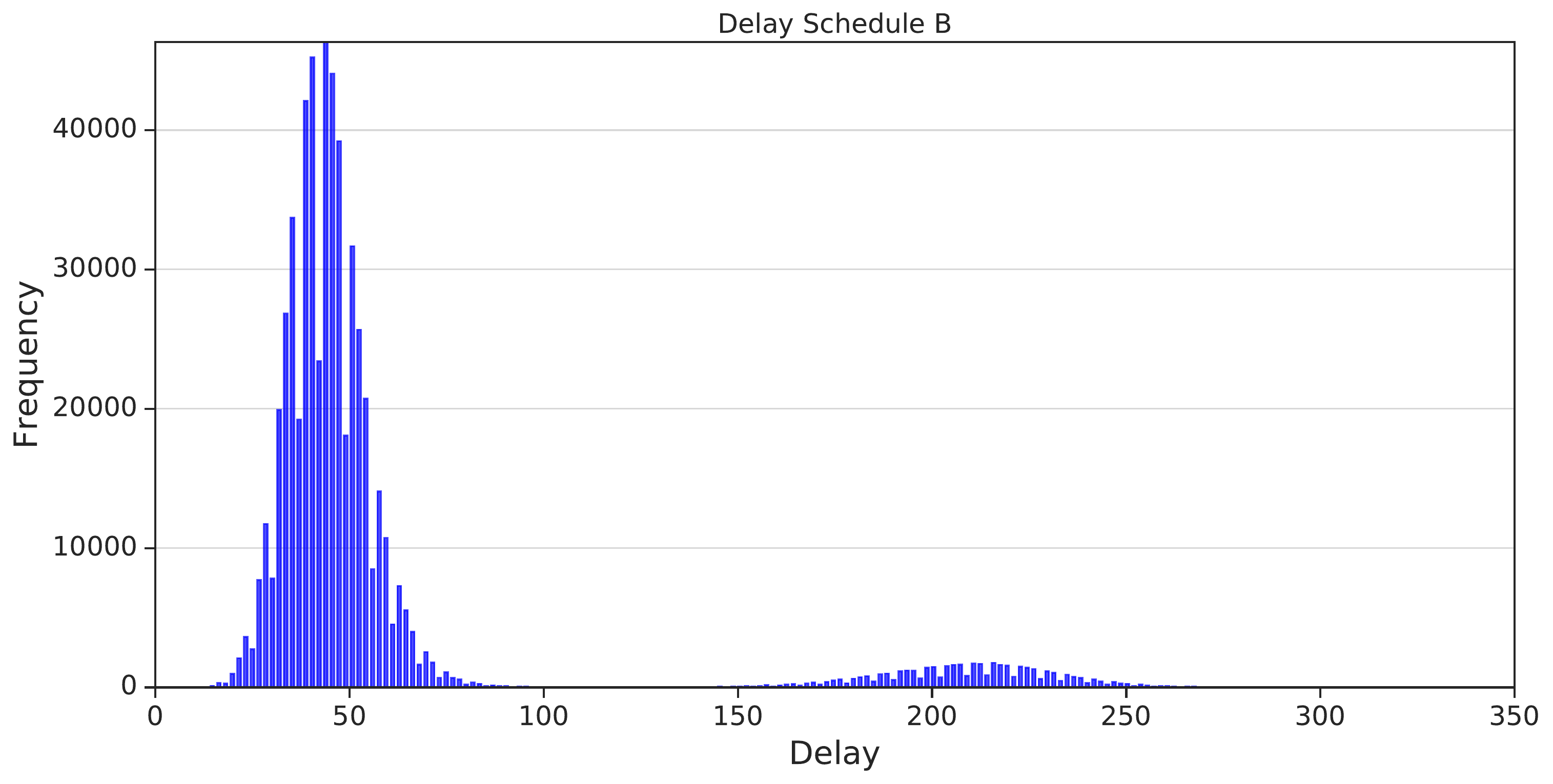}}
\subfigure{\includegraphics[width=.49\textwidth]{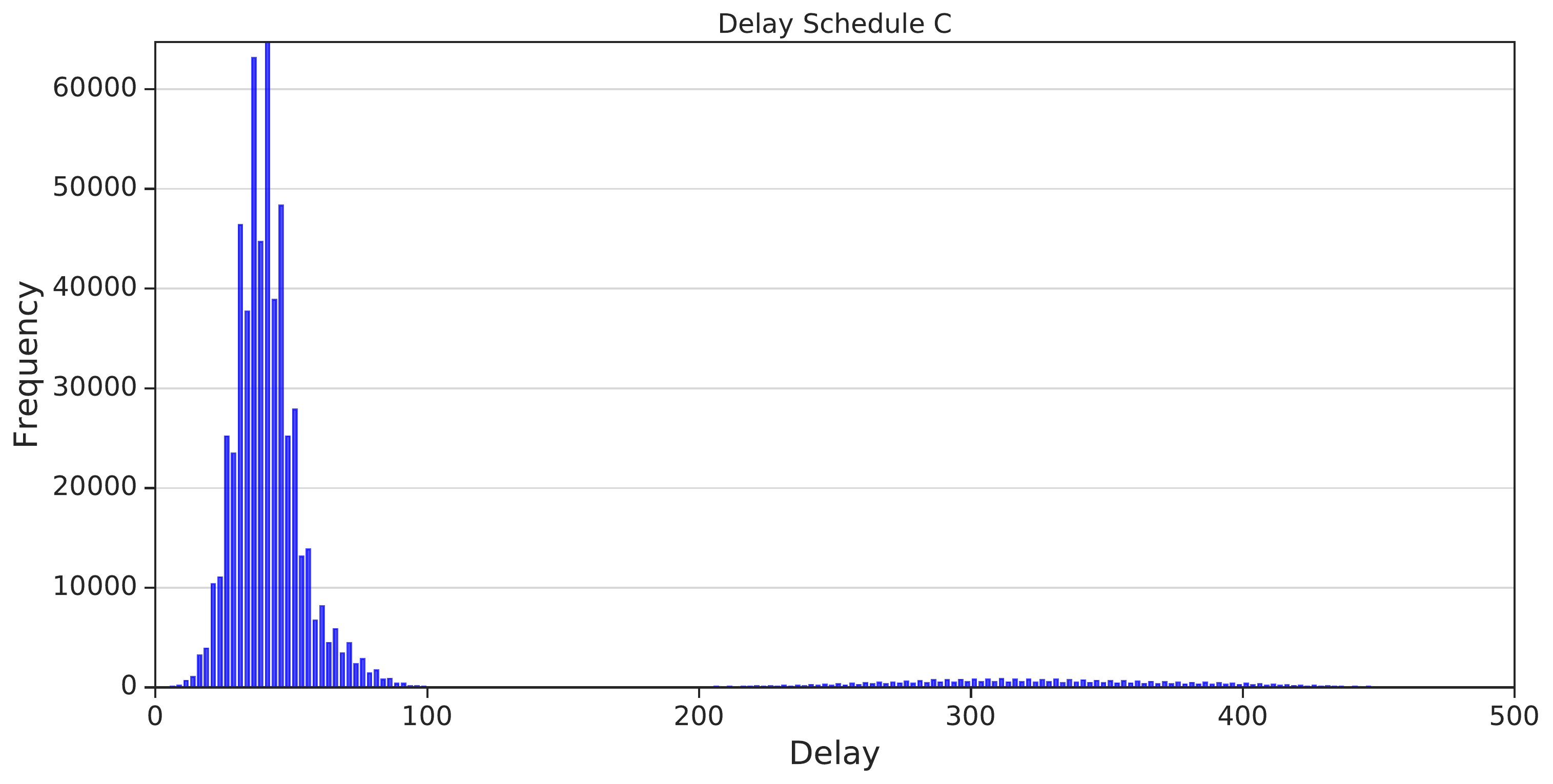}\hfill}
\subfigure{\includegraphics[width=.49\textwidth]{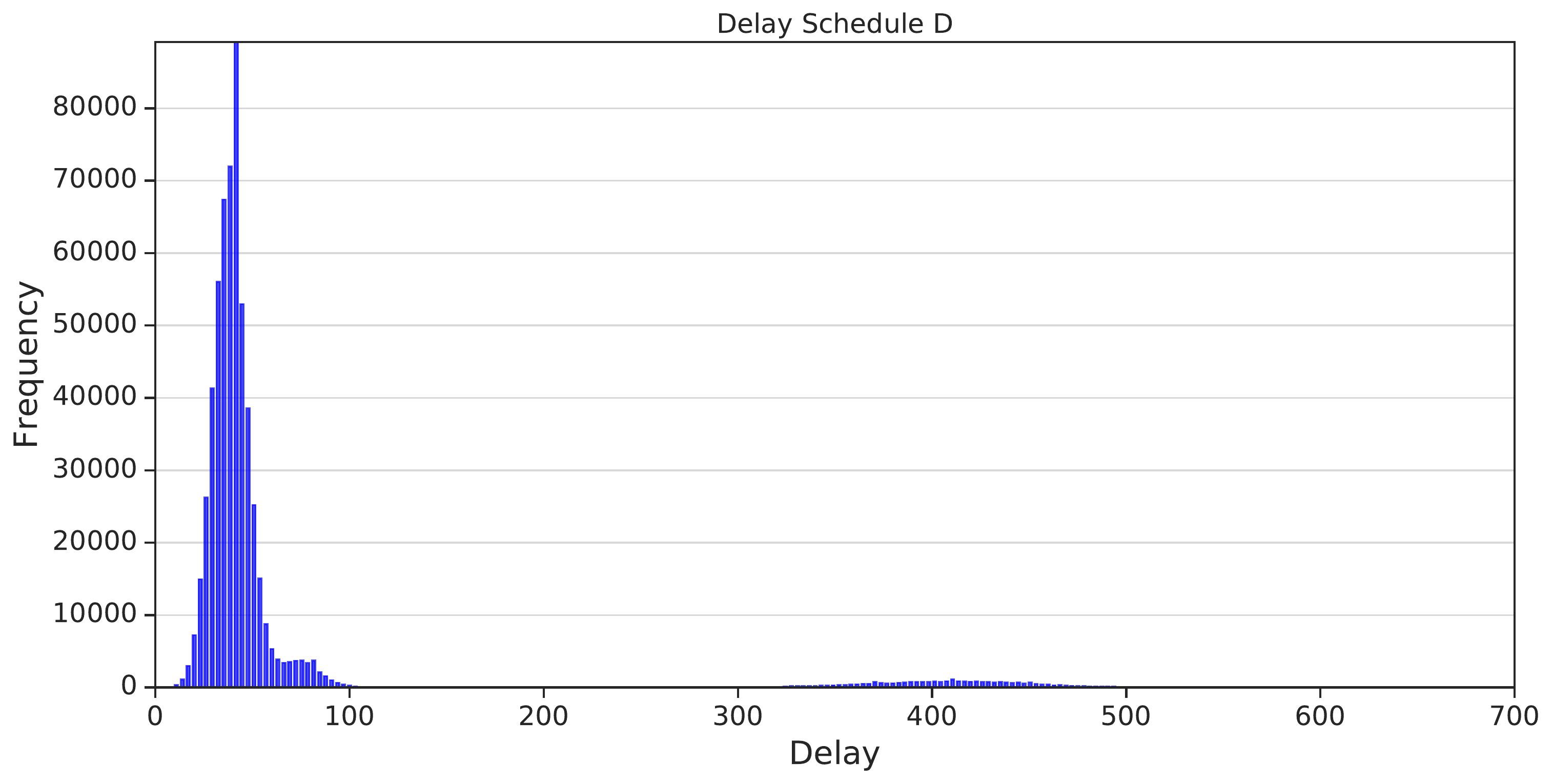}}
\vspace{-0.15in}
\caption{Delay histograms, derived from the delay schedules created for a total of 750 training epochs of CIFAR-10. (A) is the baseline schedule, while (B) (C) and (D) where created with increasing delay variance.}
\label{fig:delay}
\end{figure*}

We used \cref{alg:simgenmaster} to generate four schedules with different statistical properties by varying the number of workers simulated and the sampled wait period distributions as follows:
For schedule $A$ we used $n=10$ workers and Poisson distribution $P$. For the other schedules ($B$, $C$, and $D$) we simulated $75$ workers and used for each schedule a weighted mixture of two Poisson distributions:
$P$ with probability $0.92$ and $150P$ with probability $0.08$ for schedule $B$. $P$ with probability $0.935$ and $240P$ with probability $0.065$ for schedule $C$. And $P$ with probability $0.95$ and $330P$ with probability $0.05$ for schedule $D$. In all schedules we used parameter $4.06$ for the Poisson distribution $P$. Finally, in \cref{alg:simgen}, we scaled the second wait by $0.2$ to reflect the relatively longer time required for gradient computation (compared to the time required for updating the state). The four delay schedules are illustrated in \cref{fig:delay}.

\subsubsection{Meta parameters and further results}
The baseline for the learning rate schedule we used is the one chosen for synchronous optimization of the same data set and model. It starts at a constant rate of $0.05$ and scaled down by $0.1$ at three (fixed upfront) occasions. We used the accuracy achieved at those points ($0.93, 0.98, 0.99$) to define a baseline learning rate schedule $\eta_0$ that behaves the same, but the accuracy of the first rate change being a meta-parameter $R$. We used values from $\{0.8, 0.84, 0.88, 0.93$, $0.96\}$ for $R$.
In addition, the baseline learning rate is further scaled by a meta parameter $K$ (The learning rate multiplier $\ifrac{\eta}{\eta_0}$). Values from $\{0.01, 0.02, 0.05, 0.2, 0.5, 1.0, 2.0\}$ were explored for $K$.
Finally, and only for \algo, we explored values of the threshold $\ifrac{\epsilon}{2\beta}$ in \cref{alg:sgd-with-delays}. 
We aligned changes in the threshold together with the changes in the learning rate (effectively reducing the target accuracy approximation $\epsilon$), and explored thresholds of the form $A\sqrt{\eta_0}$ for values of $A$ in $\{1, 3, 6, 9, 12\}$.

All in all, for every constellation of the meta parameters $R$, $K$, and $A$, we run \cref{alg:simgalg} for SGD and for \algo. The best performing constellation of SGD is compared with the best performing constellation of \algo. \cref{fig:algs} compares the performance trajectory for each of the four delay schedules generated by \cref{alg:simgenmaster}. \cref{fig:algstop} is a comparison of the top three performing constellations of each of the optimization algorithms, further illustrating the robustness and superiority of \algo over SGD.  \cref{table:results} and \cref{fig:algseval2} details the eventual evaluation set performance and trajectory (respectively), for each optimization algorithm and each delay schedule, demonstrating the improved generalization for \algo in all delay schedules.
\begin{table}[h!]
\centering
\vspace{0.1in}
\begin{tabular}{ |p{0.3cm}||p{1.0cm}|p{1.0cm}||p{1.0cm}|p{1.0cm}|}
 \hline
    & \multicolumn{2}{|c||}{Test} 
    & \multicolumn{2}{|c|}{Train} \\
 \hline
  & Picky SGD & SGD & Picky SGD & SGD \\
  \hline
  \hline
 A& 92.98 & 92.56 & 99.87 & 99.82\\
 \hline
 B& 91.82 & 89.64 & 99.62 & 99.25\\
 \hline
 C& 92.12 & 90.09 & 99.66 & 99.23\\
 \hline
 D& 91.80 & 90.15 & 99.61 & 99.22\\
 \hline
\end{tabular}
\label{table:acc_results}
\caption{Eventual (top-1) accuracy of the experiments in \cref{fig:algs} for train and evaluation data sets.}
\end{table}

Finally, we compared the performance of \algo to that of SGD for an alternative learning rate schedule - cosine decay.\footnote{Rather prevalent, although not the one achieving state of the art performance for SGD.}
Using the learning rate decay duration (in epochs) as a meta parameter\footnote{Replacing the $R$ meta parameter of the piece-wise constant learning rate schedule.} with values ranging over \{$150, 180, 240, 300, 360$\}. The training accuracy trajectories of the top 3 meta-parameters constellations of \algo and SGD for the four delay schedules are illustrated in \cref{fig:algseval_cd}, showing an even more pronounced performance gap in favor of \algo, mainly regarding the time to reach the 0.99 accuracy mark and robustness.    


\begin{figure*}[ht]
\centering
\subfigure{\includegraphics[width=.49\textwidth]{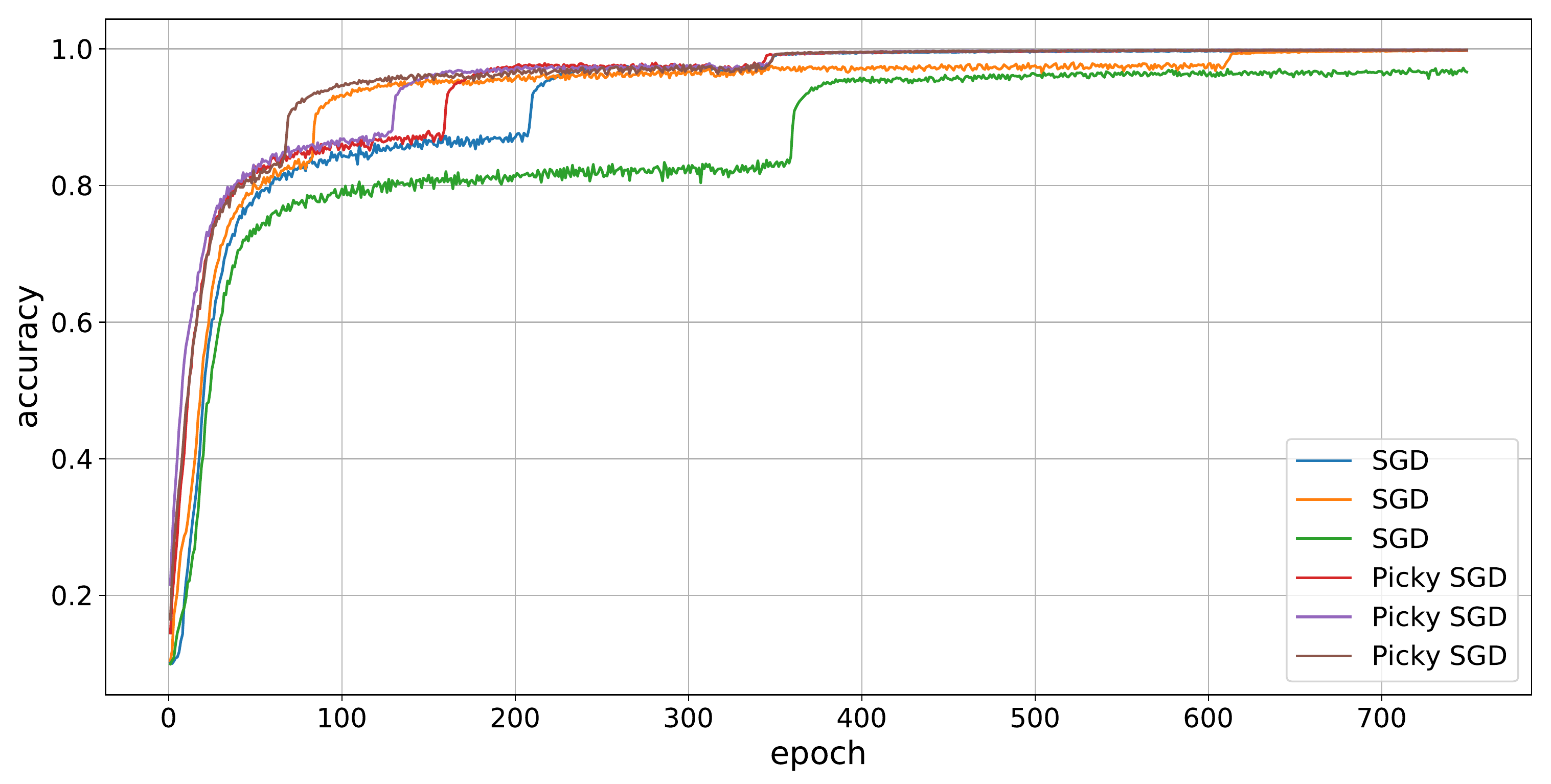}}
\subfigure{\includegraphics[width=.49\textwidth]{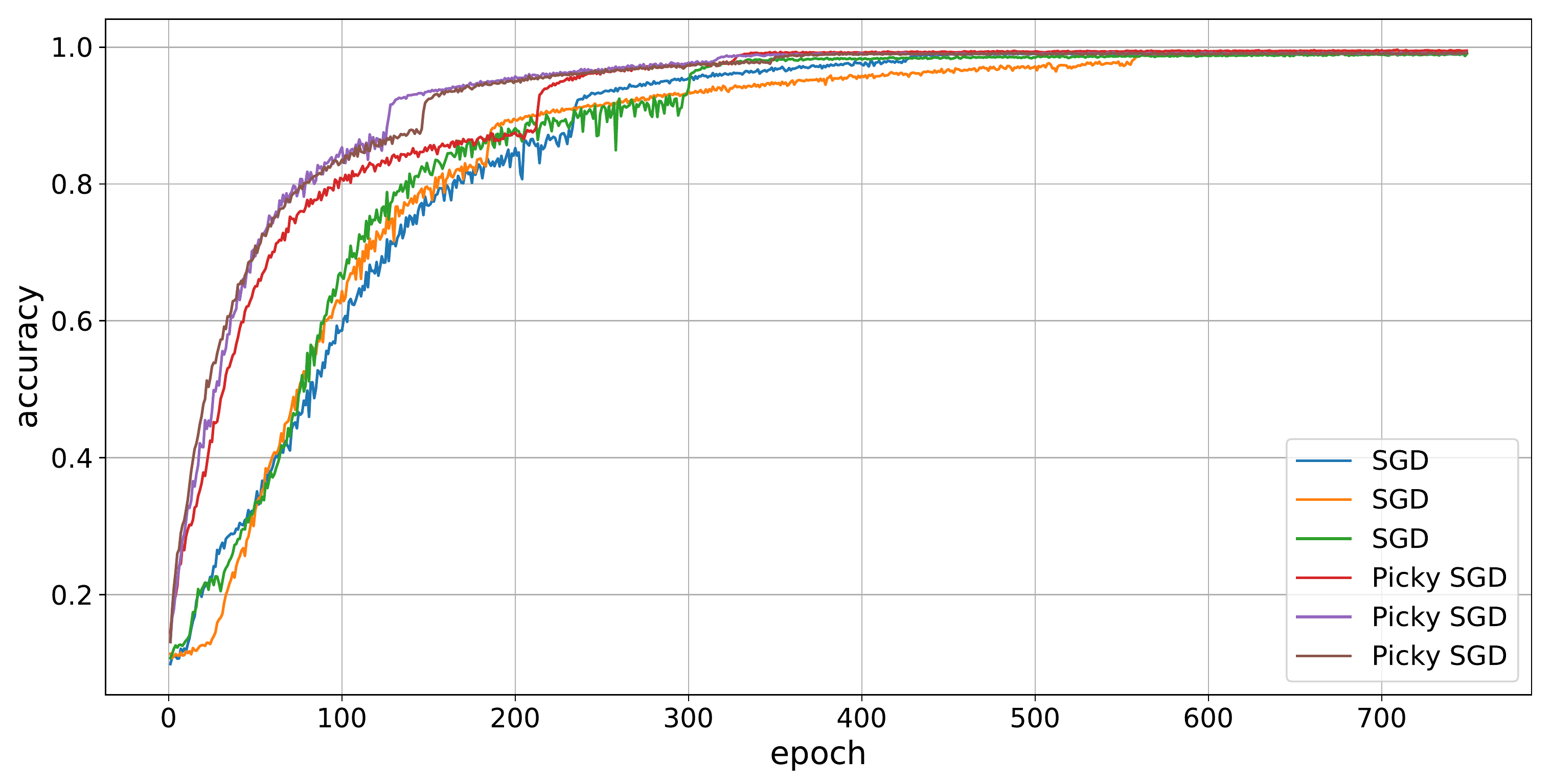}}
\subfigure{\includegraphics[width=.49\textwidth]{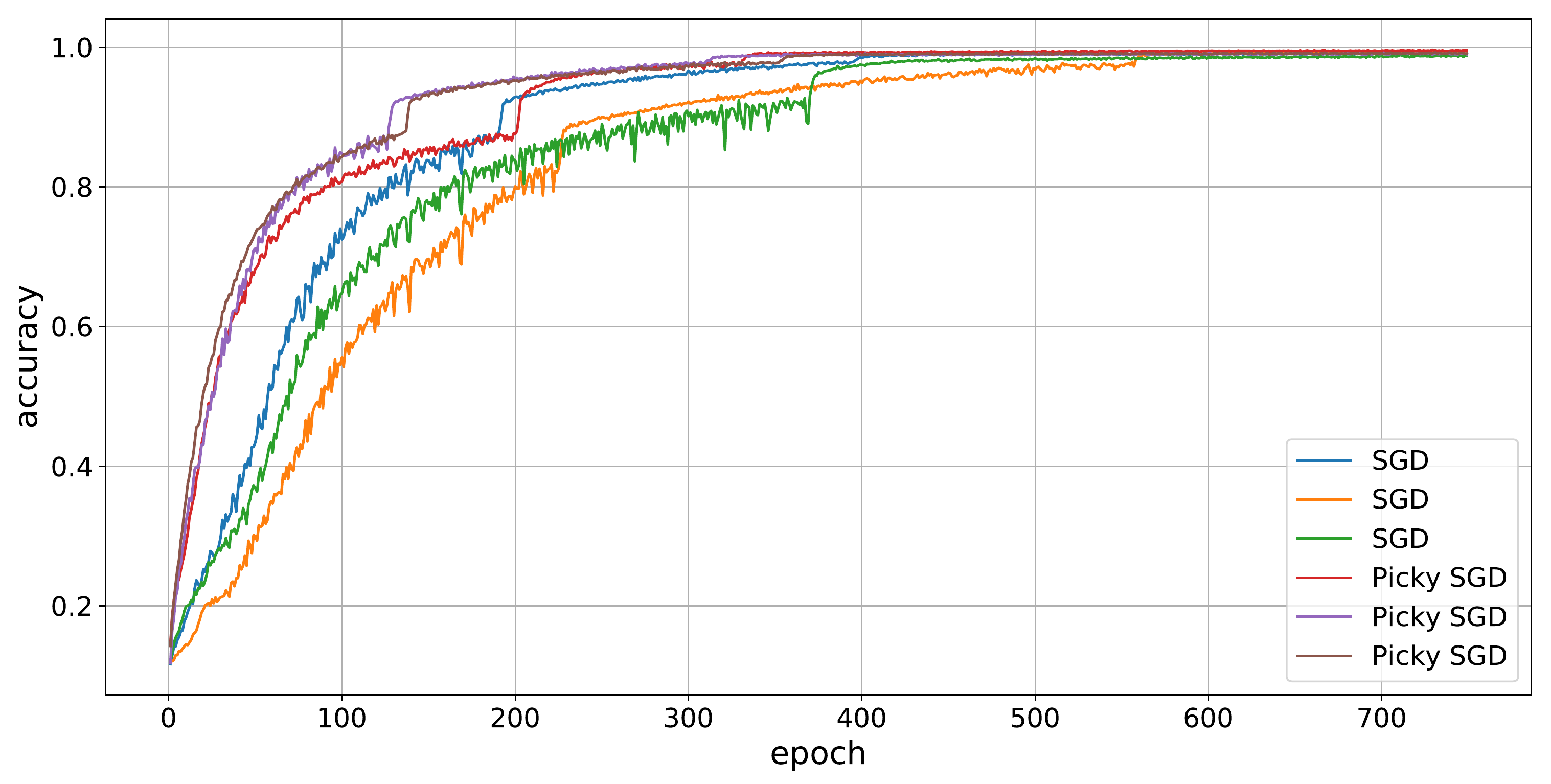}}
\subfigure{\includegraphics[width=.49\textwidth]{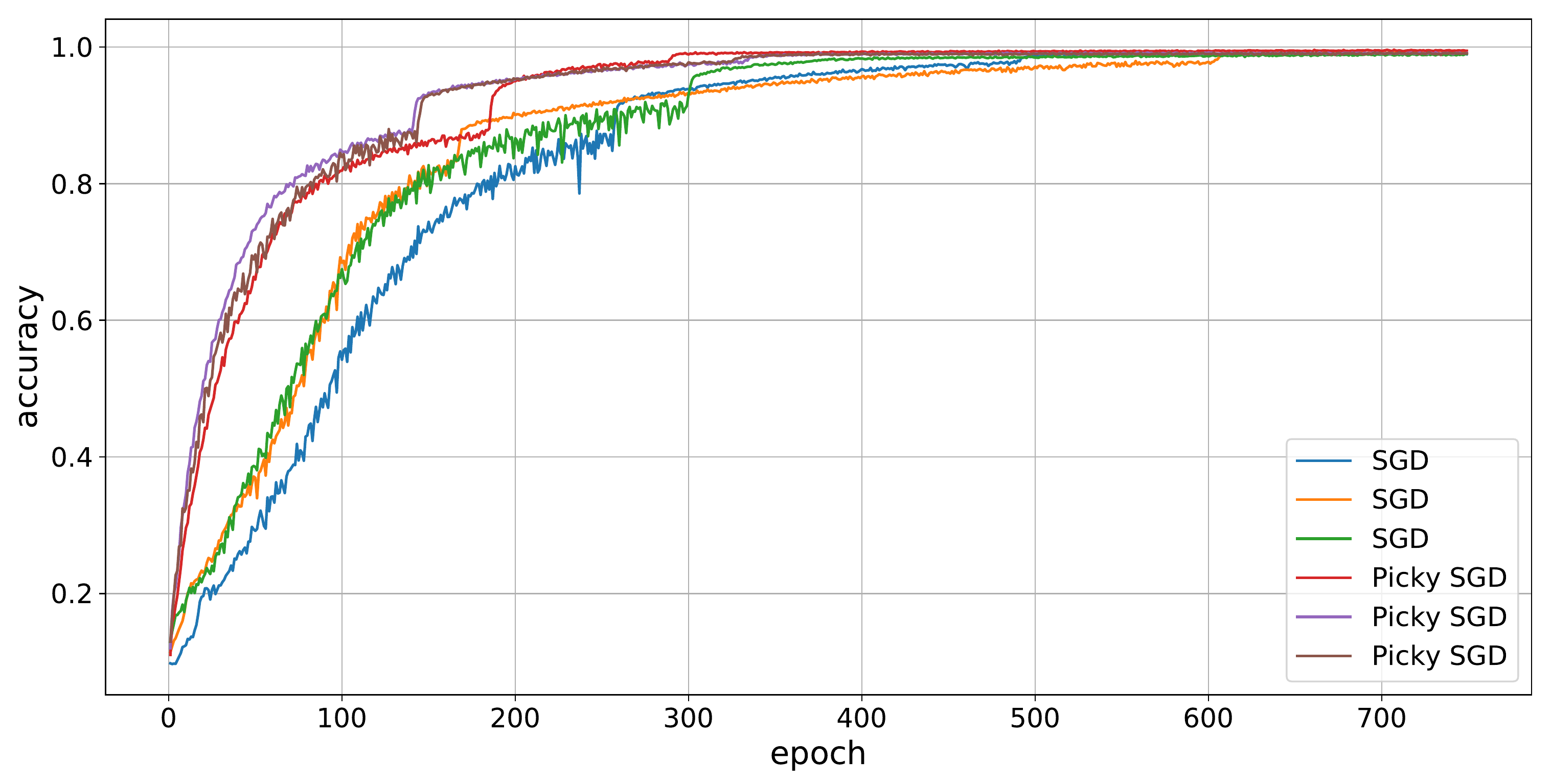}}
\vspace{-0.15in}
\caption{Training accuracy trajectory of top three configurations of \algo and SGD for the four delay schedules of \cref{fig:delay}, respectively. \algo is seen to perform better and more robustly than SGD across different hyperparameter configurations.}
\label{fig:algstop}
\end{figure*}

\begin{figure*}[ht]
\centering
\subfigure{\includegraphics[width=.49\textwidth]{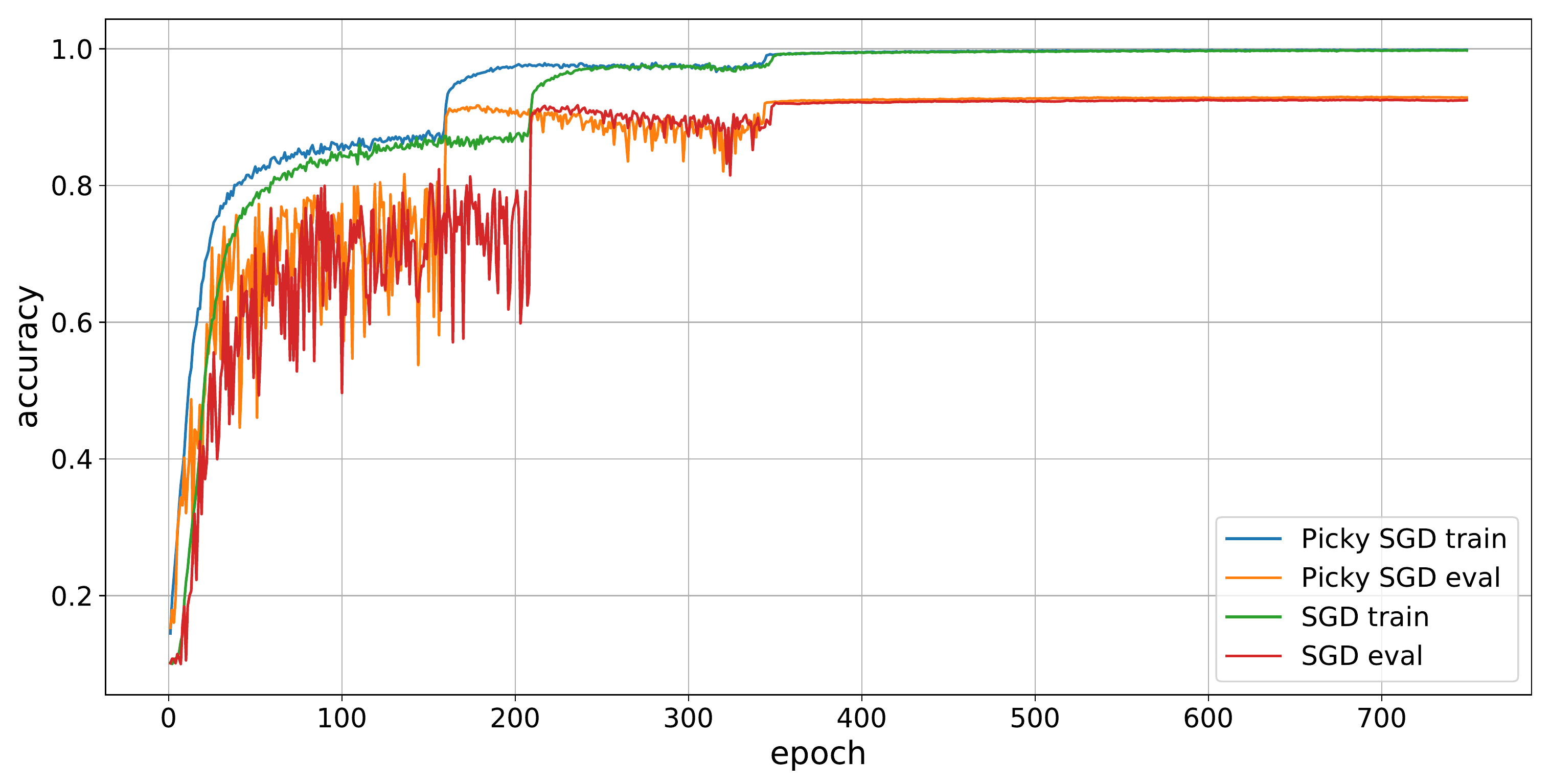}}
\subfigure{\includegraphics[width=.49\textwidth]{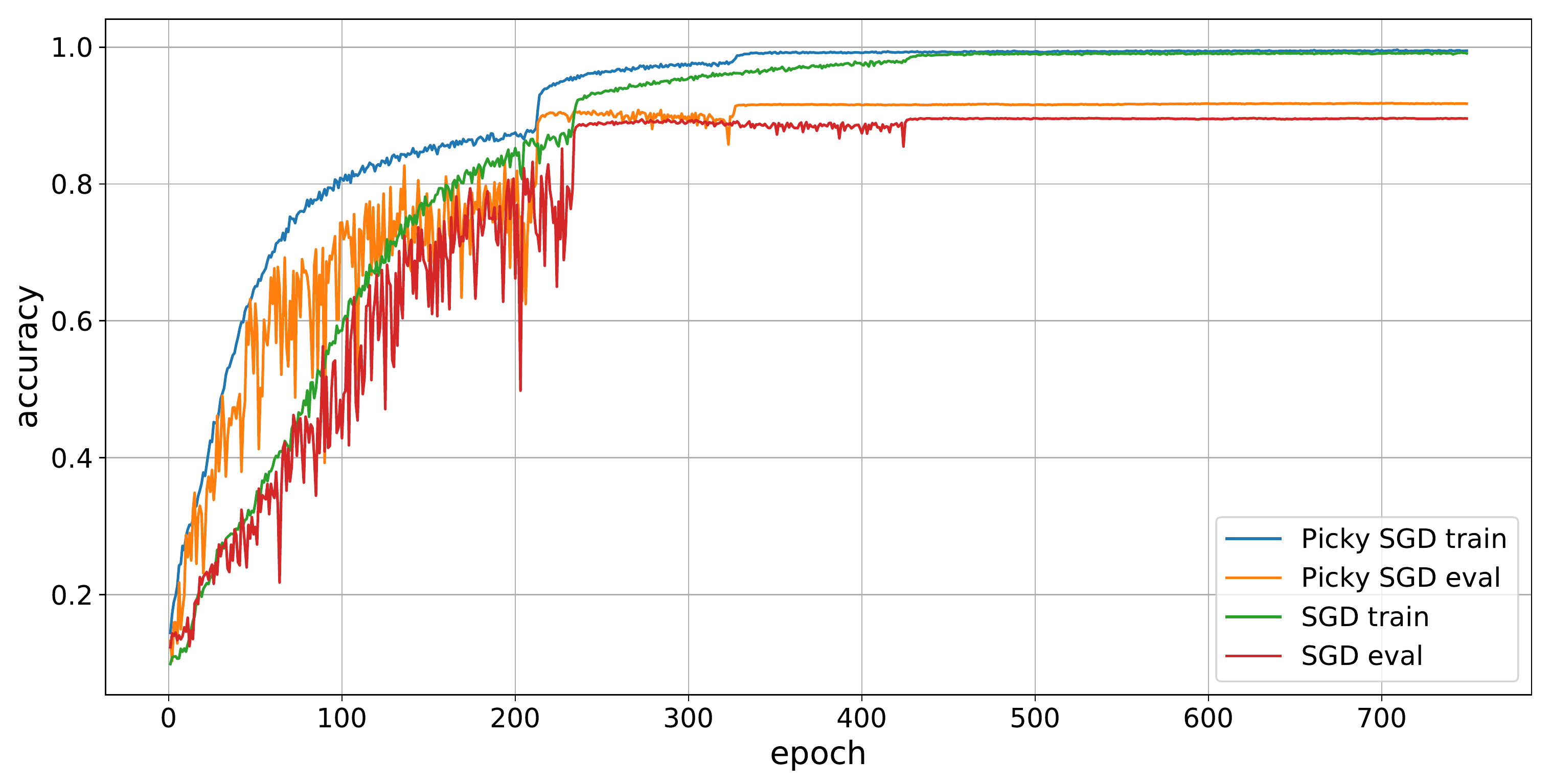}}
\subfigure{\includegraphics[width=.49\textwidth]{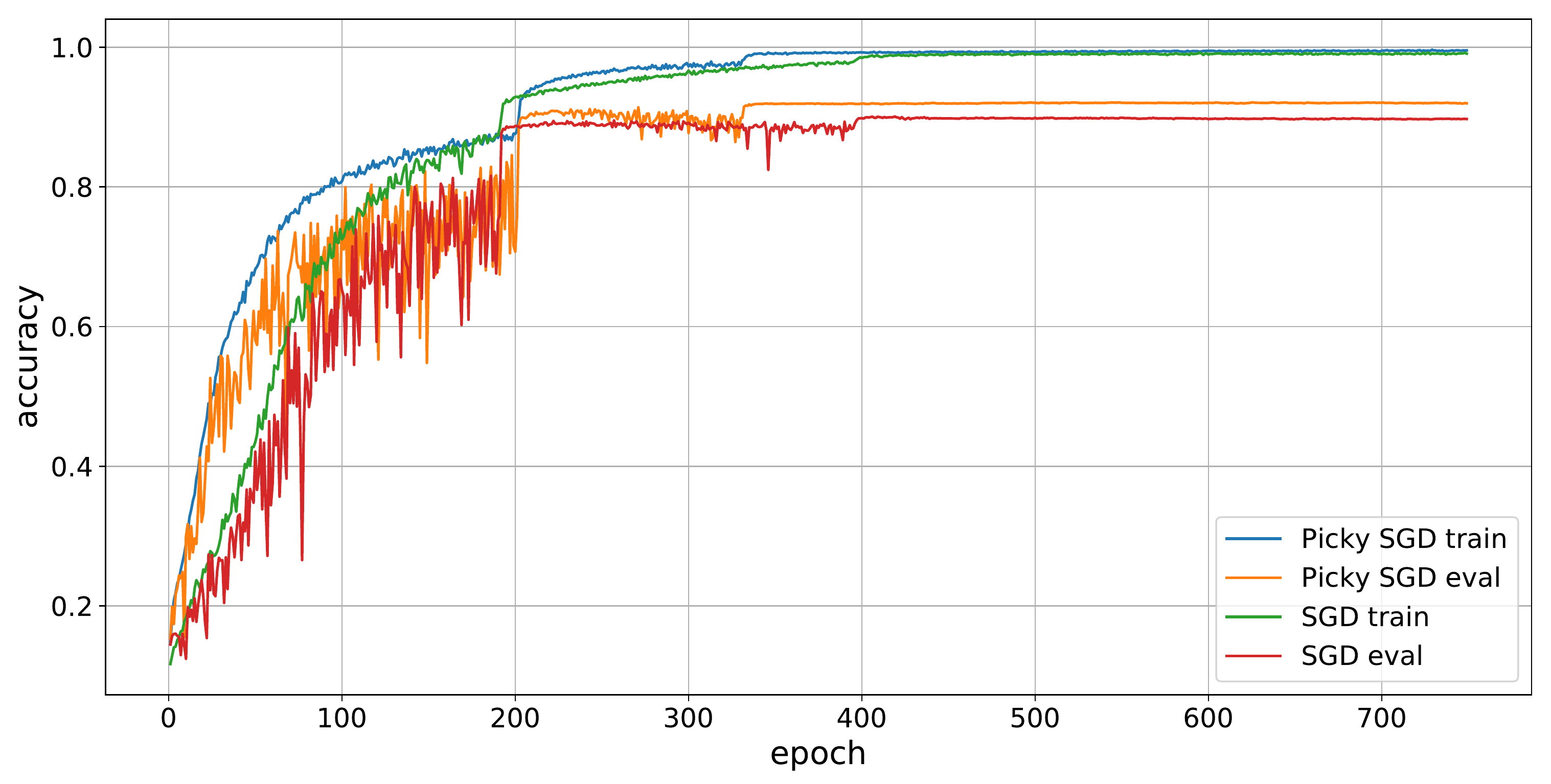}}
\subfigure{\includegraphics[width=.49\textwidth]{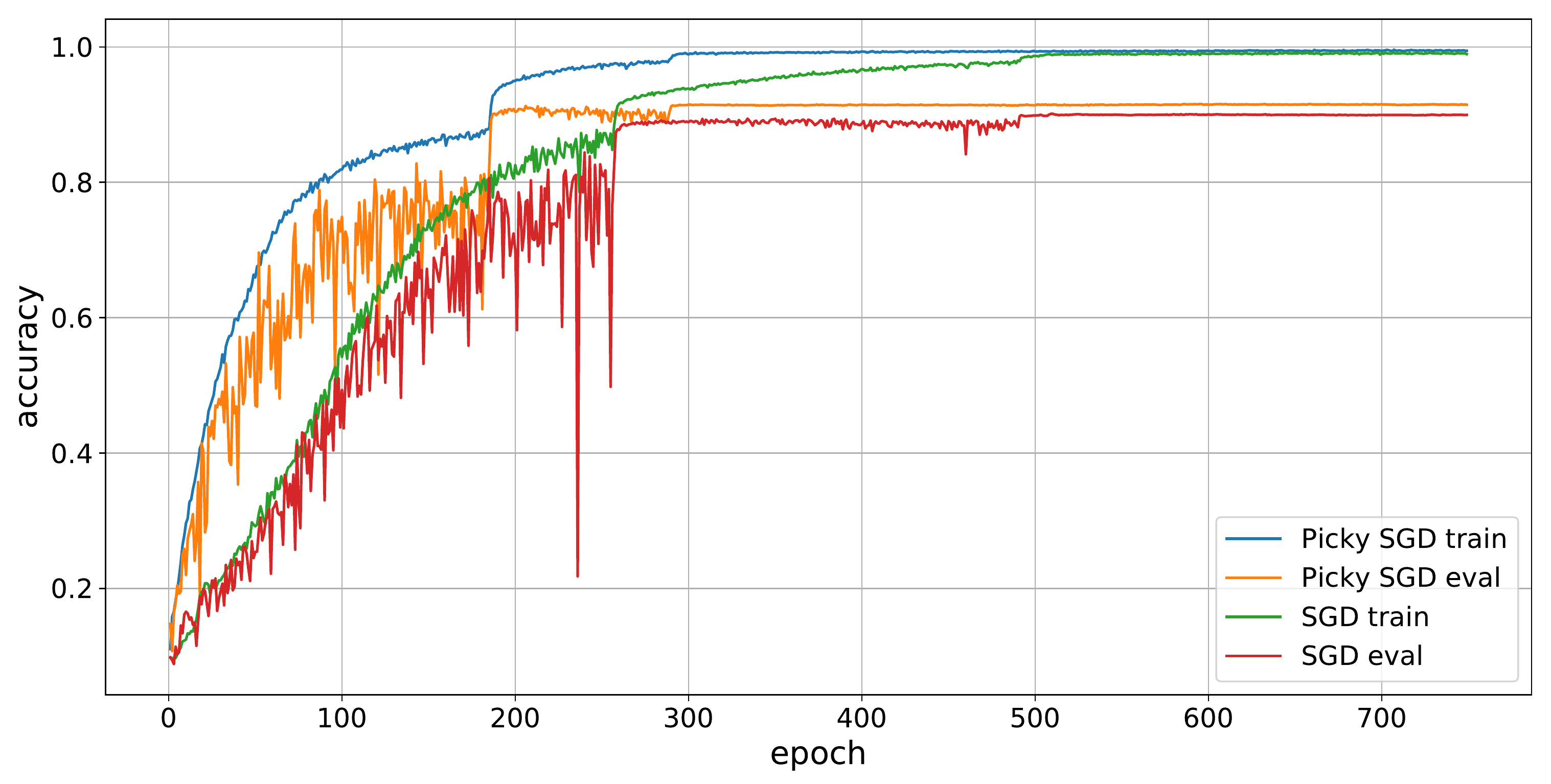}}
\vspace{-0.15in}
\caption{Train and test accuracy trajectories of \algo and SGD for the four delay schedules of \cref{fig:delay}, respectively. 
\algo is seen to outperform SGD in terms of the final test accuracy and the time it takes to achieve it, in all four scenarios.
\label{fig:algseval2}
}
\end{figure*}
\begin{figure*}
\centering
\subfigure{\includegraphics[width=.49\textwidth]{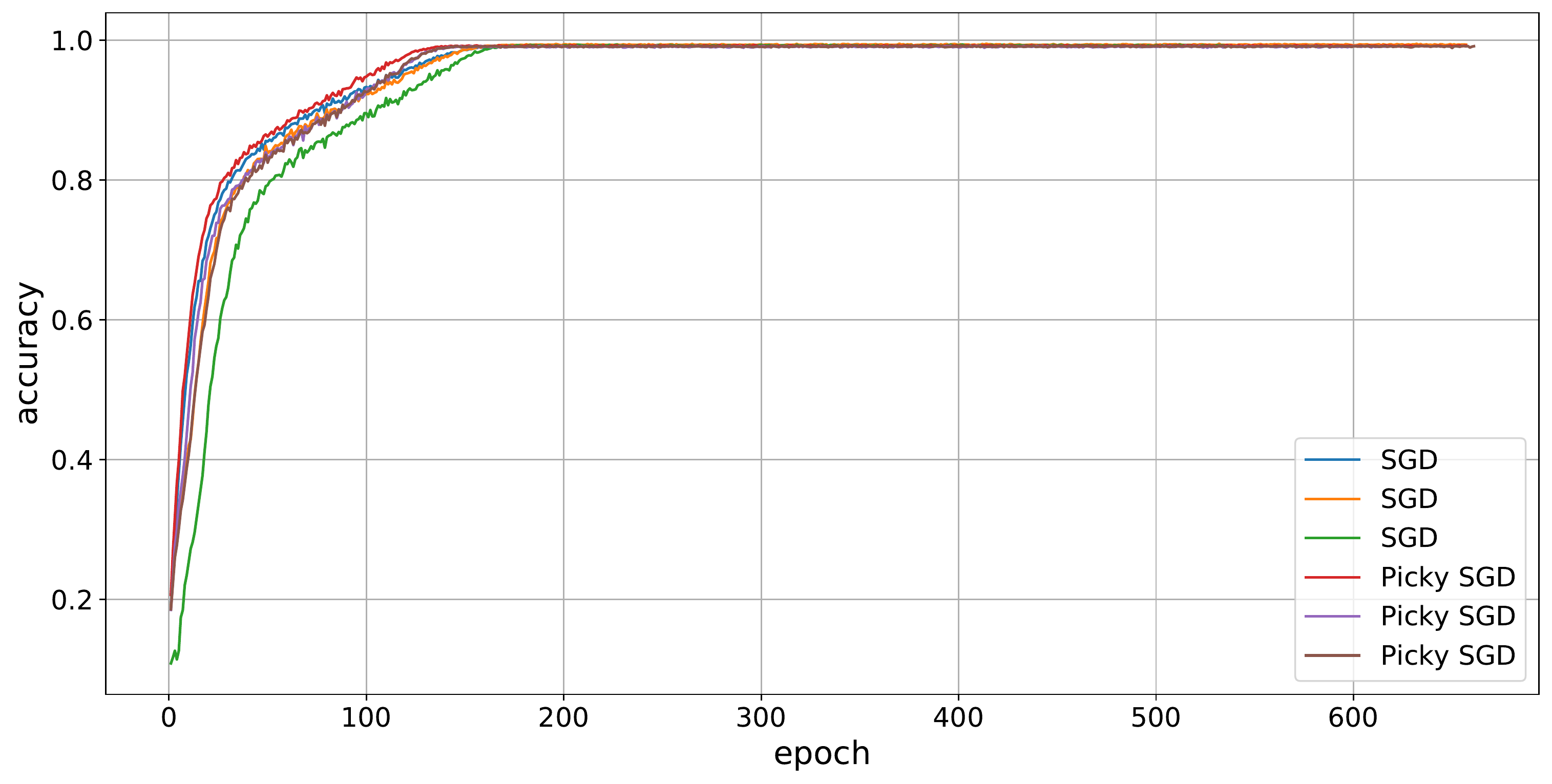}}
\subfigure{\includegraphics[width=.49\textwidth]{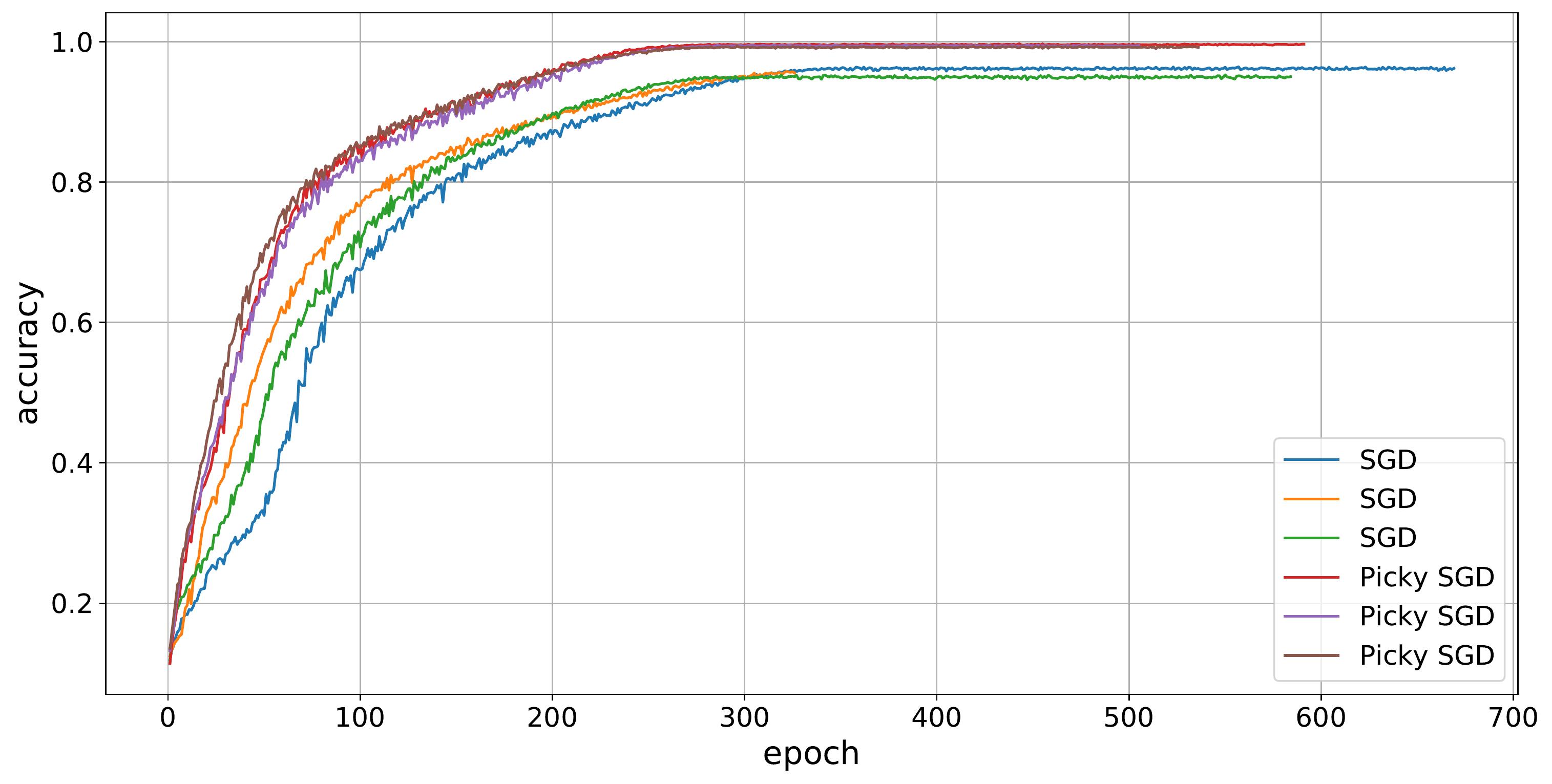}}
\subfigure{\includegraphics[width=.49\textwidth]{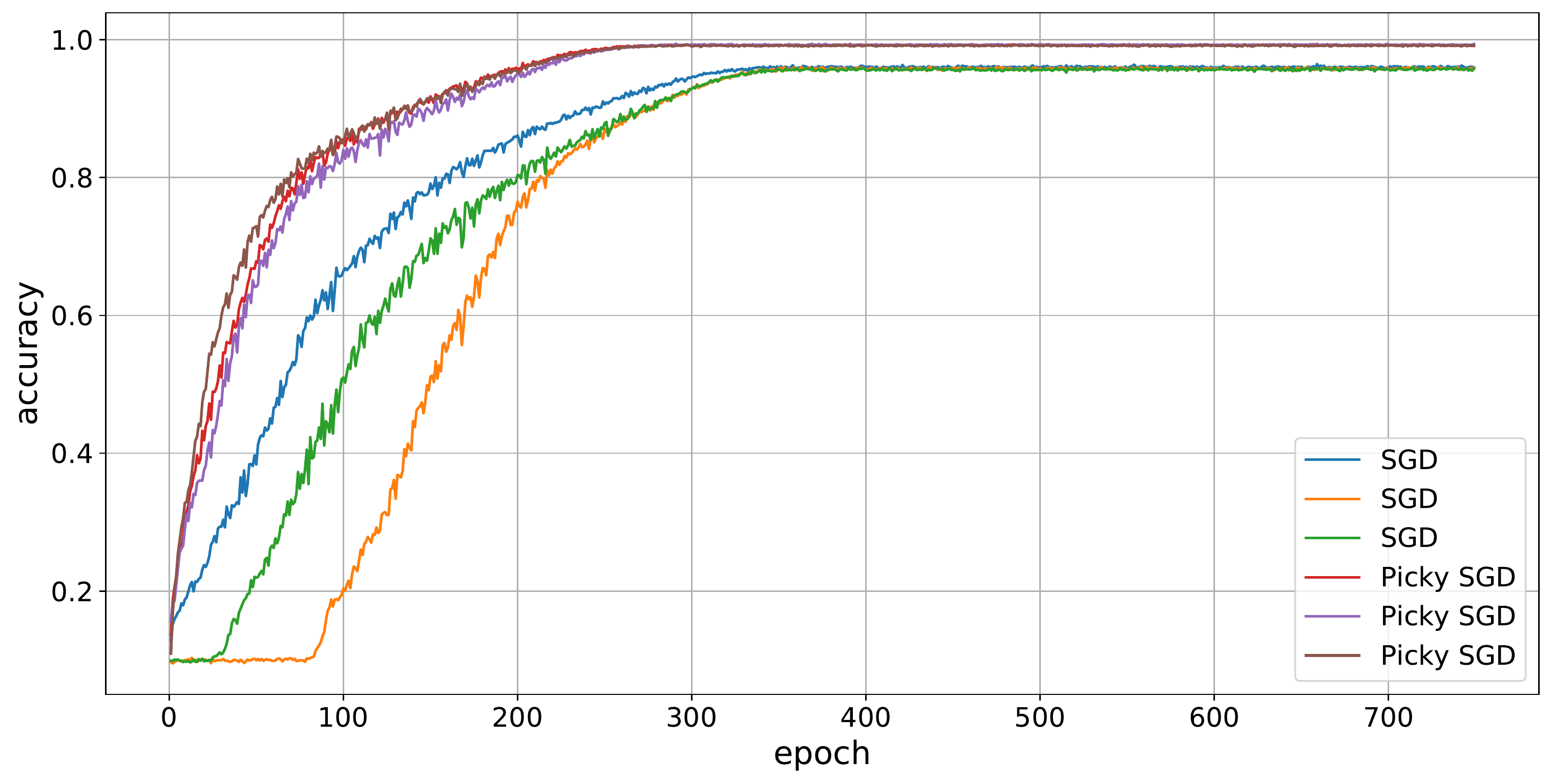}}
\subfigure{\includegraphics[width=.49\textwidth]{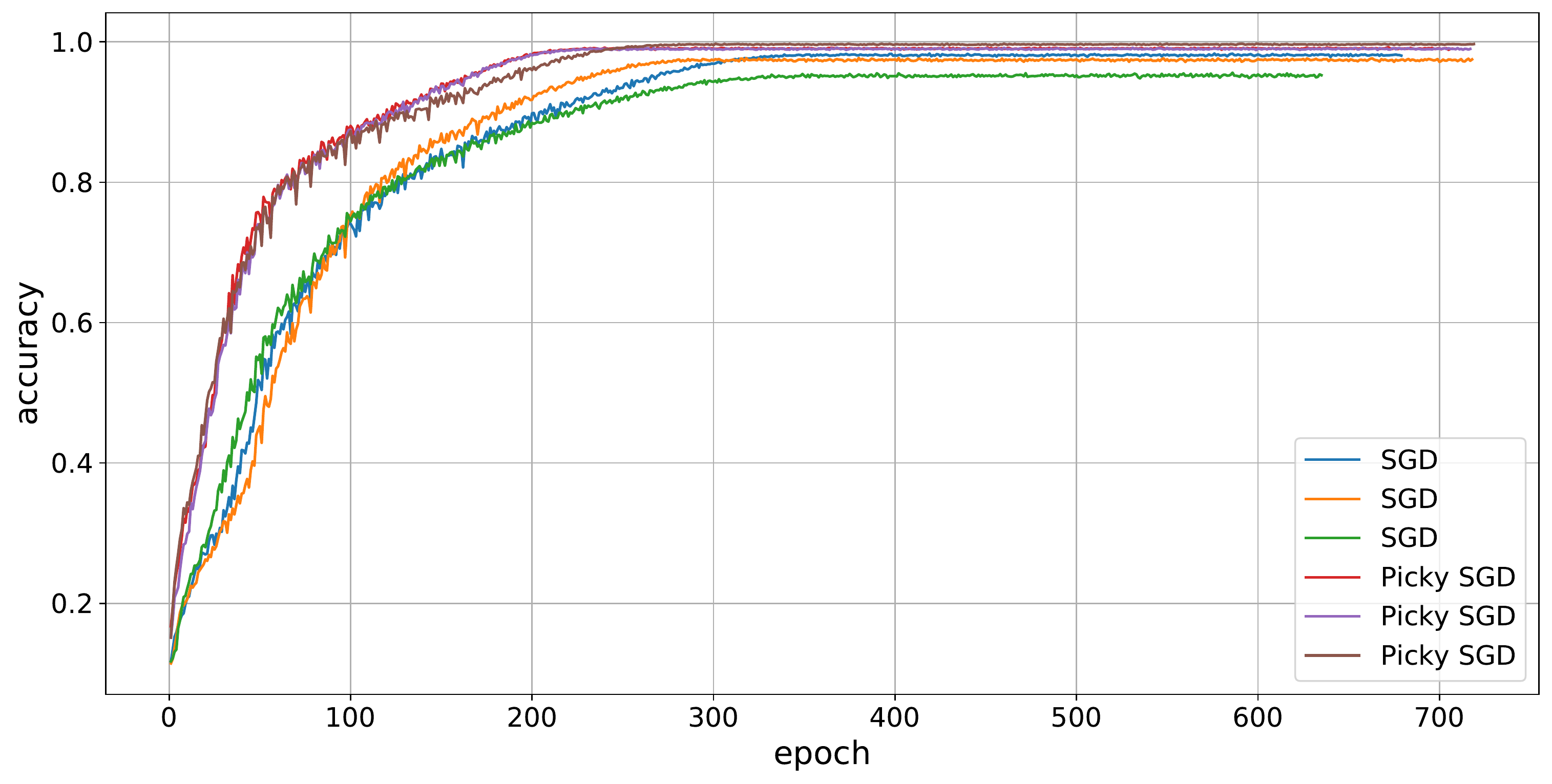}}
\vspace{-0.15in}
\caption{Train accuracy of the top three hyper-parameter configurations for the four delay schedules when trained using cosine-decay learning rate schedule.
\algo is again seen to significantly outperform SGD in terms of the final accuracy and the time it takes to achieve it.
\label{fig:algseval_cd}}
\end{figure*}

\section{Efficient implementation of~\algo}\label{S:efficient}
In this section, we present the implementation of~\algo under a typical multi-worker parameter-server setting. In this setting, the state of the model is stored on a dedicated server called the \emph{parameter server}, while the gradient computation is performed on a set of \emph{worker} machines. At each iteration, the worker queries the parameter server for its current state, computes the gradient then sends the update to the back to server. This architecture allows an efficient use of asymmetrical and computational resources and machines under varying work loads, such as the ones commonly available in large-scale cloud platforms. Note that these systems are particularly amenable to large variations in the computational time of the gradient.

Under the setting described above, the straightforward implementations of \algo are inefficient, where either the parameter server is required to keep a large portion of the history of the computation (if \cref{ln:test-update} of the algorithm is executed by the parameter server) or the workers need to query the parameter server twice during each gradient computation (in case the \cref{ln:test-update} is executed by the workers).

The overhead described above can be eliminated by executing \cref{ln:compare-dist} at the worker side and observing that after sending the appropriate update to the parameter server, the worker has all the information it needs in order to compute the next iterate without an additional query to the parameter server. See \cref{alg:sgd-with-delays-worker} for a pseudo-code describing the worker side of the proposed method (the parameter server implementation proceeds simply by receiving the gradient and updating the parameter state accordingly).

\begin{algorithm}
    \caption{\algo: worker implementation} \label{alg:sgd-with-delays-worker}
    \begin{algorithmic}[1]
        \STATE {\bf input}: 
            learning rate $\eta$, 
            target accuracy $\epsilon$.
        \STATE {\bf query parameter server} $\rightarrow x$.
        \LOOP{}
            \STATE {\bf compute} stochastic gradient $g$  such that $\E[g] = \nabla f(x)$.
            \STATE {\bf query parameter server} $\rightarrow x'$.
            \IF{$\norm{x - x'} \le \ifrac{\epsilon}{(2 \beta)}$}
            \STATE {\bf send update to parameter server} $\leftarrow g$.
            \STATE {\bf set:} $x \leftarrow x' - \eta g$.
            \ELSE 
                \STATE {\bf set:} $x \leftarrow x'$.
            \ENDIF
        \ENDLOOP
    \end{algorithmic}
\end{algorithm}


{Finally, regarding the tuning of the threshold parameter $\ifrac{\epsilon}{(2 \beta)}$, a simple strategy of selecting a good threshold we found to be effective in practice, is to log all distances $\|x-x'\|$ that occur during a typical execution and taking 99th percentile of these distances as the threshold. This ensures robustness to long delays while maintaining near-optimal performance.}

\end{document}